\documentclass[12pt]{amsart}
\usepackage[latin1]{inputenc}
\usepackage{txfonts}      
\usepackage{amssymb}
\usepackage{eucal}
\usepackage{amsmath}
\usepackage{amscd}
\usepackage{xcolor}
\usepackage{multicol}
\usepackage[all]{xy}           
\usepackage{graphicx}
\usepackage{color}
\usepackage{colordvi}
\usepackage{xspace}
\usepackage{tikz}
\usepackage{makecell}
\usepackage{appendix}
\usepackage{amsthm}

\usepackage{ifpdf}
\ifpdf
\usepackage[colorlinks,final,backref=page,hyperindex]{hyperref}
\else
\usepackage[colorlinks,final,backref=page,hyperindex,hypertex]{hyperref}
\fi

\usepackage[active]{srcltx} 

\begin{document}


\newtheorem{thm}{Theorem}[section]
\newtheorem{lem}[thm]{Lemma}
\newtheorem{cor}[thm]{Corollary}
\newtheorem{pro}[thm]{Proposition}
\theoremstyle{definition}
\newtheorem{defi}[thm]{Definition}
\newtheorem{ex}[thm]{Example}
\newtheorem{rmk}[thm]{Remark}
\newtheorem{pdef}[thm]{Proposition-Definition}
\newtheorem{condition}[thm]{Condition}

\renewcommand{\labelenumi}{{\rm(\alph{enumi})}}
\renewcommand{\theenumi}{\alph{enumi}}

\newcommand {\emptycomment}[1]{} 

\newcommand{\nc}{\newcommand}
\newcommand{\delete}[1]{}

\nc{\tred}[1]{\textcolor{red}{#1}}
\nc{\tblue}[1]{\textcolor{blue}{#1}}
\nc{\tgreen}[1]{\textcolor{green}{#1}}
\nc{\tpurple}[1]{\textcolor{purple}{#1}}
\nc{\tgray}[1]{\textcolor{gray}{#1}}
\nc{\torg}[1]{\textcolor{orange}{#1}}
\nc{\tmag}[1]{\textcolor{magenta}}
\nc{\btred}[1]{\textcolor{red}{\bf #1}}
\nc{\btblue}[1]{\textcolor{blue}{\bf #1}}
\nc{\btgreen}[1]{\textcolor{green}{\bf #1}}
\nc{\btpurple}[1]{\textcolor{purple}{\bf #1}}

\nc{\revise}[1]{\textcolor{blue}{#1}}


\nc{\tforall}{\ \ \text{for all }}
\nc{\hatot}{\,\widehat{\otimes} \,}
\nc{\complete}{completed\xspace}
\nc{\wdhat}[1]{\widehat{#1}}

\nc{\ts}{\mathfrak{p}}
\nc{\mts}{c_{(i)}\ot d_{(j)}}

\nc{\NA}{{\bf NA}}
\nc{\LA}{{\bf Lie}}
\nc{\CLA}{{\bf CLA}}

\nc{\cybe}{CYBE\xspace}
\nc{\nybe}{NYBE\xspace}
\nc{\ccybe}{CCYBE\xspace}

\nc{\preliecom}{pre-Lie commutative\xspace}
\nc{\transpreliecom}{transposed pre-Lie commutative\xspace}
\nc{\transNovikovpoisson}{transposed Novikov-Poisson\xspace}
\nc{\transdiffnovikovpoisson}{transposed differential Novikov-poisson\xspace}
\nc{\diffnovikovpoisson}{differential Novikov-Poisson\xspace}
\nc{\preliepoisson}{pre-Lie Poisson\xspace}
\nc{\transpreliepoisson}{transposed pre-Lie poisson\xspace}

\nc{\calb}{\mathcal{B}}
\nc{\rk}{\mathrm{r}}
\newcommand{\g}{\mathfrak g}
\newcommand{\h}{\mathfrak h}
\newcommand{\pf}{\noindent{$Proof$.}\ }
\newcommand{\frkg}{\mathfrak g}
\newcommand{\frkh}{\mathfrak h}
\newcommand{\Id}{\rm{Id}}
\newcommand{\gl}{\mathfrak {gl}}
\newcommand{\ad}{\mathrm{ad}}
\newcommand{\add}{\frka\frkd}
\newcommand{\frka}{\mathfrak a}
\newcommand{\frkb}{\mathfrak b}
\newcommand{\frkc}{\mathfrak c}
\newcommand{\frkd}{\mathfrak d}
\newcommand {\comment}[1]{{\marginpar{*}\scriptsize\textbf{Comments:} #1}}

\nc{\vspa}{\vspace{-.1cm}}
\nc{\vspb}{\vspace{-.2cm}}
\nc{\vspc}{\vspace{-.3cm}}
\nc{\vspd}{\vspace{-.4cm}}
\nc{\vspe}{\vspace{-.5cm}}

\nc{\yy}[1]{\textcolor{blue}{Yanyong: #1}}
\nc{\disp}[1]{\displaystyle{#1}}
\nc{\bin}[2]{ (_{\stackrel{\scs{#1}}{\scs{#2}}})}  
\nc{\binc}[2]{ \left (\!\! \begin{array}{c} \scs{#1}\\
    \scs{#2} \end{array}\!\! \right )}  
\nc{\bincc}[2]{  \left ( {\scs{#1} \atop
    \vspace{-.5cm}\scs{#2}} \right )}  
\nc{\ot}{\otimes}
\nc{\sot}{{\scriptstyle{\ot}}}
\nc{\otm}{\overline{\ot}}
\nc{\ola}[1]{\stackrel{#1}{\la}}

\nc{\scs}[1]{\scriptstyle{#1}} \nc{\mrm}[1]{{\rm #1}}

\nc{\dirlim}{\displaystyle{\lim_{\longrightarrow}}\,}
\nc{\invlim}{\displaystyle{\lim_{\longleftarrow}}\,}

\nc{\bfk}{{\bf k}} \nc{\bfone}{{\bf 1}}
\nc{\rpr}{\circ}
\nc{\dpr}{{\tiny\diamond}}
\nc{\rprpm}{{\rpr}}

\nc{\mmbox}[1]{\mbox{\ #1\ }} \nc{\ann}{\mrm{ann}}
\nc{\Aut}{\mrm{Aut}} \nc{\can}{\mrm{can}}
\nc{\twoalg}{{two-sided algebra}\xspace}
\nc{\colim}{\mrm{colim}}
\nc{\Cont}{\mrm{Cont}} \nc{\rchar}{\mrm{char}}
\nc{\cok}{\mrm{coker}} \nc{\dtf}{{R-{\rm tf}}} \nc{\dtor}{{R-{\rm
tor}}}
\renewcommand{\det}{\mrm{det}}
\nc{\depth}{{\mrm d}}
\nc{\End}{\mrm{End}} \nc{\Ext}{\mrm{Ext}}
\nc{\Fil}{\mrm{Fil}} \nc{\Frob}{\mrm{Frob}} \nc{\Gal}{\mrm{Gal}}
\nc{\GL}{\mrm{GL}} \nc{\Hom}{\mrm{Hom}} \nc{\hsr}{\mrm{H}}
\nc{\hpol}{\mrm{HP}}  \nc{\id}{\mrm{id}} \nc{\im}{\mrm{im}}

\nc{\incl}{\mrm{incl}} \nc{\length}{\mrm{length}}
\nc{\LR}{\mrm{LR}} \nc{\mchar}{\rm char} \nc{\NC}{\mrm{NC}}
\nc{\mpart}{\mrm{part}} \nc{\pl}{\mrm{PL}}
\nc{\ql}{{\QQ_\ell}} \nc{\qp}{{\QQ_p}}
\nc{\rank}{\mrm{rank}} \nc{\rba}{\rm{RBA }} \nc{\rbas}{\rm{RBAs }}
\nc{\rbpl}{\mrm{RBPL}}
\nc{\rbw}{\rm{RBW }} \nc{\rbws}{\rm{RBWs }} \nc{\rcot}{\mrm{cot}}
\nc{\rest}{\rm{controlled}\xspace}
\nc{\rdef}{\mrm{def}} \nc{\rdiv}{{\rm div}} \nc{\rtf}{{\rm tf}}
\nc{\rtor}{{\rm tor}} \nc{\res}{\mrm{res}} \nc{\SL}{\mrm{SL}}
\nc{\Spec}{\mrm{Spec}} \nc{\tor}{\mrm{tor}} \nc{\Tr}{\mrm{Tr}}
\nc{\mtr}{\mrm{sk}}

\nc{\ab}{\mathbf{Ab}} \nc{\Alg}{\mathbf{Alg}}

\nc{\BA}{{\mathbb A}} \nc{\CC}{{\mathbb C}} \nc{\DD}{{\mathbb D}}
\nc{\EE}{{\mathbb E}} \nc{\FF}{{\mathbb F}} \nc{\GG}{{\mathbb G}}
\nc{\HH}{{\mathbb H}} \nc{\LL}{{\mathbb L}} \nc{\NN}{{\mathbb N}}
\nc{\QQ}{{\mathbb Q}} \nc{\RR}{{\mathbb R}} \nc{\BS}{{\mathbb{S}}} \nc{\TT}{{\mathbb T}}
\nc{\VV}{{\mathbb V}} \nc{\ZZ}{{\mathbb Z}}


\nc{\calao}{{\mathcal A}} \nc{\cala}{{\mathcal A}}
\nc{\calc}{{\mathcal C}} \nc{\cald}{{\mathcal D}}
\nc{\cale}{{\mathcal E}} \nc{\calf}{{\mathcal F}}
\nc{\calfr}{{{\mathcal F}^{\,r}}} \nc{\calfo}{{\mathcal F}^0}
\nc{\calfro}{{\mathcal F}^{\,r,0}} \nc{\oF}{\overline{F}}
\nc{\calg}{{\mathcal G}} \nc{\calh}{{\mathcal H}}
\nc{\cali}{{\mathcal I}} \nc{\calj}{{\mathcal J}}
\nc{\call}{{\mathcal L}} \nc{\calm}{{\mathcal M}}
\nc{\caln}{{\mathcal N}} \nc{\calo}{{\mathcal O}}
\nc{\calp}{{\mathcal P}} \nc{\calq}{{\mathcal Q}} \nc{\calr}{{\mathcal R}}
\nc{\calt}{{\mathcal T}} \nc{\caltr}{{\mathcal T}^{\,r}}
\nc{\calu}{{\mathcal U}} \nc{\calv}{{\mathcal V}}
\nc{\calw}{{\mathcal W}} \nc{\calx}{{\mathcal X}}
\nc{\CA}{\mathcal{A}}

\nc{\fraka}{{\mathfrak a}} \nc{\frakB}{{\mathfrak B}}
\nc{\frakb}{{\mathfrak b}} \nc{\frakd}{{\mathfrak d}}
\nc{\oD}{\overline{D}}
\nc{\frakF}{{\mathfrak F}} \nc{\frakg}{{\mathfrak g}}
\nc{\frakm}{{\mathfrak m}} \nc{\frakM}{{\mathfrak M}}
\nc{\frakMo}{{\mathfrak M}^0} \nc{\frakp}{{\mathfrak p}}
\nc{\frakS}{{\mathfrak S}} \nc{\frakSo}{{\mathfrak S}^0}
\nc{\fraks}{{\mathfrak s}} \nc{\os}{\overline{\fraks}}
\nc{\frakT}{{\mathfrak T}}
\nc{\oT}{\overline{T}}
\nc{\frakX}{{\mathfrak X}} \nc{\frakXo}{{\mathfrak X}^0}
\nc{\frakx}{{\mathbf x}}
\nc{\frakTx}{\frakT}      
\nc{\frakTa}{\frakT^a}        
\nc{\frakTxo}{\frakTx^0}   
\nc{\caltao}{\calt^{a,0}}   
\nc{\ox}{\overline{\frakx}} \nc{\fraky}{{\mathfrak y}}
\nc{\frakz}{{\mathfrak z}} \nc{\oX}{\overline{X}}


\title[Nilpotency and Frattini theory for transposed Poisson algebras]{Nilpotency and Frattini theory for transposed Poisson algebras}

\author{Jiarou Jin}
\address{School of Mathematics, Hangzhou Normal University,
Hangzhou, 311121, China}
\email{jrjin@stu.hznu.edu.cn}

\author{Yanyong Hong (corresponding author)}
\address{School of Mathematics, Hangzhou Normal University,
Hangzhou, 311121, China}
\email{yyhong@hznu.edu.cn}

\subjclass[2010]{17B63, 17B30, 17B05}

\keywords{transposed Poisson algebra, nilpotency, Engel's theorem, Frattini subalgebra, Frattini ideal}

\begin{abstract}
We develop the theory of nilpotency and the Frattini theory for transposed Poisson algebras. The lower central series is shown to admit a simplified form, and an analogue of Engel's theorem is established: a finite-dimensional transposed Poisson algebra is nilpotent precisely when the left multiplication operators in both the associative and the Lie structures are nilpotent. Constructions of nilpotent and solvable algebras via tensor products and derivations are given. For a finite-dimensional Lie-nilpotent transposed Poisson algebra, we prove that the derived Lie subalgebra is a nilpotent ideal, which implies that the nilpotent radical coincides with the associative radical. In the framework of Frattini theory, we show that the Frattini subalgebra is always contained in the derived algebra and the Frattini ideal is associative nilpotent. When the algebra is nilpotent, all maximal subalgebras are ideals and the Frattini subalgebra equals the derived algebra. Conversely, for a Lie-nilpotent transposed Poisson algebra, if all maximal subalgebras are ideals, the algebra either is nilpotent or decomposes as a direct sum of a one-dimensional algebra generated by an idempotent and the nilpotent radical; if the Frattini subalgebra equals the derived algebra, the algebra is necessarily nilpotent. We also prove that the zero socle coincides with the nilpotent radical, and when the Frattini ideal is zero, the algebra splits into a subalgebra and its zero socle; in the Lie-nilpotent case this subalgebra is abelian as a Lie algebra.
\end{abstract}

\maketitle

\vspace{-1.2cm}

\tableofcontents

\vspace{-1.2cm}

\allowdisplaybreaks

\section{Introduction}
\vspace{-.2cm}
Poisson algebras, which originated in the 1970s from the study of Poisson geometry~\cite{poisson1,poisson2}, have become fundamental algebraic structures with deep applications in symplectic geometry, quantization theory, integrable systems, and quantum groups. A Poisson algebra is equipped with two compatible operations: a commutative associative multiplication and a Lie bracket, related by a Leibniz rule. Recently, a dual notion, termed a transposed Poisson algebra, was introduced in \cite{Bai} by exchanging the roles of the two operations in this Leibniz rule. Explicitly, a transposed Poisson algebra is a triple $(P, \cdot, [\cdot,\cdot])$ where $(P, \cdot)$ is a commutative associative algebra, $(P, [\cdot,\cdot])$ is a Lie algebra, and the compatibility condition
\begin{eqnarray}
&&\label{transposedpoissonlu}
2z \cdot [x, y] = [z \cdot x, y] + [x, z \cdot y] \quad \text{for all}\: x, y, z \in P.
\end{eqnarray}
holds. For a transposed Poisson algebra $(P, \cdot, [\cdot,\cdot])$, if $(P, \cdot)$ or $(P, [\cdot,\cdot])$ is trivial, then we say that $(P, \cdot, [\cdot,\cdot])$ is {\bf trivial}. Transposed Novikov-Poisson algebras share many key properties with Poisson algebras, such as closure under tensor products and Koszul self-duality as an operad, and are closely connected to other algebraic structures including Novikov-Poisson algebras, $3$-Lie algebras, Gel¡¯fand-Dorfman algebras and algebras of Jordan brackets \cite{Bai, S, simple-simple}. A significant link between $\frac{1}{2}$-derivations of Lie algebras and transposed Poisson algebras, established in \cite{1-2}, has led to the classification of transposed Poisson algebra structures on many important Lie algebras, such as simple, Witt, Virasoro, and various other infinite-dimensional Lie algebras \cite{YH, KK, KKS, ZSZ, KK2}. Simple transposed Poisson algebras have been studied in \cite{simple-simple} and a bialgebra theory of such algebras have been presented in \cite{LB}.

A key feature of finite-dimensional Poisson algebras is the absence of non-trivial simple structures~\cite{non-trivial simple structures}, which naturally directs attention to the study of nilpotency and solvability. Moreover, the work~\cite{Poisson algebras-non-associative algebras} presented Poisson algebras in terms of non-associative algebras via polarization and depolarization, making it natural to involve both the associative and Lie products when defining nilpotency and solvability. Accordingly, the study of nilpotent and solvable Poisson algebras has progressed considerably in recent years. In~\cite{On the solvable Poisson algebras}, Fern\'andez Ouaridi and Omirov established Engel's and Lie's theorems for Poisson algebras, and further investigated the construction of nilpotent and solvable Poisson algebras via tensor products and generalized Jacobians, as well as study Poisson algebras on maximal solvable extensions of nilpotent Lie algebras. In addition, solvability and constructions of generalized Poisson algebras were investigated in~\cite{generalized Poisson}. These developments make it very natural to investigate solvability and nilpotency for transposed Poisson algebras.

The investigation of nilpotency and solvability in algebraic structures is deeply intertwined with Frattini theory. Originating in group theory, the Frattini subgroup captures the notion of non-generators and provides a powerful nilpotency criterion: a group is nilpotent if and only if its Frattini subgroup contains its derived subgroup \cite{ Frattini group}. This theory has been successfully extended to numerous algebraic settings, including Lie algebras~\cite{Frattini of Lie algebras, Lie2, Lie3}, $n$-Lie algebras~\cite{Frattini n-Lie}, associative algebras~\cite{Frattini asso}, general non-associative algebras~\cite{Frattini non-asso, Frattini of algebras}, Leibniz algebras, bicommutative algebras, assosymmetric algebras, Novikov algebras~\cite{Frattini Novikov}, restricted Lie algebras~\cite{Frattini restricted}, restricted Lie Superalgebras~\cite{Frattini restrictedsuper}, evolution algebras~\cite{Frattini evolution}, and most recently by Towers extended to Poisson algebras~\cite{Frattini of dialgebras}. Therefore, it is natural to consider to develop a Frattini theory for transposed Poisson algebras.\par

In this paper, we address these questions and initiate the systematic study of the nilpotency structure and Frattini theory of transposed Poisson algebras. Our main contributions are as follows. We first show that the lower central series of a transposed Poisson algebra admits a particularly convenient form, and we identify several natural ideals arising from the compatibility condition. We establish an analogue of Engel¡¯s theorem, proving that a finite-dimensional transposed Poisson algebra is nilpotent if and only if the left multiplication operators in both the associative and Lie structures are nilpotent. We further provide constructions of nilpotent and solvable transposed Poisson algebras via tensor products and from commutative associative algebras with derivations. A key structural result is that for a finite-dimensional Lie-nilpotent transposed Poisson algebra, its derived Lie subalgebra $[P,P]$ is a nilpotent ideal, which forces its nilpotent radical to coincide with the associative nilpotent radical.

Turning to Frattini theory, we prove that the Frattini subalgebra is always contained in the derived algebra, and the Frattini ideal is always associative nilpotent. We also give some bounds for them(see Proposition~\ref{F(P)lower bounds} and Corollary~\ref{F(P)lower bounds-Nil}). For nilpotent algebras, we obtain a precise analogue of the group-theoretic and Lie-algebraic results: all maximal subalgebras are ideals, and the Frattini subalgebra equals the derived algebra. We then explore the converse direction for Lie-nilpotent algebras, showing that if all maximal subalgebras are ideals, the algebra decomposes as a direct sum of a one-dimensional algebra generated by an idempotent and the nilpotent radical; if the Frattini subalgebra equals the derived algebra, the algebra must be nilpotent. Finally, we relate these concepts to the socle, proving that the zero socle coincides with the nilpotent radical, and we analyze the structure of transposed Poisson algebras with a vanishing Frattini ideal, showing they decompose into a subalgebra and its zero socle, with the subalgebra being abelian as a Lie algebra in the Lie-nilpotent case.

The paper is organized as follows. In Section 2, we first introduce the definitions and fundamental properties of nilpotent and solvable transposed Poisson algebras, along with several associated ideals and subalgebras. Moreover, we explore several properties of idempotents in transposed Poisson algebras.
Section 3 is devoted to an analogue of Engel's theorem for transposed Poisson algebras, along with some constructions of nilpotent and solvable algebras. We then prove that in a finite-dimensional Lie-nilpotent transposed Poisson algebra, the derived Lie subalgebra is always a nilpotent ideal, which allows us to describe the nilpotent radical in detail. In Section 4, we develop the Frattini theory for transposed Poisson algebras. We introduce the Frattini subalgebra and Frattini ideal, establish their fundamental properties, and study the interplay among nilpotency, the condition that all maximal subalgebras are ideals, and the equality between the Frattini subalgebra and the derived subalgebra. Finally, we analyze complements of the zero socle when the Frattini ideal vanishes.

\noindent{\bf Notations.} Throughout this paper, let  $\bf k$ be a field whose characteristic is not equal to $2$. All tensors over ${\bf k}$ are denoted by $\otimes$.
Let $\dot{+}$ denote a direct sum of the underlying vector spaces and $\oplus$ denote a direct sum of the underlying algebras.
Denote by $\mathbb{C}$, $\mathbb{Z}$, $\mathbb{N}$ and $\mathbb{Z}_+$ the sets of complex numbers, integer numbers, non-negative integers and positive integers respectively.

\section{Some related properties of transposed Poisson algebras}
This section is devoted to several distinguished ideals, as well as the fundamental properties of nilpotency and idempotent elements in transposed Poisson algebras.
\subsection{Basic notions and properties}
Recall that a {\bf dialgebra} $(P,\cdot, [\cdot,\cdot])$ is a vector space $P$ with two binary operations $\cdot$ and $[\cdot,\cdot]$. Note that associative dialgebras were originally defined by Loday in the 1990s~\cite{assodialgebradefi}. A \textbf{subalgebra} of a dialgebra $(P,\cdot, [\cdot,\cdot])$ is a linear subspace \(A\) satisfying
$A \cdot A + [A,A] \subseteq A$. A subalgebra \(I\) of $(P,\cdot, [\cdot,\cdot])$ is an \textbf{ideal} if
$I \cdot P + P \cdot I + [I,P] + [P,I] \subseteq I$.
A \textbf{abelian subalgebra} (resp. \textbf{abelian ideal}) of $(P,\cdot, [\cdot,\cdot])$ is a subalgebra (resp. ideal) \(Z\) such that $Z \cdot Z + [Z,Z] = 0$.

\begin{defi}~\cite{dialgebra nilsol}
Let $(P,\cdot,[\cdot,\cdot])$ be a dialgebra and $A$ be a subalgebra of $P$. The \textbf{derived series} of $A$ is defined by
\begin{eqnarray*}
&&A^{(0)} := A, \qquad A^{(n+1)} := A^{(n)}\!\cdot\! A^{(n)} + [A^{(n)}, A^{(n)}]\; \quad\text{for all}\:n\geq 0.
\end{eqnarray*}
The dialgebra $(A,\cdot,[\cdot,\cdot])$ is called \textbf{solvable} if there exists $m \ge 0$ such that $A^{(m)} = 0$.\par
The \textbf{lower central series} of $A$ is the sequence of $A$ given by
\begin{eqnarray*}
A^{0} := A, \qquad
A^{n+1} := \sum_{i=0}^{n} \bigl( A^{i}\!\cdot\! A^{n-i} + [A^{i}, A^{n-i}] \bigr) \quad\text{for all}\:n\geq 0.
\end{eqnarray*}
The dialgebra $(A,\cdot,[\cdot,\cdot])$ is called \textbf{nilpotent} if there exists $m \ge 0$ such that $A^{m} = 0$.
\end{defi}

\begin{defi}~\cite{Frattini of dialgebras}
Let $(P,\cdot,[\cdot,\cdot])$ be an $n$-dimensional dialgebra. The dialgebra $P$ is called \textbf{supersolvable} if there is a flag
\begin{eqnarray*}
&&0 = P_0 \subset P_1 \subset \ldots \subset P_n = P,
\end{eqnarray*}
where $P_i$ is an $i$-dimensional ideal of $P$ for $1 \leq i \leq n$.
\end{defi}

Note that a transposed Poisson algebra is a dialgebra.
Given a transposed Poisson algebra $(P,\cdot,[\cdot,\cdot])$. We use the notation $P_A$ and $P_L$ to denote the associative algebra structure $(P,\cdot)$ and the Lie algebra structure $(P,[\cdot,\cdot])$ respectively. Furthermore, for each $x\in P$, define $P_x$ and $Q_x\in\operatorname{End}(A)$ by $P_x(y)=x\cdot y$ and $Q_x(y)=[x,y]$ respectively for all $y\in P$.

It has been proved that the nilpotency implies the solvability for a dialgebra~\cite{On the solvable Poisson algebras}. The following proposition shows that nilpotency implies supersolvability for a finite-dimensional transposed Poisson algebra over an algebraically closed field of characteristic zero.

\begin{pro}\label{nil-supersol}
Let $(P,\cdot,[\cdot,\cdot])$ be a finite-dimensional nilpotent transposed Poisson algebra over an algebraically closed field of characteristic zero. Then $P$ is supersolvable.
\end{pro}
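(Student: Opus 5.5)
We build the flag of ideals by induction on $\dim P$. The key step is to show that a nonzero finite-dimensional nilpotent dialgebra has a one-dimensional ideal inside its annihilator
\[
Z(P):=\{z\in P \mid z\cdot P=0=[z,P]\}.
\]
Note that $Z(P)$ is an ideal, since $P\cdot Z(P)=0$ by commutativity and $[P,Z(P)]=0$ by antisymmetry. Moreover, every subspace of $Z(P)$ is automatically an ideal of $P$, because all products with $P$ vanish on it.

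First, $Z(P)\neq 0$ whenever $P\neq 0$. Let $m$ be minimal with $P^m=0$, so $m\ge 1$ and $P^{m-1}\neq 0$. We claim $P^{m-1}\subseteq Z(P)$. From the definition of the lower central series, $P^{m-1}\cdot P^0+[P^{m-1},P^0]\subseteq P^{m}=0$ (the term $i=m-1$). Since $P^0=P$, this gives the claim. So we may choose a nonzero $z\in P^{m-1}$ and set $P_1=\mathbf{k}z$, which is a one-dimensional ideal.

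Next, pass to the quotient $\bar P=P/P_1$. It is again a transposed Poisson algebra, since the identities descend, and its lower central series is the image of that of $P$, so $\bar P$ is nilpotent. By induction on dimension, $\bar P$ has a flag of ideals $0=\bar P_0\subset\bar P_1\subset\cdots\subset\bar P_{n-1}=\bar P$ with $\dim\bar P_i=i$. Taking preimages $P_{i+1}=\pi^{-1}(\bar P_i)$ under the projection $\pi\colon P\to\bar P$ gives ideals of $P$, because preimages of ideals under a surjective homomorphism are ideals. These satisfy $\dim P_{i+1}=i+1$, and together with $P_0=0$ and $P_1$ they form the required flag.

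I expect no real obstacle here. The only points to check are that $P^{m-1}$ is annihilated by $P$ on both sides, which follows from commutativity of $\cdot$ and antisymmetry of the bracket, and that nilpotency passes to quotients. Algebraic closure and characteristic zero are not actually needed in this argument: nilpotency gives a central element directly, so no eigenvector argument is required.
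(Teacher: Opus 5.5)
Your proof is correct and is essentially the paper's argument. The paper likewise takes a nonzero $v$ in the last nonzero term of the lower central series, uses that $P\cdot v+[P,v]$ lies in the next term, which is $0$, so that ${\bf k}v$ is a one-dimensional ideal, and then inducts on $\dim P$ by passing to the quotient. The paper also intersects that term with a Lie-ideal flag $V_j$ coming from Lie's theorem, but this plays no role in the argument, so your remark that algebraic closure and characteristic zero are not needed is accurate.
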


\begin{proof}
We first claim that there exists a non-zero common eigenvector $v$ of $P_x$ and $Q_x$ for all $x\in P$. Since $P$ is nilpotent, $P_L$ is solvable. Then we can choose a chain of Lie ideals as follows:
\begin{eqnarray*}
0=V_0\subset V_1\subset \cdots\subset V_n=P_L,
\end{eqnarray*}
 where $\dim(V_i)=i$ and $V_i$ is an ideal of $P_L$ for each $i$. Consider the lower central series of $P$. There exists a nonnegative integer $m$ such that $P^{m}\neq0$ but $P^{m+1}=0$. Choose an index $j$ for which $\dim(P^{m}\cap V_j)=1$. For any non-zero $v\in P^{m}\cap V_j$, we have $Q_x(v)$, $P_x(v)\in P\cdot P^{m}+[P, P^{m}]\subseteq P^{m+1}=0$ for all $x\in P$. Thus $v$ is a common eigenvector of $P_x$ and $Q_x$ for all $x\in P$, proving the claim. Hence, the supersolvability of $P$ is clear by induction on $\dim(P)$. This completes the proof.
\delete{Now we prove supersolvability by induction on $\dim(P)$. If $\dim(P)=1$ the statement is trivial. Assume $\dim(P)>1$ and let $v$ be a common eigenvector as above. Set $I={\bf k}v$, then $I$ is a $1$-dimensional ideal of $P$. By the induction hypothesis, the quotient $P/I$ is supersolvable, so we can take a supersolvable flag
\begin{eqnarray*}
&&0=\bar{U}_0\subseteq\bar{U}_1\subseteq\cdots\subseteq\bar{U}_{n-1}=P/I
\end{eqnarray*}
with $\dim(\bar{U}_i)=i$. Then
\begin{eqnarray*}
&&0\subseteq I\subseteq\pi^{-1}(\bar{U}_1)\subseteq\cdots\subseteq\pi^{-1}(\bar{U}_{n-1})=P.
\end{eqnarray*}
is a flag of ideals of $P$ with dimensions $0,1,\dots,n$, so $P$ is supersolvable.}
\end{proof}
\delete{\begin{rmk}
However, the converse of Proposition~\ref{nil-supersol} is not necessarily true.
\end{rmk}

\begin{ex}\label{ex2}
Let $P$ be a $2$-dimensional transposed Poisson algebra over $\mathbb{C}$ with basis $\{e_1, e_2\}$, where the non-zero commutative associative product and Lie bracket are given by
\begin{eqnarray*}
 && e_1 \cdot e_1 = e_2,\quad [e_1, e_2] = e_2
\end{eqnarray*}
respectively. According to the classification in \cite{Bai}, this defines a transposed Poisson algebra.\par
A direct computation shows that there is a flag $0=P_0\subseteq P_1=\mathbb{C}e_2\subseteq P_2=P$, where $P_1$ is an ideal, but $P$ itself is not nilpotent.
\end{ex}}

The lower central series of transposed Poisson algebras can be given in a more concise form.
\begin{pro}\label{nildefi1}
Let $(P,\cdot,[\cdot,\cdot])$ be a transposed Poisson algebra and $A$ be a subalgebra of $P$. Then we have $A^{k+1}=A\cdot A^{k}+[A,A^{k}]$ for each $k\geq 0$.
\end{pro}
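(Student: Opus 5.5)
The plan is to compare the lower central series with the ``one-sided'' series
\[
B^0:=A,\qquad B^{k+1}:=A\cdot B^k+[A,B^k]\quad (k\geq 0).
\]
We must show $A^k=B^k$ for all $k$. The inclusion $B^k\subseteq A^k$ is immediate by induction on $k$, since $A\cdot B^k+[A,B^k]$ is one of the summands ($i=0$) in the definition of $A^{k+1}$. A similar induction shows that $B^{k+1}\subseteq B^k$: we have $B^1\subseteq A=B^0$, and $B^{k}\subseteq B^{k-1}$ gives $A\cdot B^{k}+[A,B^{k}]\subseteq A\cdot B^{k-1}+[A,B^{k-1}]$.

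The main step, which I expect to be the only real work, is the key lemma
\[
B^i\cdot B^j\subseteq B^{i+j+1}\quad\text{and}\quad [B^i,B^j]\subseteq B^{i+j+1}\qquad\text{for all } i,j\geq 0.
\]
Once we have it, $A^k\subseteq B^k$ follows by strong induction on $k$: assuming $A^i=B^i$ for all $i\leq k$, every summand $A^i\cdot A^{k-i}+[A^i,A^{k-i}]$ of $A^{k+1}$ lies in $B^{k+1}$.

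I will prove both inclusions of the lemma simultaneously, by induction on $i$, for all $j$ at once. The case $i=0$ is exactly the definition of $B^{j+1}$. For $i\geq1$, the space $B^i$ is spanned by elements $a\cdot u$ and $[a,u]$ with $a\in A$ and $u\in B^{i-1}$; fix also $v\in B^j$. The induction hypothesis applies to $u$ paired with any element of $B^{j'}$, for every $j'$. There are four cases.
\begin{enumerate}
\item By associativity, $(a\cdot u)\cdot v=a\cdot(u\cdot v)$. Since $u\cdot v\in B^{i+j}$, this lies in $A\cdot B^{i+j}\subseteq B^{i+j+1}$.
\item Applying \eqref{transposedpoissonlu} with $z=v$ gives $2v\cdot[a,u]=[v\cdot a,u]+[a,v\cdot u]$. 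Here $[a,v\cdot u]\in[A,B^{i+j}]$. Also $v\cdot a\in B^{j+1}$, so the induction hypothesis with $j'=j+1$ puts $[v\cdot a,u]=-[u,v\cdot a]$ in $B^{i+j+1}$. Since $\mathrm{char}\,{\bf k}\neq 2$, we can divide by $2$.
\item Applying \eqref{transposedpoissonlu} with $z=a$, $x=u$, $y=v$ gives $[a\cdot u,v]=2a\cdot[u,v]-[u,a\cdot v]$. The first term lies in $A\cdot B^{i+j}$. For the second, $a\cdot v\in B^{j+1}$, so the hypothesis with $j'=j+1$ applies.
\item By the Jacobi identity, $[[a,u],v]=[a,[u,v]]-[u,[a,v]]$, which is handled exactly as in case (c).
\end{enumerate}
Commutativity of $\cdot$ and antisymmetry of the bracket take care of the order of the factors. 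This closes the induction.

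The point requiring care is that the induction must run on $i$ uniformly in $j$, because the compatibility condition shifts the index $j$ to $j+1$. The hypothesis $\mathrm{char}\,{\bf k}\neq2$ is used in case (b) to divide by $2$.
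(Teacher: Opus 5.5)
Your proof is correct and takes essentially the same approach as the paper. Both arguments move a factor of $A$ from the left argument to the right one using associativity, the compatibility condition \eqref{transposedpoissonlu} (dividing by $2$) and the Jacobi identity, so the left index drops by one while the right index rises by one. The only difference is packaging: the paper runs a nested induction directly on $A^k$ (outer on $k$, inner on the left index, with the outer hypothesis supplying $A^{i+1}=A\cdot A^i+[A,A^i]$), whereas your auxiliary series $B^k$ has this form by definition, so a single induction on $i$, uniform in $j$, suffices and the comparison $A^k=B^k$ is done separately.
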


\begin{proof}
Since $A \cdot A^{k}+[A,A^{k}]\subseteq A^{k+1}$ is obvious, we only need to prove that
\begin{eqnarray*}
A^{k+1}\subseteq A^{k}\cdot A+[A^{k},A]\qquad\text{for all }k\geq 0.
\end{eqnarray*}
We proceed by induction on $k$.
If $k=0$, the statement is obvious.
Assume that $A^{n+1}\subseteq A^{n}\cdot A+[A^{n},A]$ holds for all $0\leq n\leq k-1$.
Then we consider $A^{k+1}$. It suffices to show that
\begin{eqnarray*}
  &&A^{i}\cdot A^{k-i}+[A^{i},A^{k-i}]\subseteq A^{k}\cdot A+[A^{k},A]\qquad\text{for all }0\leq i\leq k.
\end{eqnarray*}

We prove this by a second induction on $i$.
For $i=0$ the inclusion is clear.
Suppose that $A^{j}\cdot A^{k-j}+[A^{j},A^{k-j}]\subseteq A^{k}\cdot A+[A^{k},A]$ for all $0\leq j\leq i$, and consider $A^{i+1}\cdot A^{k-i-1}$ and $[A^{i+1},A^{k-i-1}]$.
Since $i+1\leq k$, we may apply the outer inductive hypothesis to obtain
\begin{eqnarray*}
A^{i+1}\cdot A^{k-i-1}&=&\big([A,A^{i}]+A\cdot A^{i}\big) \cdot A^{k-i-1}\\
&\subseteq& [A,A^{i}]\cdot A^{k-i-1}+A\cdot (A^{i}\cdot A^{k-i-1})\\
&\subseteq& [A\cdot A^{k-i-1},A^i]+[A,A^i\cdot A^{k-i-1}]+A\cdot A^k\\
&\subseteq& [A^{k-i},A^i]+[A,A^k]+A\cdot A^k\\
&\subseteq& [A,A^k]+A\cdot A^k, 
\end{eqnarray*}
and similarly
\begin{eqnarray*}
[A^{i+1},A^{k-i-1}]&=&\big[[A,A^{i}]+A\cdot A^{i},A^{k-i-1}\big]\\
&\subseteq&\big[[A,A^{k-i-1}],A^i\big]+\big[[A^i,A^{k-i-1}],A]\big]+A\cdot [A^i,A^{k-i-1}]+[A^i,A\cdot A^{k-i-1}] \\
&\subseteq& [A^{k-i},A^i]+[A^k,A]+A\cdot A^{k}\\
&\subseteq& [A,A^k]+A\cdot A^k. 
\end{eqnarray*}
This completes the inner induction, and consequently the outer induction. This completes the proof.
\end{proof}

\delete{In the following propositions, we will identify some ideals of a transposed Possion algebra.}

\begin{pro}\label{idealpro1}
Let  \( (P, \cdot, [\cdot,\cdot]) \) be a transposed Poisson algebra and $I$ be an ideal of $P$. Then $I^k$ and $I^{(k)}$ are ideals of $P$ for each $k\geq 0$.
\end{pro}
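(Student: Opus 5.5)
The plan is to prove both statements by induction on $k$, treating the lower central series and the derived series in parallel. In both cases the inductive step reduces to a single lemma: \emph{if $J,K$ are ideals of $P$, then $J\cdot K+[J,K]$ is again an ideal of $P$.}

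For $I^k$, Proposition~\ref{nildefi1} applied to the subalgebra $I$ gives $I^{k+1}=I\cdot I^k+[I,I^k]$. So if $I^k$ is an ideal of $P$ (the case $k=0$ is the hypothesis), the lemma with $J=I$ and $K=I^k$ shows that $I^{k+1}$ is an ideal. For $I^{(k)}$, the definition gives $I^{(k+1)}=I^{(k)}\cdot I^{(k)}+[I^{(k)},I^{(k)}]$, so the lemma with $J=K=I^{(k)}$ gives the inductive step directly. Commutativity of $\cdot$ and skew-symmetry of $[\cdot,\cdot]$ mean it suffices to check closure under $P\cdot(-)$ and $[P,-]$.

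To prove the lemma, take $z\in P$, $x\in J$ and $y\in K$, and treat the four products separately.
\begin{enumerate}
\item Associativity and commutativity give $z\cdot(x\cdot y)=(z\cdot x)\cdot y\in J\cdot K$.
\item By the compatibility condition \eqref{transposedpoissonlu}, $2z\cdot[x,y]=[z\cdot x,y]+[x,z\cdot y]\in [J,K]$. Since $\operatorname{char}\bfk\neq 2$, we may divide by $2$ to get $z\cdot[x,y]\in[J,K]$.
\item The Jacobi identity gives $[z,[x,y]]=[[z,x],y]+[x,[z,y]]\in[J,K]$.
\item For $[z,x\cdot y]$, there is no Leibniz rule for the bracket over the product in a transposed Poisson algebra, so I rewrite \eqref{transposedpoissonlu} with the roles of the variables permuted: $2x\cdot[z,y]=[x\cdot z,y]+[z,x\cdot y]$. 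This yields
\[
[z,x\cdot y]=2x\cdot[z,y]-[x\cdot z,y].
\]
Here $x\cdot[z,y]\in J\cdot K$ because $K$ is an ideal, and $[x\cdot z,y]\in[J,K]$ because $J$ is an ideal. Hence $[z,x\cdot y]\in J\cdot K+[J,K]$.
\end{enumerate}

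I expect the fourth product to be the only real obstacle. The trick is to read the compatibility identity as an expression for a bracket against a product, rather than the other way around. Everything else follows from the standard axioms together with the inductive hypothesis that the relevant terms of the series are ideals.
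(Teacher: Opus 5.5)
Your proof is correct and is essentially the paper's argument: induction on $k$ using $I^{k+1}=I\cdot I^k+[I,I^k]$ from Proposition~\ref{nildefi1}, with closure under multiplication by $P$ checked via associativity, the Jacobi identity, and the compatibility condition \eqref{transposedpoissonlu} rewritten as $[z,x\cdot y]=2x\cdot[z,y]-[x\cdot z,y]$. Stating the computation as a lemma about $J\cdot K+[J,K]$ for two ideals $J,K$ just packages more cleanly the same expansions that the paper carries out directly for $I^{k+1}$ and $I^{(k+1)}$.
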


\begin{proof}
We proceed by induction on $k$. It is clear when $k=0$. Assume that for every $t$ with $0 \le t \le k$, the subspaces $I^{t}$ and $I^{(t)}$ are ideals of $P$. Now we consider $I^{k+1}$ and $I^{(k+1)}$. Using the induction hypothesis and Proposition~\ref{nildefi1}, we obtain
\begin{eqnarray*}
[I^{k+1},P] + I^{k+1}\cdot P &=& \big[[I^{k},I] + I^{k}\cdot I,\; P\big] + \big([I^{k},I] + I^{k}\cdot I\big)\cdot P \\
&\subseteq& \big[[I^{k},I],P\big] + I\cdot[I^{k},P] + [I^{k},I\cdot P] + [I^{k}\cdot P,I] + I^{k}\cdot (I\cdot P) \\ 
&\subseteq& [I^{k},I] + I\cdot I^{k} \\
&=& I^{k+1},
\end{eqnarray*}
and similarly
\begin{eqnarray*}
[I^{(k+1)},P] + I^{(k+1)}\cdot P &=& \big[[I^{(k)},I^{(k)}] + I^{(k)}\cdot I^{(k)},\; P\big] + \big([I^{(k)},I^{(k)}] + I^{(k)}\cdot I^{(k)}\big)\cdot  P \\
&\subseteq& [I^{(k)},I^{(k)}] + I^{(k)}\cdot I^{(k)} \\
&=& I^{(k+1)}.
\end{eqnarray*}
Therefore, $I^k$ and $I^{(k)}$ are ideals of $P$ for each $k\geq 0$.
\end{proof}

\begin{pro}\label{idealpro2}
Let $(P,\cdot,[\cdot,\cdot])$ be a transposed Poisson algebra and $B,C$ be two ideals of $P$. Then $[B,C]$ is also an ideal of $P$.
\end{pro}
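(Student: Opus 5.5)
We must show $[[B,C],P]\subseteq [B,C]$ and $[B,C]\cdot P\subseteq [B,C]$. Since $[B,C]$ is a subspace, it then follows automatically that it is closed under both products, because $[B,C]\subseteq B\cap C\subseteq P$. So it is a subalgebra, and hence an ideal. Commutativity of $\cdot$ and antisymmetry of the bracket take care of the left and right versions simultaneously.

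\emph{Lie part.} For $b\in B$, $c\in C$ and $x\in P$, the Jacobi identity gives
\begin{eqnarray*}
[[b,c],x]=[[b,x],c]+[b,[c,x]].
\end{eqnarray*}
Here $[b,x]\in B$ and $[c,x]\in C$, because $B$ and $C$ are ideals. Hence $[[B,C],P]\subseteq [B,C]$.

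\emph{Associative part.} This is where the compatibility condition \eqref{transposedpoissonlu} enters, and it is the only step requiring care. Take $z=x\in P$ and the pair $b,c$. Then
\begin{eqnarray*}
2x\cdot[b,c]=[x\cdot b,c]+[b,x\cdot c].
\end{eqnarray*}
Since $B$ and $C$ are ideals, $x\cdot b\in B$ and $x\cdot c\in C$, so the right-hand side lies in $[B,C]$. Because $\mathrm{char}\,{\bf k}\neq 2$, we may divide by $2$ to get $x\cdot[b,c]\in[B,C]$. By linearity this gives $P\cdot[B,C]\subseteq[B,C]$, and by commutativity also $[B,C]\cdot P\subseteq[B,C]$.

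I expect no real obstacle. The only point to watch is that the factor $2$ in \eqref{transposedpoissonlu} must be inverted, which is exactly why the standing hypothesis $\mathrm{char}\,{\bf k}\neq 2$ is needed.
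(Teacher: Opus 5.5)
Your proof is correct and is essentially the paper's argument: the Jacobi identity gives $\big[[B,C],P\big]\subseteq[B,C]$, and the compatibility condition \eqref{transposedpoissonlu} with $z\in P$ gives $P\cdot[B,C]\subseteq[B,C]$. You also state explicitly the division by $2$ (using $\mathrm{char}\,{\bf k}\neq 2$), which the paper uses without comment.
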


\begin{proof}
For any $x\in B$ and $y\in C$, by Eq.~\eqref{transposedpoissonlu}, we obtain
\begin{eqnarray*}
\big[[x,y],P\big]+[x,y] \cdot P&\subseteq& \big[[x,P],y\big]+\big[[y,P],x\big]+[x\cdot P,y]+[x,y\cdot  P]\\
&\subseteq&[B,C].
\end{eqnarray*}
This completes the proof.
\end{proof}

\begin{rmk}
If $B,C$ are two ideals of a transposed Poisson algebra $P$, then the subspace $B\cdot C$ is not necessarily an ideal of $P$. For example, by \cite{3-dimensional transposed}, $(P=\mathbb{C}e_1\oplus \mathbb{C}e_2\oplus\mathbb{C}e_3,\cdot,[\cdot,\cdot])$ is a transposed Poisson algebra with the non-zero commutative associative products and the non-zero Lie brackets given by
\begin{eqnarray*}
 && e_3\cdot e_3=e_1,\quad [e_1, e_3] = e_1+e_2,\quad [e_2,e_3]=e_2.
\end{eqnarray*}
However, it is easy to see that $P\cdot P=\mathbb{C}e_1$ is not an ideal of $P$.
\end{rmk}

\delete{
\begin{ex}
Let $(P,\cdot,[\cdot,\cdot])$ be a $3$-dimensional vector space over $\mathbb{C}$ with basis $\{e_1, e_2, e_3\}$, where the non-zero commutative associative product and Lie bracket are given by
\begin{eqnarray*}
 && e_3\cdot e_3=e_1,\quad [e_1, e_3] = e_1+e_2,\quad [e_2,e_3]=e_2
\end{eqnarray*}
respectively. According to the classification in \cite{3-dimensional transposed}, this defines a transposed Poisson algebra. However, a direct computation shows that $P\cdot P=\mathbb{C}e_1$ is not an ideal.
\end{ex}}

Next, we recall the definitions of annihilators and normalizers of subalgebras of a dialgebra.

\begin{defi}
Let $B$ be a subalgebra of a dialgebra   \( (P, \cdot, [\cdot,\cdot]) \).

\cite{Frattini of dialgebras}~The \textbf{annihilator} of $B$ in $P$, denoted $\operatorname{Ann}_{P}(B)$, is defined by $\operatorname{Ann}_{P}(B) = \{\, x \in P \mid x \cdot B + B\cdot x + [x, B] + [B, x] = 0\}$.

\cite{Abelian subalgebra of Poisson}~The \textbf{normalizer} of $B$ in $P$ is the set $ N(B) = \{x \in P \mid B\cdot x + x\cdot B+ [B, x] + [x,B]\subseteq B\}$.
\end{defi}

\delete{
\begin{defi}~\cite{Abelian subalgebra of Poisson}
If $B$ is a subalgebra of a dialgebra   \( (P, \cdot, [\cdot,\cdot]) \) , the \textbf{normalizer} of $B$ is the set $ N(B) = \{x \in P \mid P\cdot x + x\cdot P+ [P, x] + [x,P]\}$.
\end{defi}}
\begin{rmk}\label{Ann_L}
For a dialgebra \((P, \cdot, [\cdot,\cdot])\), if $B$ is an ideal, it is straightforward to verify that $\operatorname{Ann}_{P}(B)$ is also an ideal of $P$ \cite{Frattini of dialgebras}. If \((P, \cdot, [\cdot,\cdot])\) is a transposed Poisson algebra, we denote by $\operatorname{Ann}_{L}(B)$ (resp.\ $\operatorname{Ann}_{A}(B)$) the annihilator of a subalgebra $B$ in the $P_L$ (resp.\ in the $P_A$). Similar to Engel subalgebras of Lie algebras, for each $a\in P$, we define $E_P^A(a)=\{\, x \in P \mid P_a^k(x)= 0, k\gg0\}$ and $E_P^L(a)=\{\, x \in P \mid Q_a^k(x)= 0, k\gg0\}$.
\end{rmk}
\vspb
\begin{pro}\label{idealpro3}
Let $(P,\cdot, [\cdot,\cdot])$ be a transposed Poisson algebra and $B$ be subalgebra of $P$. Then $\operatorname{Ann}_L(B)$ and $E_P^A(a)$ are ideals of $P$ for all $a\in P$.
\end{pro}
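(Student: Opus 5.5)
The plan is to verify the ideal conditions directly from the compatibility identity \eqref{transposedpoissonlu}, together with commutativity and associativity of $\cdot$ and the Jacobi identity. Since $\cdot$ is commutative and $[\cdot,\cdot]$ is anticommutative, it suffices in each case to show that the subspace is closed under left multiplication $P_z$ and under $Q_z$ for every $z\in P$. Closure under the operations inside the subspace then follows automatically.

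\emph{$\operatorname{Ann}_L(B)$.} Here $\operatorname{Ann}_L(B)=\{x\in P\mid [x,B]=0\}$. For the Jacobi step, take $x\in\operatorname{Ann}_L(B)$, $p\in P$ and $b\in B$, and write
\begin{eqnarray*}
[[x,p],b]=[x,[p,b]]+[[x,b],p].
\end{eqnarray*}
The second term vanishes. The first term vanishes only if $[p,b]\in B$. So this step needs $[P,B]\subseteq B$, i.e.\ $B$ must be an ideal, as in Remark~\ref{Ann_L}. Even for plain Lie algebras, the centralizer of a mere subalgebra is generally not an ideal, so I would carry out the proof for an ideal $B$ and expect the statement to be read that way. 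For the associative step, \eqref{transposedpoissonlu} gives
\begin{eqnarray*}
[z\cdot x,b]=2z\cdot[x,b]-[x,z\cdot b].
\end{eqnarray*}
Both terms vanish: the first because $[x,b]=0$, the second because $z\cdot b\in B$.

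\emph{$E_P^A(a)$, closure under $\cdot$.} This is immediate. If $P_a^k(x)=0$, then by associativity and commutativity
\begin{eqnarray*}
P_a^k(y\cdot x)=y\cdot P_a^k(x)=0.
\end{eqnarray*}

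\emph{$E_P^A(a)$, closure under $[\cdot,\cdot]$.} This is the real work. Rewrite \eqref{transposedpoissonlu} as the operator identity $2P_aQ_y=Q_{a\cdot y}+Q_yP_a$. Iterating it, I will prove by induction on $n$ the binomial formula
\begin{eqnarray*}
2^n a^n\cdot[y,x]=\sum_{i=0}^n\binom{n}{i}\,[P_a^i(y),P_a^{n-i}(x)].
\end{eqnarray*}
The inductive step applies $P_a$ to each bracket using \eqref{transposedpoissonlu} and then uses Pascal's rule. Suppose $P_a^k(x)=0$. Then all terms with $n-i\ge k$ vanish, and only those with $i>n-k$ survive. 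In the surviving terms I will push the powers of $a$ back off $y$ using $[a\cdot u,w]=2a\cdot[u,w]-[u,a\cdot w]$. This expresses $[P_a^i(y),w]$ as a combination of terms $a^j\cdot[y,P_a^{i-j}(w)]$, and again terms where $x$ carries at least $k$ factors of $a$ vanish.

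The main obstacle is organizing this bookkeeping so that every term eventually dies. My first attempt is to choose $n$ large (say $n=2k$, or larger) and look for a filtration argument: each surviving term either contains $P_a^{\ge k}(x)=0$ or is of the form $a^{m}\cdot[y,P_a^{r}(x)]$ with $r<k$. Then I would perform a downward induction on $r$, showing that $[y,P_a^{r}(x)]\in E_P^A(a)$ for $r=k-1,k-2,\dots,0$. The base case $r=k-1$ should follow from the formula applied with $x$ replaced by $P_a^{k-1}(x)$, since that element is killed by a single application of $P_a$. If this bookkeeping fails, the fallback is to work in finite dimension and use the Fitting decomposition $P=E_P^A(a)\dot{+}\bigcap_k a^k\cdot P$ of $P_a$, testing the operator identity on it.
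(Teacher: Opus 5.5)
Your handling of $\operatorname{Ann}_L(B)$ is correct, and so is your caveat. Taking $\cdot=0$, any Lie algebra is a transposed Poisson algebra, and in $\mathfrak{sl}_2$ the subalgebra $B=\mathbf{k}h$ has $\operatorname{Ann}_L(B)=\mathbf{k}h$, which is not an ideal. So $B$ must be an ideal; the paper only needs $B=P$. Closure of $E_P^A(a)$ under $\cdot$ is also fine.

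The gap is the bracket closure of $E_P^A(a)$, and it is not a matter of bookkeeping. The binomial formula and the rewriting rule $[a\cdot u,w]=2a\cdot[u,w]-[u,a\cdot w]$ both use only that $\varphi=P_a$ is a $\frac12$-derivation of $P_L$. That property alone does not imply the claim. Take the Heisenberg algebra with basis $x,y,z$ and $[y,x]=z$, and set $\varphi(x)=0$, $\varphi(y)=\lambda y$, $\varphi(z)=\frac{\lambda}{2}z$ with $\lambda\neq 0$. Then $2\varphi[u,v]=[\varphi u,v]+[u,\varphi v]$ holds, yet $\varphi^n([y,x])=(\lambda/2)^n z\neq 0$ for every $n$.

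In particular your base case fails. For $a\cdot x'=0$ the formula only yields $2^n a^n\cdot[y,x']=[a^n\cdot y,x']$, and nothing in your toolkit forces this to vanish. The Fitting-decomposition fallback has the same defect if you only test the operator identity for $P_a$, since the example is finite-dimensional. The proposition is also not restricted to finite dimension.

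What is missing is that Eq.~\eqref{transposedpoissonlu} holds with $z$ equal to a power of $a$ directly, not just as an iterate of the case $z=a$. Equivalently, $P_a^2=P_{a\cdot a}$ is itself a $\frac12$-derivation, which fails for $\varphi^2$ in the example above.

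Put $b=a^k$. Applying Eq.~\eqref{transposedpoissonlu} twice with $z=b$ gives $4b^2\cdot[y,x]=[b^2\cdot y,x]+2[b\cdot y,b\cdot x]+[y,b^2\cdot x]$. Applying it once with $z=b^2$ gives $2b^2\cdot[y,x]=[b^2\cdot y,x]+[y,b^2\cdot x]$. Subtracting, and using characteristic $\neq 2$,
\[
a^{2k}\cdot[y,x]=[a^k\cdot y,\,a^k\cdot x].
\]
This is Proposition~\ref{idempotentlpro} with $e$ replaced by $a^k$. Hence $a^k\cdot x=0$ implies $a^{2k}\cdot[y,x]=0$, so $E_P^A(a)$ is closed under brackets with all of $P$, in any dimension. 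With this identity no induction or filtration is needed.
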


\begin{proof}
It is straightforward.
\end{proof}

\begin{rmk}
 However, for each $a\in P$, $E_P^L(a)$ is not necessarily a subalgebra of $P$. For example, by  \cite{Bai}, $(P=\mathbb{C}e_1\oplus\mathbb{C}e_2,\cdot, [\cdot,\cdot])$ is a $2$-dimensional transposed Poisson algebra with the non-zero commutative associative product and the non-zero Lie bracket given by
\begin{eqnarray*}
 && e_1 \cdot e_1 = e_2,\quad [e_1, e_2] = e_2.
\end{eqnarray*}
Then we have $E_P^L(e_1)=\mathbb{C}e_1$, which is not a subalgebra of $P$.
\end{rmk}

\subsection{Idempotents in transposed Poisson algebras}
Idempotents in transposed Poisson algebras have many properties. They are related to multiplicative Hom-Lie algebra structures and play an important role in studying the Frattini theory. \vspa
\begin{pro}\label{idempotentlpro}
Let $(P,\cdot,[\cdot,\cdot])$ be a transposed Poisson algebra with an idempotent $e$ of $P_A$. Then we have $e\cdot[x,y]=[e\cdot x,e\cdot y]$ for all $x,y\in P$.
\end{pro}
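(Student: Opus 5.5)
The plan is to apply the compatibility condition \eqref{transposedpoissonlu} with $z=e$ several times and combine the resulting identities, using $e\cdot e=e$ together with associativity of $P_A$. No earlier structural result is needed; it is a direct computation.

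First, taking $z=e$ in \eqref{transposedpoissonlu} gives
\begin{eqnarray*}
2e\cdot[x,y]=[e\cdot x,y]+[x,e\cdot y].
\end{eqnarray*}
Next, apply \eqref{transposedpoissonlu} with $z=e$ to the pair $(e\cdot x,y)$. Since $e\cdot(e\cdot x)=(e\cdot e)\cdot x=e\cdot x$, this yields $2e\cdot[e\cdot x,y]=[e\cdot x,y]+[e\cdot x,e\cdot y]$. Similarly, the pair $(x,e\cdot y)$ gives $2e\cdot[x,e\cdot y]=[e\cdot x,e\cdot y]+[x,e\cdot y]$.

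Add these last two identities. On the left we get $2e\cdot\big([e\cdot x,y]+[x,e\cdot y]\big)$. By the first identity this equals $2e\cdot(2e\cdot[x,y])$, and associativity with $e\cdot e=e$ turns it into $4e\cdot[x,y]$. On the right we get $[e\cdot x,y]+[x,e\cdot y]+2[e\cdot x,e\cdot y]$, which by the first identity again equals $2e\cdot[x,y]+2[e\cdot x,e\cdot y]$. Comparing the two sides gives $2e\cdot[x,y]=2[e\cdot x,e\cdot y]$. Dividing by $2$, which is allowed since $\operatorname{char}{\bf k}\neq 2$, gives the claim.

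There is no real obstacle here. The only point to watch is the idempotent simplification $e\cdot(e\cdot u)=e\cdot u$ used on both sides, and the characteristic assumption needed for the final division.
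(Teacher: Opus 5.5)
Your proposal is correct and follows essentially the same route as the paper: both expand $4e\cdot[x,y]=2e\cdot(2e\cdot[x,y])$ by applying the compatibility condition with $z=e$ to the pairs $(e\cdot x,y)$ and $(x,e\cdot y)$, then compare with $2e\cdot[x,y]=[e\cdot x,y]+[x,e\cdot y]$. Your explicit note that the final division by $2$ uses $\operatorname{char}{\bf k}\neq 2$ is a small point the paper leaves implicit.
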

\begin{proof}
Since $4e\cdot [x,y]=4e^2\cdot [x,y]$, by Eq.~\eqref{transposedpoissonlu}, we have
\begin{eqnarray*}
&&2[e\cdot x,y]+2[x,e\cdot y]=2e\cdot\big([e\cdot x,y]+[x,e\cdot y]\big)=[e\cdot x,y]+[x,e\cdot y]+2[e\cdot x,e\cdot y].
\end{eqnarray*}
Hence, we have
\begin{eqnarray*}
&&2[e\cdot x,e\cdot y]=[e\cdot x,y]+[x,e\cdot y]=2e\cdot [x,y].
\end{eqnarray*}
This completes the proof.
\end{proof}

\begin{rmk}\label{Pehomomorphism}
Proposition~\ref{idempotentlpro} shows that for a transposed Poisson algebra $(P,\cdot,[\cdot,\cdot])$, the linear map $P_e$ is not only a $\frac{1}{2}$-derivation~\cite{1-2}, but also a homomorphism of the Lie algebra $(P,[\cdot,\cdot])$.
\end{rmk}

Recall that a \textbf{Hom-Lie algebra}~\cite{HomLie} is a triple $(P,[\cdot,\cdot],\varphi)$ where $P$ is a vector space, $[\cdot,\cdot]:P \otimes P \to P$ is skew-symmetric and bilinear and $\varphi:P \to P$ is a linear map satisfying
\begin{equation}
[\varphi(x), [y,z]] + [\varphi(y), [z,x]] + [\varphi(z), [x,y]] = 0 \quad \text{for all}\: x,y,z \in P.
\end{equation}
Moreover, if $\varphi$ is an algebra homomorphism, then the Hom-Lie algebra is called \textbf{multiplicative}.
By \cite{Bai},  for any $x \in P$,  $(P,[\cdot,\cdot], P_x)$ is a Hom-Lie algebra structure.
Consequently, by Proposition~\ref{idempotentlpro}, we obtain the following conclusion.

\begin{cor}
Let $(P,\cdot,[\cdot,\cdot])$ be a transposed Poisson algebra with an idempotent $e$ of $P_A$. Then $(P,[\cdot,\cdot],P_e)$ is a multiplicative Hom-Lie algebra.
\end{cor}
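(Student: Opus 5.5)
The corollary needs two facts: that $(P,[\cdot,\cdot],P_e)$ satisfies the Hom-Jacobi identity, and that $P_e$ is a homomorphism of the bracket. I will take them from earlier results and then combine them.

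For the Hom-Lie structure, I invoke the result of \cite{Bai} quoted just before the statement: for every $x\in P$, the triple $(P,[\cdot,\cdot],P_x)$ is a Hom-Lie algebra. Taking $x=e$ gives the identity
\[
[e\cdot x,[y,z]]+[e\cdot y,[z,x]]+[e\cdot z,[x,y]]=0 \quad\text{for all } x,y,z\in P.
\]
Skew-symmetry and bilinearity are inherited from the Lie bracket of $P_L$. If one prefers not to rely on \cite{Bai}, the identity can be checked directly. Multiply the Jacobi identity by $2e$. Apply Eq.~\eqref{transposedpoissonlu} to each term $2e\cdot[x,[y,z]]$, which produces $[e\cdot x,[y,z]]+[x,e\cdot[y,z]]$. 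Then expand $e\cdot[y,z]$ once more using Eq.~\eqref{transposedpoissonlu}. The terms that remain cancel cyclically by the ordinary Jacobi identity, leaving twice the Hom-Jacobi sum equal to zero. Since $\operatorname{char}\bf k\neq 2$, the Hom-Jacobi sum itself vanishes. This cyclic bookkeeping is the only place where real care is needed, which is why I would simply cite \cite{Bai}.

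For multiplicativity, I need $P_e([x,y])=[P_e(x),P_e(y)]$, that is, $e\cdot[x,y]=[e\cdot x,e\cdot y]$. This is exactly Proposition~\ref{idempotentlpro}, as already observed in Remark~\ref{Pehomomorphism}.

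Combining the two facts gives that $(P,[\cdot,\cdot],P_e)$ is a multiplicative Hom-Lie algebra. No step is a genuine obstacle, since both ingredients are already available.
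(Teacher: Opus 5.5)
Your proposal is correct and matches the paper's argument: the Hom-Jacobi identity is the result of \cite{Bai} applied to $P_e$, and multiplicativity is Proposition~\ref{idempotentlpro}. One small slip in your optional direct check: the leftover terms do not cancel outright but combine via the Jacobi identity to $-\tfrac12$ of the Hom-Jacobi sum, so you end with $\tfrac12$ of that sum (not twice it) equal to zero, which is harmless since the characteristic is not $2$.
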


\begin{cor}\label{idempotentcor}
Let $(P,\cdot,[\cdot,\cdot])$ be a transposed Poisson algebra with an idempotent $e$ of $P_A$. Then we have $P_eQ_e=Q_eP_e=Q_e$.
\end{cor}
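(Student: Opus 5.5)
We need $P_eQ_e = Q_eP_e = Q_e$, that is, $e\cdot[e,y]=[e,e\cdot y]=[e,y]$ for all $y\in P$. The plan is to specialize Proposition~\ref{idempotentlpro} and the compatibility condition \eqref{transposedpoissonlu} at $x=e$.

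First, Proposition~\ref{idempotentlpro} with $x=e$ gives
\[
e\cdot[e,y]=[e\cdot e,e\cdot y]=[e,e\cdot y],
\]
so $P_eQ_e=Q_eP_e$ already holds. It remains to show that this common value equals $[e,y]$.

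For that, apply \eqref{transposedpoissonlu} with $z=x=e$:
\[
2e\cdot[e,y]=[e\cdot e,y]+[e,e\cdot y]=[e,y]+[e,e\cdot y].
\]
Substituting $[e,e\cdot y]=e\cdot[e,y]$ from the first step yields $2e\cdot[e,y]=[e,y]+e\cdot[e,y]$. Hence $e\cdot[e,y]=[e,y]$, and therefore $[e,e\cdot y]=[e,y]$ as well. In operator form this is $P_eQ_e=Q_eP_e=Q_e$.

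There is no real obstacle here. The only point to keep in mind is that the argument relies on Proposition~\ref{idempotentlpro}, whose proof divides by $2$; this is permitted because the standing assumption is $\mathrm{char}\,\bf k\neq 2$.
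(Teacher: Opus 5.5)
Your proposal is correct and follows essentially the same route as the paper. Both arguments specialize Proposition~\ref{idempotentlpro} at $x=e$ to get $P_eQ_e=Q_eP_e$, and then use \eqref{transposedpoissonlu} with $z=x=e$ to conclude $e\cdot[e,y]=[e,y]$.
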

\begin{proof}
By Proposition~\ref{idempotentlpro}, we have $P_eQ_e=Q_eP_e$. Since $[e,x]=[e\cdot e,x]=2e\cdot[e,x]-[e,e\cdot x]=e\cdot [e,x]$, we have $Q_e=P_eQ_e=Q_eP_e$.
\end{proof}

\section{The nilpotency of transposed Poisson algebras}
In this section, we consider nilpotency and nilpotent radicals of transposed Poisson algebras.\vspa

\subsection{Engel's theorem and some constructions of nilpotent transposed Poisson algebras}
In this subsection, we prove Engel's theorem for transposed Poisson algebras. The proof is similar to that in~\cite[Theorem 2.13]{On the solvable Poisson algebras}.\vspa
\begin{thm}\label{Engelthm}
Let $(P,\cdot,[\cdot,\cdot])$ be a transposed Poisson algebra. Then $P$ is nilpotent if and only if $P_A$ and $P_L$ are nilpotent.
\end{thm}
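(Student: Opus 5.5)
The implication ``$\Rightarrow$'' is immediate. Write $P^{\cdot k}$ for the span of associative products of $k$ elements of $P$, and $L^{s}$ for the lower central series of $P_L$ ($L^0=P$, $L^{s+1}=[P,L^s]$). A straightforward induction gives $P^{\cdot (k+1)}\subseteq P^{k}$ and $L^{s}\subseteq P^{s}$, so $P^m=0$ forces both $P_A$ and $P_L$ to be nilpotent. The content of the theorem is the converse. My plan is to trap the lower central series of $P$, computed via Proposition~\ref{nildefi1}, inside a ``mixed'' filtration built from the two separate series.

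For ``$\Leftarrow$'', put, for $n\geq 0$,
\[
D_n:=L^{n}+\sum_{r+s=n,\ r\geq 1} P^{\cdot r}\cdot L^{s}.
\]
Since the $D_n$ are decreasing in $n$, it suffices to prove the main claim $P^{n}\subseteq D_n$ for all $n$. Granting it, suppose $P^{\cdot N}=0$ and $L^{M}=0$. Then for $n\geq N+M$ every summand of $D_n$ has $r\geq N$ or $s\geq M$, hence vanishes, so $P^{N+M}=0$.

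I prove the claim by induction on $n$, using Proposition~\ref{nildefi1} in the form $P^{n+1}=P\cdot P^{n}+[P,P^{n}]$. The associative part is harmless: $P\cdot L^{n}=P^{\cdot 1}\cdot L^n$ and $P\cdot(P^{\cdot r}\cdot L^{s})\subseteq P^{\cdot(r+1)}\cdot L^{s}$ by associativity and commutativity. For the Lie part, $[P,L^n]=L^{n+1}$. For a generator $a\cdot u$ with $a\in P^{\cdot r}$ and $u\in L^s$, Eq.~\eqref{transposedpoissonlu} gives
\[
[x,a\cdot u]=2a\cdot[x,u]-[a\cdot x,u].
\]
The first term lies in $P^{\cdot r}\cdot L^{s+1}\subseteq D_{n+1}$. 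The second lies in $[P^{\cdot(r+1)},L^{s}]$, so what remains is an auxiliary lemma controlling brackets of associative powers against Lie powers.

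That lemma is the main obstacle. To make an induction close, I will prove the stronger statement
\[
[P^{\cdot k},\,P^{\cdot r}\cdot L^{s}]\subseteq D_{k+r+s}\qquad\text{for all } k\geq 1,\ r,s\geq 0,
\]
with the convention that $P^{\cdot 0}\cdot L^s$ means $L^s$. The term $[a\cdot x,u]$ above is the case $(k,r)=(r+1,0)$, which lands in $D_{n+1}$. I argue by induction on $k$, and for fixed $k$ by induction on $r$.

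For $k=1$, the identity $[x,a\cdot u]=2a\cdot[x,u]-[a\cdot x,u]$ reduces the case $(1,r)$ to $(r+1,0)$. For $k\geq 2$, write a generator of $P^{\cdot k}$ as $a\cdot b$ with $a\in P$ and $b\in P^{\cdot(k-1)}$, and use the rearranged compatibility
\[
[a\cdot b,w]=2a\cdot[b,w]-[b,a\cdot w].
\]
The term $a\cdot[b,w]$ has total degree raised by one through multiplication by $a$, so it is handled by the induction hypothesis at $k-1$ together with $P\cdot D_m\subseteq D_{m+1}$. The term $[b,a\cdot w]$ has smaller $k-1$ but larger $r+1$, so it is again covered by induction on $k$.

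The delicate point is to order this double induction so that the reduction from $(1,r)$ to $(r+1,0)$ does not become circular. I will do this by inducting on $k+r$ and treating the base cases $r=0$ with $k$ arbitrary directly, repeatedly applying the second identity to peel off associative factors. I also use that each $L^s$ is an ideal of $P$, by Proposition~\ref{idealpro2}, to keep all terms inside the spaces $P^{\cdot r}\cdot L^{s}$. Finite-dimensionality is not needed for this argument.
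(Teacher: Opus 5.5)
Your key claim $P^{n}\subseteq D_n$ is false, and so is the auxiliary lemma $[P^{\cdot k},P^{\cdot r}\cdot L^{s}]\subseteq D_{k+r+s}$. No ordering of the double induction can make the argument close.

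Take $P=\mathbf{k}e_1\oplus\mathbf{k}e_2\oplus\mathbf{k}e_3$ whose only nonzero products are $e_1\cdot e_1=e_2$ and $[e_1,e_2]=e_3$. This is a transposed Poisson algebra:
\begin{itemize}
\item all triple products and all double brackets vanish;
\item $z\cdot[x,y]\in P\cdot e_3=0$;
\item writing $x=\sum x_ie_i$ and similarly for $y,z$, one gets $[z\cdot x,y]+[x,z\cdot y]=(-z_1x_1y_1+z_1y_1x_1)e_3=0$.
\end{itemize}
It is even nilpotent. Here $L^1=\mathbf{k}e_3$, $L^2=0$, $P\cdot L^1=0$ and $P^{\cdot 2}\cdot L^0=P^{\cdot 3}=0$, so $D_2=0$. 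But $e_3=[e_1,e_1\cdot e_1]\in[P,P^1]\subseteq P^2$, so $P^2\not\subseteq D_2$. Your lemma therefore already fails at $(k,r,s)=(2,0,0)$ and at $(1,1,0)$.

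The circularity you flagged between $(1,r)$ and $(r+1,0)$ is genuine, not an ordering problem. Eq.~\eqref{transposedpoissonlu} only expresses differences such as $[a\cdot b,c]-[a\cdot c,b]=2a\cdot[b,c]$ through $P\cdot L^1$; it never controls $[a\cdot b,c]$ itself. Structurally, the associative powers $P^{\cdot r}$ are not Lie ideals: in the example, $[e_1,e_2]=e_3\notin P\cdot P$. A bracket can thus destroy associative degree, and your filtration is not stable in the way you need: $[P,D_1]\not\subseteq D_2$.

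The repair is to count brackets rather than push associative degree through brackets, which is what the paper does. By Proposition~\ref{nildefi1}, $P^k$ is spanned by elements $T_1\cdots T_k(y)$ with each $T_i\in\{P_{x_i},Q_{x_i}\}$.
\begin{itemize}
\item A word with at least $M$ factors $Q$ vanishes. The paper moves all $Q$'s to the right using $Q_xP_y=2P_yQ_x-Q_{x\cdot y}$, which preserves the number of $Q$'s. Alternatively, each $L^j$ is an ideal of $P$ by Proposition~\ref{idealpro2}, so $P_x(L^j)\subseteq L^j$ and $Q_x(L^j)\subseteq L^{j+1}$.
\item A word with fewer than $M$ factors $Q$ and length $k\geq (N-1)M$ contains, by pigeonhole, $N-1$ consecutive factors $P$. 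Such a block maps $P$ into $P^{\cdot N}=0$, and this is checked on the original word, with no rewriting needed.
\end{itemize}
This yields $P^{(N-1)M}=0$, a multiplicative bound. Your additive bound $P^{N+M}=0$ is not established by your argument. The easy direction and your reduction from $P^n\subseteq D_n$ to nilpotency are fine as written.
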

\begin{proof}
If $P$ is nilpotent, it is obvious that $P_A$ and $P_L$ are nilpotent.

Assume that $P_A$ and $P_L$ are nilpotent. Then there exist non-negative integers $r$ and $s$ such that $P_L^s=P_A^r=0$. By the definition of the lower central series, it suffices to show that there exists a positive integer $k$ such that any product $T_1T_2\cdots T_k = 0$, where each $T_i$ is either $P_{x_i}$ or  $Q_{x_i}$ for some $x_i\in P$.

Observe that Eq.~\eqref{transposedpoissonlu} is equivalent to the relation
\[
Q_xP_y = 2P_yQ_x - Q_{x\cdot y} \qquad \text{for all}\: x,y\in P.
\]
Thus, for any $T = T_1T_2\cdots T_k$ that contains a factor $Q_{x_i}P_{x_{i+1}}$, we may rewrite it as
\begin{eqnarray*}
T &=& T_1T_2\cdots T_{i-1}\,Q_{x_i}P_{x_{i+1}}\,T_{i+2}\cdots T_k \\
&=& 2\,T_1T_2\cdots T_{i-1}\,P_{x_{i+1}}Q_{x_i}\,T_{i+2}\cdots T_k
      - T_1T_2\cdots T_{i-1}\,Q_{x_i\cdot x_{i+1}}\,T_{i+2}\cdots T_k .
\end{eqnarray*}
This step does not change the number of $Q$ in the product\delete{, we call it {\bf inversion}}. Repeating it, we can give it a regular form
\begin{eqnarray*}
T = a\,P_{j_1}\cdots P_{j_t}Q_{j_{t+1}}\cdots Q_{j_k} + W,
\end{eqnarray*}
where $a\in {\bf k}$ and $W$ is a linear combination of products of length strictly less than $k$. Notice that every term in $T$ contains $k-t$ times of $Q$. Consequently, if $k-t \ge s$, then we have $T=0$ directly. If $k-t < s$, we choose $k \ge rs$. In such a long product there must appear a factor of the form $P_{x_i}P_{x_{i+1}}\cdots P_{x_{i+r-1}}$. Therefore we have $T=0$ as well. This completes the proof.
\end{proof}

\begin{cor}\label{realEngelthm}
Let $(P,\cdot,[\cdot,\cdot])$ be a finite-dimensional transposed Poisson algebra. Then $P$ is nilpotent if and only if $P_x$ and $Q_x$ are nilpotent for all $x\in P$.
\end{cor}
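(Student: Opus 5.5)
This follows from Theorem~\ref{Engelthm} together with the classical Engel theorems for the two underlying structures.

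The forward direction is immediate. If $P$ is nilpotent with $P^m=0$, then Proposition~\ref{nildefi1} gives $P^{k+1}=P\cdot P^k+[P,P^k]$. By induction on $k$, any product of $k$ operators, each of the form $P_{x_i}$ or $Q_{x_i}$, maps $P$ into $P^k$. In particular $P_x^m=0$ and $Q_x^m=0$ for every $x\in P$.

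For the converse, assume $P_x$ and $Q_x$ are nilpotent for all $x\in P$. By Theorem~\ref{Engelthm} it suffices to show that $P_A$ and $P_L$ are both nilpotent.

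\textbf{Lie structure.} Here $(P,[\cdot,\cdot])$ is a finite-dimensional Lie algebra in which every $\ad x=Q_x$ is nilpotent. The classical Engel theorem, which holds over any field, gives that $P_L$ is nilpotent.

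\textbf{Associative structure.} Here $(P,\cdot)$ is a finite-dimensional commutative associative algebra in which every $P_x$ is nilpotent. Then each $x$ is a nil element, since $x^{n+1}=P_x^n(x)=0$. A finite-dimensional nil algebra is nilpotent; this is Wedderburn's theorem. To stay self-contained I would run the standard Engel-type argument instead. The set $\{P_x : x\in P\}$ is a commuting family of nilpotent operators on a finite-dimensional space. It is closed under linear combinations, because $P_{x+y}=P_x+P_y$. It generates an associative algebra of operators, and that algebra consists of nilpotent operators: a sum of commuting nilpotents is nilpotent, and so is a product of commuting operators one of which is nilpotent. A finite-dimensional associative algebra of nilpotent operators is nilpotent, i.e.\ products of length $n=\dim P$ vanish. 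The proof is by the same flag argument used for Engel's theorem: find a common kernel vector and induct on the quotient. Hence $P_{x_1}\cdots P_{x_n}=0$, which gives $P_A^{n+1}=0$.

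Combining the two cases with Theorem~\ref{Engelthm} shows that $P$ is nilpotent.

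The only point needing care is that the associative case does not use the characteristic or algebraic closedness. The commuting-nilpotent argument above avoids both, so the result holds over the paper's standing field $\bfk$ of characteristic not $2$.
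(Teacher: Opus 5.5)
Your proposal is correct and follows the paper's proof. Both reduce via Theorem~\ref{Engelthm} to the nilpotency of $P_A$ and $P_L$, then apply the classical Engel theorem to $P_L$ and the standard result for finite-dimensional commutative associative algebras to $P_A$. The only differences are that you prove the associative case directly through the commuting family $\{P_x : x\in P\}$ instead of citing it, and you derive the forward direction from Proposition~\ref{nildefi1} rather than from Theorem~\ref{Engelthm}. That family is in fact already closed under products, since $P_xP_y=P_{x\cdot y}$, so the step of passing to the generated algebra is not needed.
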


\begin{proof}
The ``only if " part follows directly from Theorem~\ref{Engelthm}. Assume that $P_x$ and $Q_x$ are nilpotent for all $x\in P$. By Engel's theorem for Lie algebras and the well-known analogous result for finite-dimensional commutative associative algebras, we conclude that both $P_A$ and $P_L$ are nilpotent. Hence, $P$ is nilpotent by Theorem~\ref{Engelthm}. This completes the proof.
\end{proof}

\begin{rmk}
Corollary~\ref{realEngelthm} is an analogue of Engel's theorem for Lie algebras. Corollary~\ref{realEngelthm} for a finite-dimensional transposed Poisson algebra can also be proved by induction on $\operatorname{dim}(P)$ using Proposition~\ref{idealpro2} and Lemma~\ref{minimalideal-Annlem}, similar to \cite{Frattini of dialgebras}.
\end{rmk}

Every finite-dimensional dialgebra possesses a unique maximal solvable ideal~\cite{Frattini of dialgebras}. The same conclusion holds for nilpotent ideals.
\begin{cor}
Let $(P,\cdot,[\cdot,\cdot])$ be a transposed Poisson algebra. If $I$ and $J$ are nilpotent ideals of $P$, then $I+J$ is also a nilpotent ideal. In particular, there exists a unique maximal nilpotent ideal in a finite-dimensional transposed Poisson algebra.
\end{cor}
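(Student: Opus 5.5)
Since $I$ and $J$ are ideals, $I+J$ is clearly an ideal of $P$. The natural tool for nilpotency is Theorem~\ref{Engelthm}: the ideal $I+J$, viewed as a transposed Poisson algebra in its own right, is nilpotent as soon as $(I+J)_A$ and $(I+J)_L$ are nilpotent. So the plan reduces to two classical facts.

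First, $I$ nilpotent implies $I_A$ and $I_L$ nilpotent, and likewise for $J$. Now $I_A$ and $J_A$ are ideals of the commutative associative algebra $P_A$. The sum of two nilpotent ideals of an associative algebra is nilpotent: if $I_A^r=0$ and $J_A^s=0$, then a product of $r+s-1$ elements of $I+J$ expands into products each containing at least $r$ factors from $I$ or at least $s$ factors from $J$. By commutativity, each such product can be regrouped so that the $I$-factors (or $J$-factors) are adjacent, and it therefore vanishes. Hence $(I+J)_A$ is nilpotent.

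Similarly, $I_L$ and $J_L$ are nilpotent ideals of the Lie algebra $P_L$. The sum of two nilpotent Lie ideals is nilpotent; this is the standard argument behind the existence of the nilradical, which does not need finite dimension. One shows by induction that $(I+J)^{n}$, meaning the lower central series in $P_L$, is contained in the sum of the terms $I^{a}$ and $J^{b}$ with $a+b$ roughly $n$, using that brackets of Lie ideals are again ideals of $P_L$. Applying Theorem~\ref{Engelthm} to the transposed Poisson algebra $I+J$ then gives the claim.

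For the ``in particular'' part, let $P$ be finite-dimensional and choose a nilpotent ideal $N$ of maximal dimension; the zero ideal shows one exists. For any nilpotent ideal $I$, the sum $N+I$ is a nilpotent ideal, so by maximality of dimension $N+I=N$, i.e.\ $I\subseteq N$. Hence $N$ contains every nilpotent ideal and is unique.

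The only step needing care is the Lie-algebra sum lemma, where one should check that the standard proof does not use finite-dimensionality. If that is in doubt, I would instead apply Theorem~\ref{Engelthm} directly: bound the length of products $T_1\cdots T_k$ of operators $P_x,Q_x$ with $x\in I\cup J$ acting on $I+J$, using the commutation relation $Q_xP_y=2P_yQ_x-Q_{x\cdot y}$ as in its proof.
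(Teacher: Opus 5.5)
Your proposal is correct and takes the same route as the paper. The paper's proof is only the remark that the result is straightforward from Theorem~\ref{Engelthm}. You fill in the implicit steps: $(I+J)_A$ and $(I+J)_L$ are sums of nilpotent ideals of $P_A$ and $P_L$, hence nilpotent by the classical arguments, which do not need finite dimension. You also add the standard maximal-dimension argument for the unique maximal nilpotent ideal.
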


\begin{proof}
It is straightforward by Theorem~\ref{Engelthm}.
\end{proof}
Next, we present some constructions of nilpotent and solvable transposed Poisson algebras.

Recall that transposed Poisson algebras are closed under taking tensor products~\cite{Bai}. Let $(P_1, \cdot_1, [\cdot, \cdot]_1)$ and $(P_2, \cdot_2, [\cdot, \cdot]_2)$ be two transposed Poisson algebras. Define two binary operations $\cdot$ and $[\cdot, \cdot]$ on $P_1 \otimes P_2$ by
\begin{eqnarray}\label{tensor1}
&&(x_1 \otimes x_2) \cdot (y_1 \otimes y_2) = x_1 \cdot_1 y_1 \otimes x_2 \cdot_2 y_2,
\end{eqnarray}
\begin{eqnarray}\label{tensor2}
&&[x_1 \otimes x_2, y_1 \otimes y_2] = [x_1, y_1]_1 \otimes x_2 \cdot_2 y_2 + x_1 \cdot_1 y_1 \otimes [x_2, y_2]_2,
\end{eqnarray}
for all $x_1, y_1 \in P_1$, $x_2, y_2 \in P_2$. Then $(P_1 \otimes P_2, \cdot, [\cdot, \cdot])$ is a transposed Poisson algebra.
\begin{pro}\label{construct1}
 Let $(P_1, \cdot_1, [\cdot, \cdot]_1)$ and $(P_2, \cdot_2, [\cdot, \cdot]_2)$ be two transposed Poisson algebras. If $P_1$ or $P_2$ is nilpotent (resp. solvable), then $P_1 \otimes P_2$ is nilpotent (resp. solvable).
\end{pro}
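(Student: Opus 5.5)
By symmetry of the tensor construction (both products on $P_1\otimes P_2$ are symmetric in the roles of the two factors), it suffices to treat the case where $P_1$ is nilpotent (resp. solvable). I will show that the lower central series (resp. derived series) of $P=P_1\otimes P_2$ is controlled termwise by that of $P_1$. Concretely, the target inclusions are
\begin{eqnarray*}
P^{n}\subseteq P_1^{n}\otimes P_2 \quad\text{and}\quad P^{(n)}\subseteq P_1^{(n)}\otimes P_2 \qquad\text{for all } n\geq 0 .
\end{eqnarray*}
If $P_1^{m}=0$ (resp. $P_1^{(m)}=0$), these give $P^{m}=0$ (resp. $P^{(m)}=0$), which proves the statement.

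For the nilpotent case I will use the simplified form from Proposition~\ref{nildefi1}, namely $P^{n+1}=P\cdot P^{n}+[P,P^{n}]$, and induct on $n$; the case $n=0$ is trivial. Assume $P^{n}\subseteq P_1^{n}\otimes P_2$, and take $x_1\otimes x_2\in P$ and $y_1\otimes y_2$ with $y_1\in P_1^{n}$. By \eqref{tensor1} we get
\begin{eqnarray*}
(x_1\otimes x_2)\cdot(y_1\otimes y_2)=x_1\cdot_1 y_1\otimes x_2\cdot_2 y_2\in P_1\cdot_1 P_1^{n}\otimes P_2 .
\end{eqnarray*}
By \eqref{tensor2} we get
\begin{eqnarray*}
[x_1\otimes x_2,y_1\otimes y_2]=[x_1,y_1]_1\otimes x_2\cdot_2 y_2+x_1\cdot_1 y_1\otimes[x_2,y_2]_2\in\bigl([P_1,P_1^{n}]_1+P_1\cdot_1 P_1^{n}\bigr)\otimes P_2 .
\end{eqnarray*}
Both lie in $P_1^{n+1}\otimes P_2$, again by Proposition~\ref{nildefi1} applied to $P_1$. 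Extending bilinearly, since $P_1^{n}\otimes P_2$ is spanned by such pure tensors, closes the induction.

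The solvable case is the same computation with $P_1^{(n)}\otimes P_2$ in both slots, using the definition $P^{(n+1)}=P^{(n)}\cdot P^{(n)}+[P^{(n)},P^{(n)}]$. Here no simplification is needed, since both arguments already lie in $P_1^{(n)}\otimes P_2$. The only point needing care, and the main (mild) obstacle, is that the Lie bracket \eqref{tensor2} mixes the associative product of the first factor into the bracket. This is exactly why both the $\cdot_1$ and $[\cdot,\cdot]_1$ terms must land in the same filtration step, which holds because the series of $P_1$ are defined using both operations. No hypothesis on $P_2$ (e.g.\ finite-dimensionality) is needed.
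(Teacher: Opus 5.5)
Your proposal is correct and follows essentially the same route as the paper. Both reduce to the case where $P_1$ is nilpotent (resp. solvable), prove $(P_1\otimes P_2)^{n}\subseteq P_1^{n}\otimes P_2$ by induction using Proposition~\ref{nildefi1} together with Eqs.~\eqref{tensor1} and~\eqref{tensor2}, and handle the derived series by the same computation, which you write out where the paper only says ``similarly.''
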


\begin{proof}
For brevity, the subscripts $1$ and $2$ in the binary operations $\cdot$ and $[\cdot,\cdot]$ will be suppressed. Without loss of generality, assume that $P_1$ is nilpotent. We claim that $(P_1 \otimes P_2)^{k}\subseteq P_1^{k} \otimes P_2$ for all $k\ge 0$. It is obvious when $k=0$. Suppose by induction that $(P_1 \otimes P_2)^{i}\subseteq P_1^{i} \otimes P_2$ holds for all $i\le k$. Now consider $i=k+1$. Using Eqs.~\eqref{tensor1} and~\eqref{tensor2}, we obtain
\begin{eqnarray*}
(P_1 \otimes P_2)^{k+1}&=&[(P_1 \otimes P_2)^{k},P_1 \otimes P_2]+(P_1 \otimes P_2)^{k}\cdot (P_1 \otimes P_2)\\
&\subseteq&(P_1\cdot P_1^k)\otimes P_2+[P_1,P_1^k]\otimes P_2+(P_1\cdot P_1^k)\otimes P_2\\
&\subseteq& P_1^{k+1} \otimes P_2.
\end{eqnarray*}
Thus, $P_1 \otimes P_2$ is nilpotent. The solvable case can be proved similarly.
\end{proof}

Recall \cite{Bai} that there is a natural construction of transposed Poisson algebras from commutative associative algebras with a derivation. Let $(P,\cdot)$ be a commutative associative algebra and $D$ be a derivation of $(P,\cdot)$.
Define the binary operation
\begin{eqnarray}\label{D-construct}
&&[x,y] := x \cdot D(y) - D(x) \cdot y \qquad \text{for all}~~x,~y \in P.
\end{eqnarray}
Then $(P,\cdot,[\cdot,\cdot])$ is a transposed Poisson algebra.

\begin{pro}\label{construct2}
Let $(P,\cdot)$ be a finite-dimensional commutative associative algebra and $D$ be a derivation of $(P,\cdot)$. Then the transposed Poisson algebra $(P,\cdot,[\cdot,\cdot])$ defined by Eq.~\eqref{D-construct} is nilpotent (resp. solvable) if and only if $P_A$ is nilpotent (resp. solvable).
\end{pro}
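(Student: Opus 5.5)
We prove the two directions separately. The ``only if'' direction is immediate from the definitions. The lower central series (resp. derived series) of $P$ dominates that of $P_A$, since $P_A^{k}\subseteq P^{k}$ and $P_A^{(k)}\subseteq P^{(k)}$ by an evident induction. So nilpotency (resp. solvability) of $P$ forces that of $P_A$.

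For the nilpotent ``if'' direction, we use Theorem~\ref{Engelthm}: it suffices to show that $P_L$ is nilpotent once $P_A$ is nilpotent. Write $A^k$ for the lower central series of $P_A$; these are ideals of $P_A$. Because $D$ is a derivation, we expect $D$ not to preserve $A^k$, so we would not try to prove $P_L^{k}\subseteq A^{k}$ directly. Instead we use the identity
\[
[x,y]=x\cdot D(y)-D(x)\cdot y ,
\]
which shows that every bracket lies in the ideal $P\cdot P=A^1$. We then aim for an inclusion of the form $P_L^{k}\subseteq A^{\lceil k/2\rceil}$, or something similar.

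The key computation is $[x,z]$ for $z\in A^{m}=P\cdot A^{m-1}$, say $z=u\cdot w$. Then
\[
[x,u\cdot w]=x\cdot D(u)\cdot w+x\cdot u\cdot D(w)-D(x)\cdot u\cdot w .
\]
The terms $x\cdot D(u)\cdot w$ and $D(x)\cdot u\cdot w$ lie in $A^{m}$, but $x\cdot u\cdot D(w)$ only lies in $P\cdot P\cdot D(A^{m-1})$. So we need a statement controlling $D$ on powers. Since $D(P^{j})\subseteq P^{j-1}$ in associative power notation, where $P^j$ denotes the span of $j$-fold products, the Leibniz rule gives $D(A^{m})\subseteq A^{m-1}$. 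Then $x\cdot u\cdot D(w)\in P\cdot P\cdot A^{m-2}\subseteq A^{m}$, using commutativity and associativity to identify $A^{m}$ with the $(m+1)$-fold products.

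Hence $[P,A^{m}]\subseteq A^{m}$. Together with $[P,P]\subseteq A^{1}$, this alone gives nothing new, so we refine the induction: the claim to prove is $[P,A^{m}]\subseteq A^{m}$. Gaining a degree this way is the obstacle, and we would handle it instead via Engel's theorem. By Corollary~\ref{realEngelthm}, in finite dimension it suffices that each $Q_x$ is nilpotent. Now $Q_x=P_xD-P_{D(x)}$ on $P$, and $Q_x$ preserves the filtration $A^{m}$. On the associated graded space $A^{m}/A^{m+1}$ it acts as $P_x\bar D - P_{D(x)}$. Modulo $A^{m+1}$, multiplication by any element of $A^1$ vanishes, so only the image of $x$ in $P/A^1$ matters.

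This leaves one term, $x\cdot D(\cdot)$, which still might not vanish. Here I would instead compute the series directly, since $P_A^{r}=0$. A cleaner route: show $[a_1,[a_2,\dots[a_k,y]]]$ lies in the span of products of $k+1$ elements with $D$ applied to them, and with total number of $D$'s at most $k$. Each such monomial is a product of the form $D^{i_0}(y)\prod_j D^{i_j}(a_j)$. Taking $k\ge r$ gives at least $r$ factors (indeed $k+1$ factors), each of which lies in $P$, so the product lies in $P_A^{r}=0$. This settles nilpotency, by finite-dimensionality or even without it. The main step is verifying this monomial description inductively from Eq.~\eqref{D-construct} together with the Leibniz rule.

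For the solvable ``if'' direction, the same monomial bookkeeping works. We prove by induction that $P^{(n)}$ is spanned by products of $2^n$ factors of the form $D^{i}(a)$; the product part holds because $\cdot$ doubles the factor count, and a bracket $x\cdot D(y)-D(x)\cdot y$ of two products keeps the total count while applying $D$ distributes over the factors. We then need solvability of $P_A$ to give $P_A^{(m)}=0$. For commutative associative algebras, $P_A^{(m)}$ is exactly the span of $2^m$-fold products, so solvable and nilpotent coincide for $P_A$. A product of $2^m$ elements of the form $D^{i}(a)\in P$ then vanishes, which completes the argument.
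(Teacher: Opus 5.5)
Your final monomial argument is correct and is essentially the paper's proof written out on monomials. The paper first shows $D(P_A^k)\subseteq P_A^k$ via the Leibniz rule, then gets $P_L^{k+1}=[P,P_L^k]\subseteq P\cdot D(P_A^k)+D(P)\cdot P_A^k\subseteq P_A^{k+1}$, hence $P_L^k\subseteq P_A^k$, and concludes with Theorem~\ref{Engelthm}; for the solvable case it reduces to the nilpotent one because $P_A$ is nilpotent iff solvable, and your direct count of $2^n$ factors in $P^{(n)}$ works equally well.

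One correction: your remark that $D$ should not preserve $A^k$ is false. By the Leibniz rule, $D$ sends a $(k+1)$-fold product to a sum of $(k+1)$-fold products, which is exactly what your monomial description relies on. With this fact, every term of your first computation of $[x,u\cdot w]$ already lies in $A^{m+1}$, so the ``obstacle'' of gaining a degree is illusory and the Engel/associated-graded detour is unnecessary.
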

\begin{proof}
We only need to prove the necessity. First, we claim that $D(P_A^k)\subseteq P_A^k$ for all $k$. When $k=0$, the claim is obvious. Assume inductively that $D(P_A^i)\subseteq P_A^i$ holds for all $0\le i\le k$. For $i=k+1$, we have
\begin{eqnarray*}
D(P_A^{k+1})&=&D(P\cdot P_A^k)\\
&\subseteq & P\cdot P_A^k+D(P)\cdot P_A^k\\
&\subseteq &P_A^{k+1}.
\end{eqnarray*}
This completes the first claim.\par
Next, we claim that $P_L^k\subseteq P_A^k$ for all $k$. It is clear if $k=0$. Suppose that $P_L^i\subseteq P_A^i$ for all $0\le i\le k$. Then for $i=k+1$, using Eq.~\eqref{D-construct}, we obtain
\begin{eqnarray*}
P_L^{k+1}&=&[P,P_L^k]\subseteq [P,P_A^k]\\
&\subseteq& P\cdot D(P_A^k)+D(P)\cdot P_A^k\\
&\subseteq & P_A^{k+1}.
\end{eqnarray*}
This completes the second claim. Since $P_A$ is nilpotent, the inclusions $P_L^k\subseteq P_A^k$ for all $k$ imply that $P_L$ is also nilpotent. Consequently, $(P,\cdot,[\cdot,\cdot])$ is nilpotent by Theorem~\ref{Engelthm}.\par
The solvable case is clear since $P_A$ is nilpotent if and only if it is solvable.
\end{proof}

\subsection{Nilpotent radicals of transposed Poisson algebras}
For a finite-dimensional transposed Poisson algebra $(P,\cdot,[\cdot,\cdot])$, we denote by $Nil(P)$ the {\bf nilpotent radical} of $P$, that is, the maximal nilpotent ideal of $P$. Similarly, let $Nil_A(P)$ be the nilpotent radical of $P_A$, and $Nil_L(P)$ be the nilpotent radical of $P_L$. In this subsection, we will study their properties and relations.
\begin{lem}\label{minimalideal-Annlem}
Let $A$ be a minimal ideal and $N$ a nilpotent ideal of a transposed Poisson algebra $(P,\cdot,[\cdot,\cdot])$. If $A\subseteq N$, then $A\subseteq \operatorname{Ann}_P(N)$.
\end{lem}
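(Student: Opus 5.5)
The idea is to exploit the minimality of $A$ against the descending chain of ideals $A\cdot N+[A,N]$, which can only shrink because $N$ is nilpotent.

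First I will check that $B:=[A,N]+A\cdot N$ is an ideal of $P$ contained in $A$. Containment in $A$ is immediate because $A$ is an ideal. By Proposition~\ref{idealpro2}, $[A,N]$ is an ideal of $P$. For $A\cdot N$ it suffices to compute $[A\cdot N,P]$ and $(A\cdot N)\cdot P$. The second is $A\cdot(N\cdot P)\subseteq A\cdot N$ by associativity and commutativity. For the first, Eq.~\eqref{transposedpoissonlu} with $z=a\in A$, $x=n\in N$, $y=p\in P$ gives
\[
[a\cdot n,p]=2a\cdot[n,p]-[n,a\cdot p]\in A\cdot N+[A,N],
\]
since $[n,p]\in N$ and $a\cdot p\in A$. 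Hence $B$ is an ideal of $P$ with $B\subseteq A$. By minimality of $A$, either $B=0$ or $B=A$.

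Next I rule out $B=A$. Suppose $A=[A,N]+A\cdot N$. I will show by induction that $A\subseteq N^{k}$ for all $k$. The base case $A\subseteq N=N^{0}$ is given. For the inductive step, assume $A\subseteq N^{k}$. Then
\[
A=[A,N]+N\cdot A\subseteq [N^{k},N]+N\cdot N^{k}=N^{k+1}
\]
by Proposition~\ref{nildefi1}, using commutativity and skew-symmetry. Since $N$ is nilpotent, this forces $A=0$, contradicting that a minimal ideal is nonzero. Hence $B=0$, that is, $[A,N]=0$ and $A\cdot N=0$. By commutativity and skew-symmetry this also gives $N\cdot A=[N,A]=0$, so $A\subseteq\operatorname{Ann}_P(N)$.

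The main obstacle I expect is verifying that $A\cdot N$, and hence $B$, is closed under bracketing with $P$, since products $A\cdot N$ alone are not ideals in general (see the remark after Proposition~\ref{idealpro2}). The point is that the bracket term $[A,N]$ is included in $B$ to absorb the error term $[n,a\cdot p]$. If instead one tried to show that $A\cdot N$ or $[A,N]$ separately is an ideal, the argument would fail.
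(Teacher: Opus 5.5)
Your proof is correct, but it is organized differently from the paper's. You put both products into the single ideal $[A,N]+A\cdot N$ and dispose of it with one appeal to minimality and the dialgebra nilpotency of $N$. The error term $[n,a\cdot p]$ coming from Eq.~\eqref{transposedpoissonlu} is simply absorbed into $[A,N]$.

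The paper works in two stages. First, $[A,N]$ is an ideal by Proposition~\ref{idealpro2}, so it equals $0$ or $A$, and Lie nilpotency of $N$ alone forces $[A,N]=0$. Then, since $[N,A]=0$, the same error term vanishes, so $A\cdot N$ is itself an ideal, and associative nilpotency of $N$ forces $A\cdot N=0$.

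Your route needs only one minimality argument and no particular order of steps. The paper's route isolates the fact that $[A,N]=0$ uses only Lie nilpotency of $N$. That by-product is exactly what the proof of Theorem~\ref{Lienil-assonil} invokes: there, applied to a $P$ that is merely Lie nilpotent, it gives $B\subseteq\operatorname{Ann}_L(P)$. Your combined argument does not deliver this, because for a Lie nilpotent but not associative nilpotent $N$ one cannot rule out $[A,N]+A\cdot N=A$. For instance, take $N=P={\bf k}e$ with $e\cdot e=e$ and trivial bracket.

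Your closing remark is inaccurate for the same reason. $[A,N]$ alone is always an ideal; you cite Proposition~\ref{idealpro2} for this yourself. $A\cdot N$ alone is an ideal once $[A,N]=0$ is known. So the separated argument does work when carried out in that order.

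A minor point: Proposition~\ref{nildefi1} is not needed in your induction. The inclusion $[N^k,N]+N^k\cdot N\subseteq N^{k+1}$ is just the $i=k$ summand in the definition of the lower central series.
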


\begin{proof}
By Proposition~\ref{idealpro2}, $[A,N]$ is an ideal of $P$. Since $[A,N]\subseteq A$, the minimality of $A$ implies that either $[A,N]=A$ or $[A,N]=0$. Because $N$ is Lie nilpotent, we must have $[A,N]=0$.

Moreover, since $(A\cdot N)\cdot P = A\cdot(N\cdot P) = A\cdot N$ and
\[
[A\cdot N,P] \subseteq A\cdot [N,P] + [N,A\cdot P] \subseteq A\cdot N + [N,A] \subseteq A\cdot N,
\]
it follows that $A\cdot N$ is also an ideal of $P$ contained in $A$. Thus, by the minimality of $A$, either $A\cdot N=0$ or $A\cdot N=A$. Since $N$ is associative nilpotent, we conclude that $A\cdot N=0$. Hence, $A\subseteq \operatorname{Ann}_P(N)$.
\end{proof}

\begin{thm}\label{Lienil-assonil}
Let $(P,\cdot,[\cdot,\cdot])$ be a finite-dimensional Lie nilpotent transposed Poisson algebra. Then $[P,P]$ is a nilpotent ideal.
\end{thm}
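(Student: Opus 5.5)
By Proposition~\ref{idealpro2} (with $B=C=P$), $I:=[P,P]$ is an ideal of $P$. Applying Theorem~\ref{Engelthm} to $I$, it suffices to show that $I_L$ and $I_A$ are nilpotent. The first is immediate, since $I_L$ is a subalgebra of the nilpotent Lie algebra $P_L$. The whole task is therefore to show that $(I,\cdot)$ is a nilpotent commutative associative algebra. Because $I$ is finite-dimensional, it is enough to show that every element of $I$ is nilpotent for $\cdot$, which I will prove by contradiction.

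If $(I,\cdot)$ is not nil, I take $x\in I$ with $P_x|_I$ not nilpotent. The Fitting decomposition of $P_x$ on the subalgebra $x\cdot\mathbf{k}[x]\subseteq I$ then gives a nonzero idempotent $e\in I$. I then pass to $B:=e\cdot P$, which is stable under both products. It is a subalgebra under $\cdot$, and under $[\cdot,\cdot]$ by Proposition~\ref{idempotentlpro}, since $[e\cdot x,e\cdot y]=e\cdot[x,y]$. So $B$ is a transposed Poisson algebra with unit $e$, and $B_L$ is nilpotent. Moreover $e=e\cdot e\in e\cdot[P,P]=[e\cdot P,e\cdot P]=[B,B]$, again by Proposition~\ref{idempotentlpro}.

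In the unital algebra $B$ I identify the bracket by setting $D:=Q_e|_B$. Putting $x=e$ in \eqref{transposedpoissonlu} gives $[z,y]=2z\cdot D(y)-D(z\cdot y)$. Adding this to the same identity with $y,z$ swapped and using antisymmetry shows that $D$ is a derivation of $(B,\cdot)$ and that $[z,y]=z\cdot D(y)-D(z)\cdot y$. So $B$ is exactly of the form \eqref{D-construct}. Lie nilpotency of $P$ gives that $D=Q_e|_B$ is nilpotent, and $[B,B]\subseteq B\cdot D(B)$.

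The remaining and main step is to show that the ideal $B\cdot D(B)$ of the unital algebra $B$ cannot contain the unit $e$, i.e.\ that it consists of nilpotent elements. First I would decompose $B$ by its primitive idempotents. A derivation kills idempotents, since $D(f)=D(f^2)=2f\cdot D(f)$ forces $D(f)=0$; hence $D$ preserves each local summand $B_i$, and it suffices to show $D(B_i)\subseteq\operatorname{rad}(B_i)$. Here I would first try a trace argument: $P_{D(c)}=[D,P_c]$ has trace zero, and $D$ is nilpotent. Together these should force $D(c)$ to have no unit component in each local factor, at least when the residue fields are separable and the characteristic is large or zero. If that route is not clean, I would fall back on using $e\in[B,B]=B^1_L$ directly: iterating $[e,\cdot]=D$ and the identity $Q_e=P_eQ_e$ (Corollary~\ref{idempotentcor}), I would aim to show that $e\in[B,B]$ forces $D$ to be non-nilpotent on $B$. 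Either way this yields a contradiction, so $I_A$ is nil, hence nilpotent, and $[P,P]$ is a nilpotent ideal. I expect this step, controlling the image of a nilpotent derivation of a unital commutative algebra, to be the real obstacle, and the only place where hypotheses on $\mathbf{k}$ might matter.
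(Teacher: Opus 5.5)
Your reduction is sound, but the step you yourself call the main one is not proved, and the routes you sketch for it cannot work in the generality of the statement. The parts that are correct: $I=[P,P]$ is an ideal, it suffices to show $I_A$ is nilpotent, a non-nil $I_A$ contains a nonzero idempotent $e$, and $B=e\cdot P$ is a unital transposed Poisson algebra with $B_L$ nilpotent and $e=e\cdot e\in e\cdot[P,P]\subseteq[B,B]$. The Witt-type formula $[z,y]=z\cdot D(y)-D(z)\cdot y$ is also right.

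The trouble is that your routes use only the nilpotency of the single operator $D=Q_e|_B$. The paper works over any field of characteristic $\neq 2$, and there that is not enough. Take $\operatorname{char}\mathbf{k}=p\ge 3$, $B=\mathbf{k}[t]/(t^p)$ and $D=d/dt$. Then $D$ is a derivation with $D^p=0$, yet $D(t)=1$, so $B\cdot D(B)=B$, and $1=[1,t]\in[B,B]$. Hence neither ``$D(B_i)\subseteq\operatorname{rad}(B_i)$'' nor ``$e\in[B,B]$ forces $D$ to be non-nilpotent'' holds. In characteristic $0$ the step is true, even for arbitrary derivations: a Seidenberg-type argument shows $D$ preserves the nilradical, and derivations vanish on separable residue fields. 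But you did not carry this out, and it would give the theorem only in characteristic $0$ or large characteristic. This example is not Lie nilpotent ($[t,1]=-1$), which is exactly the point: you must use nilpotency of all of $B_L$, not just of $Q_e$.

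The missing idea is the Lie center. Since $B_L$ is nilpotent and $B\neq 0$, the center $Z:=\{z\in B\mid [z,B]=0\}$ is nonzero. Take $z\in Z$ and $x,y\in B$. Then \eqref{transposedpoissonlu} gives $2x\cdot[z,y]=[x\cdot z,y]+[z,x\cdot y]$, which forces $x\cdot z\in Z$. Using this, $2z\cdot[x,y]=[z\cdot x,y]+[x,z\cdot y]=0$, so $Z\cdot[B,B]=0$. As $e\in[B,B]$ is the unit of $B$, $Z=Z\cdot e=0$, a contradiction. This closes your argument in a few lines, with no induction and no need for $D$.

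The paper uses the same identity ($\operatorname{Ann}_L(P)\cdot[P,P]=0$) but organizes it as an induction on $\dim P$. A minimal ideal $B'\subseteq[P,P]$ is central by Lie nilpotency, since $[B',P]$ is an ideal inside $B'$. Hence $[P,P]\cdot B'=0$. Applying the induction hypothesis to $P/B'$ gives $[P,P]_A^m\subseteq B'$, so $[P,P]_A^{m+1}=0$.
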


\begin{proof}
We prove it by induction on $\operatorname{dim}(P)$. If $\operatorname{dim}(P)=1$, then $[P,P]=0$ and the conclusion holds trivially. Assume that the conclusion holds for all Lie nilpotent transposed Poisson algebras of dimensions at most $k$. Suppose that $\operatorname{dim}(P)=k+1$.\par
If $[P,P]=0$, there is nothing to prove. Otherwise, by Proposition~\ref{idealpro2}, there exists a minimal ideal $B$ of $P$ contained in $[P,P]$. Consider the quotient $[P,P]/B$. By the induction hypothesis, $[P,P]/B$ is nilpotent. Hence there exists a non-negative integer $m$ such that $[P,P]_A^m\subseteq B$. By the proof of Lemma~\ref{minimalideal-Annlem}, we have $B\subseteq \operatorname{Ann}_L(P)$. Therefore, we obtain
\begin{eqnarray*}
[P,P]_A^{m+1}= [P,P]\cdot [P,P]_A^m\subseteq [P,P]\cdot B\subseteq [B\cdot P,P]\subseteq [B,P]=0.
\end{eqnarray*}
Thus $[P,P]$ is associative nilpotent. Hence it is nilpotent by Theorem~\ref{Engelthm}. This completes the induction and the proof.
\end{proof}

\begin{rmk}
For a transposed Poisson algebra $(P,\cdot,[\cdot,\cdot])$, Eq.~\eqref{transposedpoissonlu} and Proposition~\ref{idealpro3} lead to the inclusion $\operatorname{Ann}_L(P)\subseteq \operatorname{Ann}_A([P,P])$ directly. Thus, if $\operatorname{Ann}_L(P)\neq 0$, then $\operatorname{Ann}_A([P,P])\neq 0$. In fact, Theorem \ref{Lienil-assonil} gives a stronger statement: for a finite-dimensional Lie nilpotent transposed Poisson algebra $(P,\cdot,[\cdot,\cdot])$ ($\operatorname{Ann}_L(P)\neq 0$),
$[P,P]$ is a nilpotent ideal, which means that $\operatorname{Ann}_A([P,P])\neq 0$.

\end{rmk}

\begin{cor}\label{N=N_A}
Let $(P,\cdot,[\cdot,\cdot])$ be a finite-dimensional Lie nilpotent transposed Poisson algebra. Then $Nil_A(P)$ is a nilpotent ideal. Therefore, $Nil(P)=Nil_A(P)$.
\end{cor}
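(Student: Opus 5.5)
We need to show that $Nil_A(P)$, the nilpotent radical of $P_A$, is an ideal of $P$ and is nilpotent as a transposed Poisson algebra; maximality then gives $Nil(P)=Nil_A(P)$.

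\emph{Step 1: $Nil_A(P)$ is a Lie ideal.} Write $N=Nil_A(P)$, an ideal of $P_A$ with $N_A^r=0$ for some $r$. By Theorem~\ref{Lienil-assonil}, $[P,P]$ is a nilpotent ideal of $P$. In particular $[P,P]$ is an associative nilpotent ideal of $P_A$, so $[P,P]\subseteq N$. Hence $[N,P]\subseteq [P,P]\subseteq N$. Together with $N\cdot P\subseteq N$, this shows that $N$ is an ideal of the transposed Poisson algebra $P$.

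\emph{Step 2: $N$ is nilpotent.} By Theorem~\ref{Engelthm}, applied to the transposed Poisson algebra $N$, it suffices to check two things. First, $N_A$ is nilpotent, which holds by definition. Second, $N_L$ is nilpotent, which holds because $N_L$ is a Lie subalgebra of the nilpotent Lie algebra $P_L$, so $N_L^k\subseteq P_L^k$. Therefore $N$ is a nilpotent ideal of $P$.

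\emph{Step 3: equality.} Since $N$ is a nilpotent ideal, $N\subseteq Nil(P)$. Conversely, $Nil(P)$ is a nilpotent ideal of $P$, so it is associative nilpotent and an ideal of $P_A$. Hence $Nil(P)\subseteq Nil_A(P)$, and so $Nil(P)=Nil_A(P)$.

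The only substantive point is the inclusion $[P,P]\subseteq Nil_A(P)$ in Step 1. It depends on Theorem~\ref{Lienil-assonil}, which gives that $[P,P]$ is associative nilpotent and an ideal (so in particular closed under multiplication by $P$). Everything else is formal.
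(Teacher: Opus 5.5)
Your proof is correct and matches the paper's: both use Theorem~\ref{Lienil-assonil} to get $[P,Nil_A(P)]\subseteq [P,P]\subseteq Nil_A(P)$, which makes $Nil_A(P)$ an ideal of $P$, and then deduce nilpotency and the equality. The only differences are minor: you place $[P,P]$ in $Nil_A(P)$ directly as an associative nilpotent ideal rather than passing through $Nil(P)$, and you make explicit the nilpotency step (via Theorem~\ref{Engelthm} and the nilpotency of $P_L$) that the paper leaves implicit.
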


\begin{proof}
Obviously, we have $Nil(P)\subseteq Nil_A(P)$.
By Theorem~\ref{Lienil-assonil}, we obtain
\begin{eqnarray*}
[P,Nil_A(P)]\subseteq [P,P]\subseteq Nil(P)\subseteq Nil_A(P).
\end{eqnarray*}
Thus $Nil_A(P)$ is a nilpotent ideal of $P$. Hence $Nil_A(P)\subseteq Nil(P)$. This completes the proof.
\end{proof}

\begin{ex}
By \cite{3-dimensional transposed}, $(P=\mathbb{C}e_1\oplus\mathbb{C}e_2\oplus\mathbb{C}e_3,\cdot,[\cdot,\cdot])$ is a transposed Poisson algebra with the non-zero commutative associative products and the non-zero Lie bracket given by
\begin{eqnarray*}
 && e_1 \cdot e_2 = e_1,\:e_2\cdot e_2=e_2,\:e_2\cdot e_3=e_3,\:  [e_1, e_2] = e_3
\end{eqnarray*}
It is easy to see that $P_L$ is nilpotent and  $Nil(P)=Nil_A(P)=\mathbb{C}e_1\oplus\mathbb{C}e_3$.
\end{ex}

\section{Frattini theory of transposed Poisson algebras }
In this section, we introduce the Frattini theory of transposed Poisson algebras.
\vspace{-.2cm}

\subsection{Frattini subalgebras and Frattini ideals of transposed Poisson algebras}
We first recall the definitions of Frattini subalgebras and Frattini ideals of a dialgebra.
\begin{defi}\cite{Frattini of dialgebras}
For a dialgebra $P$, the {\bf Frattini subalgebra} \( F(P) \) is defined as the intersection of all maximal subalgebras of \( P \), and the {\bf Frattini ideal} \( \phi(P) \) is the largest ideal of \( P \) contained in \( F(P) \).
\end{defi}
\begin{rmk}
For a transposed Poisson algebra $(P,\cdot,[\cdot,\cdot])$, if $P_A$ is nilpotent, we can construct a one-dimensional subalgebra of $P$ by choosing an element $x \in \operatorname{Ann}_A(P)$. If $P_A$ is not nilpotent, then we can construct a one-dimensional subalgebra of $P$ by taking an idempotent element in $P_A$. Consequently, for any transposed Poisson algebra of dimension greater than one, a non-trivial maximal subalgebra must exist.
\end{rmk}

We first establish basic properties of Frattini subalgebras and Frattini ideals in dialgebras, which correspond to those in Lie algebras~\cite{Frattini of Lie algebras, Lie2, Lie3}. The proofs follow arguments from group theory.

\begin{pro}\label{Fpro1}
Let \( A\) be a subalgebra of the dialgebra $(P,\cdot,[\cdot,\cdot])$. Then we have the following conclusions.
\begin{enumerate}
\item\label{upper bound} $F(P),\phi(P)\subseteq P^1$.
\item $J(P)\subseteq P^{1}$, where $J(P)$ is the intersection of all maximal ideals of $P$.
    \item If \( A + F(P) = P \), then we have \( A=P \).
    \item If $A$ is an ideal of $P$, then $A\not\subseteq F(P) $ if and only if there exists a proper subalgebra $B$ of $P$ such that $P=A+B$.
\end{enumerate}
\end{pro}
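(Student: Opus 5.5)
The plan is to work directly from the definitions, using only the fact that $P^1=P\cdot P+[P,P]$. The key observation is that any linear subspace $M$ of $P$ with $P^1\subseteq M$ is automatically a subalgebra, and in fact an ideal, because
\[
M\cdot M+[M,M]+M\cdot P+P\cdot M+[M,P]+[P,M]\subseteq P^1\subseteq M .
\]
Throughout, I take $P$ finite-dimensional, as in the rest of the paper, so that every proper subalgebra lies in a maximal one.

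For (a) and (b): if $P^1=P$ there is nothing to prove. Otherwise, take any $x\notin P^1$ and choose a hyperplane $H$ of $P$ with $P^1\subseteq H$ and $x\notin H$. By the observation, $H$ is a subalgebra and an ideal. Since it has codimension one and is proper, it is both a maximal subalgebra and a maximal ideal. Hence $x\notin F(P)$ and $x\notin J(P)$. The intersection of all such hyperplanes is exactly $P^1$, so $F(P)\subseteq P^1$ and $J(P)\subseteq P^1$. Finally, $\phi(P)\subseteq F(P)$ by definition, which gives $\phi(P)\subseteq P^1$.

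For (c), the standard non-generator argument applies. Suppose $A\neq P$. By finite-dimensionality, $A$ is contained in some maximal subalgebra $M$. Since $F(P)\subseteq M$ by definition, we get $A+F(P)\subseteq M\neq P$, which contradicts $A+F(P)=P$.

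For (d), suppose first that $A\not\subseteq F(P)$. Then there is a maximal subalgebra $M$ with $A\not\subseteq M$. Because $A$ is an ideal, $A+M$ is a subalgebra: the cross terms $A\cdot M$, $M\cdot A$, $[A,M]$ and $[M,A]$ all lie in $A$. This subalgebra strictly contains $M$, so by maximality $A+M=P$, and we take $B=M$. Conversely, suppose $P=A+B$ with $B$ a proper subalgebra, and assume for contradiction that $A\subseteq F(P)$. Then $P=B+F(P)$, so (c) forces $B=P$, a contradiction.

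The only delicate points are two. One is making sure the hyperplanes in (a)/(b) are genuinely maximal, which is immediate from codimension one. The other is the existence of maximal subalgebras containing a given proper subalgebra in (c), which relies on finite-dimensionality. Neither should be a real obstacle.
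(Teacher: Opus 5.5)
Your proposal is correct and matches the paper: for (a) the paper uses exactly your argument, a codimension-one subspace containing $P^1$ but not $x$, which is automatically a maximal subalgebra. The paper dismisses (b)--(d) as straightforward, and your hyperplane, non-generator, and $A+M$ arguments fill them in correctly; your explicit finite-dimensionality assumption for (c), and hence for the converse in (d), is appropriate, since the paper leaves it implicit.
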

\begin{proof}
They are straightforward. Here we only show the proof of \eqref{upper bound}. If $F(P)\not\subseteq P^1$, then there exists $x\in F(P)$ but $x\notin P^1$. We can construct a subspace $V$ of $P$ of codimension 1 that does not contain $x$. But $V$ is a maximal subalgebra of $(P,\cdot,[\cdot,\cdot])$ since $P^1\subseteq V$, which contradicts with $x\in F(P)$.
\end{proof}

Proposition~\ref{Fpro1} provides an upper bound for $F(P)$ and $\phi(P)$ with respect to the inclusion. Next, we give a lower bound for $F(P)$. \delete{Moreover, all conclusions can become equalities for some certain transposed Poisson algebras.}

\begin{pro}\label{F(P)lower bounds}
Let $(P,\cdot,[\cdot,\cdot])$ be a transposed Poisson algebra. Then we have $Ann_P(P)\cap P^1\subseteq F(P)$.
\end{pro}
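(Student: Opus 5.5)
We show that every maximal subalgebra $M$ of $P$ contains $\operatorname{Ann}_P(P)\cap P^1$. This gives the inclusion, since $F(P)$ is the intersection of all maximal subalgebras.

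Fix a maximal subalgebra $M$ and put $Z=\operatorname{Ann}_P(P)$. Because $Z$ annihilates everything, every subspace of $Z$ is an ideal of $P$. Suppose $Z\cap P^1\not\subseteq M$, and pick $z\in Z\cap P^1$ with $z\notin M$.

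First we check that $M+{\bf k}z$ is a subalgebra. It is closed under both operations because
\[
(M+{\bf k}z)\cdot(M+{\bf k}z)\subseteq M\cdot M+z\cdot P=M\cdot M\subseteq M,
\]
and likewise for the bracket, since $[z,P]=0$. By maximality of $M$ we get $M+{\bf k}z=P$, so $P=M\dot{+}{\bf k}z$.

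Next we compute $P^1$. Using $P^1=P\cdot P+[P,P]$ and expanding,
\[
P^1=(M+{\bf k}z)\cdot(M+{\bf k}z)+[M+{\bf k}z,M+{\bf k}z]=M\cdot M+[M,M]\subseteq M,
\]
because every term involving $z$ vanishes. But $z\in P^1$ and $z\notin M$, a contradiction. Hence $Z\cap P^1\subseteq M$ for every maximal subalgebra $M$, and therefore $Z\cap P^1\subseteq F(P)$.

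If $P$ has no maximal subalgebras, then $F(P)=P$ by the convention for an empty intersection, and the inclusion holds trivially. There is no real obstacle here. The only point needing care is that $M+{\bf k}z$ is a subalgebra, and that follows directly from $z$ annihilating $P$; the transposed Poisson identity is not needed at all.
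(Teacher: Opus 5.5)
Your proof is correct and follows essentially the same route as the paper. Both arguments assume some $x\in\operatorname{Ann}_P(P)\cap P^1$ lies outside a maximal subalgebra $M$, use that $x$ annihilates $P$ to get $P=M\dot{+}{\bf k}x$, and contradict $x\in P^1$. Your direct computation $P^1=M\cdot M+[M,M]\subseteq M$ finishes more cleanly than the paper's explicit expression of $x$ as a sum of products combined with a projection onto $\operatorname{Ann}_P(P)$. You are also right that the transposed Poisson identity is never used.
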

\begin{proof}
When $Ann_P(P)\cap P^1=0$, the conclusion holds obviously.\par
Now, we suppose that $Ann_P(P)\cap P^1\neq 0$. For any non-zero element $x\in Ann_P(P)\cap P^1$, it can be written in the form $x=\sum_{i\in I}[r_i,s_i]+\sum_{j\in J}r'_j\cdot s'_j$ or $x=\sum_{i\in I}[r_i,s_i]$ or $x=\sum_{j\in J}r'_j\cdot s'_j$, where $r_i,s_i,r'_j,s'_j\in P\setminus Ann_P(P)$. Thus $r_i,s_i,r'_j,s'_j \notin {\bf k}x$ for each $i\in I,j\in J$.  Moreover, we can also assume that $\pi(r_i)=0$, $\pi(s_i)=0$, $\pi(r'_j)=0$ and $\pi(s'_j)=0$, where $\pi: P\rightarrow Ann_P(P)$ is the linear projection.  \par
Here we only consider the first form. If $x\notin F(P)$, we can choose a maximal subalgebra $M$ of $P$ which does not contain $x$. Therefore, not all $r_i,s_i,r'_j,s'_j$ are in $M$. Without loss of generality, we assume that $r_k\notin M$. Now we consider the subalgebra $\langle M,x\rangle$, which is generated by $M$ and $x$. Obviously, $P=\langle M,x\rangle=M\oplus{\bf k}x$ since $M$ is a maximal subalgebra and $x\in Ann_P(P)$, which contradicts the choice of $r_k$. Hence $x\in F(P)$. The other two forms are similar. Thus $Ann_P(P)\cap P^1\subseteq F(P)$. This completes the proof.
\end{proof}

\begin{ex}\label{ex0}
By \cite{3-dimensional transposed}, $(P=\mathbb{C}e_1\oplus\mathbb{C}e_2\oplus\mathbb{C}e_3,\cdot,[\cdot,\cdot])$ is a transposed Poisson algebra with the non-zero commutative associative product and the non-zero Lie bracket given by
\begin{eqnarray*}
 &&e_2 \cdot e_2 = e_3,\quad [e_1, e_2] = e_3.
\end{eqnarray*}
It is easy to see that \delete{$P$ is nilpotent and $F(P)=\phi(P)=Nil(P)^1=\mathbb{C}e_3$. Hence, we obtain that
\begin{eqnarray*}
&&Nil(P)^1 = Nil(P) \cap F(P) =P\cap \mathbb{C}e_3=\mathbb{C}e_3= \phi(P).
\end{eqnarray*}}
$Ann_P(P)\cap P^1=F(P)=\phi(P)=P^1=\mathbb{C}e_3$.
Thus, this example demonstrates that the upper bound and the lower bound for $F(P)$ may coincide.
\end{ex}

It was shown in \cite[Corollary 4.2]{Frattini of dialgebras} that the Frattini ideal of a Poisson algebra is  nilpotent. Next, we obtain a similar result for transposed Poisson algebras.

\begin{lem}\label{Frattiniideallem}
Let $(P,\cdot,[\cdot,\cdot])$ be a finite-dimensional transposed Poisson algebra. Suppose that $A$ is a subideal of $P$ and $B$ is an ideal of $A$ contained in $F(P)$. If the quotient $A/B$ is associative nilpotent, then $A$ is also associative nilpotent.
\end{lem}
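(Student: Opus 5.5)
We follow Towers' argument for dialgebras and the classical Lie-algebra argument that a Frattini-type ideal is nilpotent, adapted to the associative part. The key input is Fitting-type decomposition with respect to the commutative associative structure.

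Suppose $A/B$ is associative nilpotent but $A$ is not. Then $A_A$ (the commutative associative algebra $(A,\cdot)$, finite-dimensional) is not nilpotent. By the standard theory of finite-dimensional commutative associative algebras, a non-nilpotent such algebra whose nil-quotient modulo $B$ is nilpotent must contain a non-zero idempotent $e\in A$. Since $A/B$ is nil, the image of $e$ is zero, so $e\in B\subseteq F(P)$. We will produce a proper subalgebra $M$ of $P$ with $M+F(P)=P$, i.e. essentially $P=M+{\bf k}\text{-span stuff in }B$. This contradicts Proposition~\ref{Fpro1}(c).

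The candidate is the Fitting null component of $P_e$. Since $P_e$ is idempotent ($P_e^2=P_{e\cdot e}=P_e$), we have the Peirce decomposition $P=P_0\dot{+}P_1$ with $P_1=e\cdot P$ and $P_0=\{x\mid e\cdot x=0\}$.
\begin{itemize}
\item $P_0$ is closed under $\cdot$ by associativity.
\item For the bracket, Proposition~\ref{idempotentlpro} gives $e\cdot[x,y]=[e\cdot x,e\cdot y]=0$ for $x,y\in P_0$, so $P_0$ is a subalgebra.
\item Likewise $P_1$ is a subalgebra: $e\cdot[e x,e y]=[e x,e y]$.
\end{itemize}

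Now we show $P_1\subseteq$ something inside $F(P)$ modulo $P_0$. Since $A$ is a subideal containing $e$, repeated multiplication gives $e\cdot P\subseteq$ the ideal chain. Specifically, with $A=A_0\lhd A_1\lhd\cdots\lhd A_r=P$, we have $e\cdot P=e\cdot(e\cdot P)=e\cdot(e\cdot(\cdots))$. Using $e\cdot A_{i+1}\subseteq A_i$ iteratively, $e\cdot P=e^{r}\cdot P\subseteq A$. Moreover $e\cdot P\subseteq e\cdot A$, and $e\cdot A$ should map into $B$ modulo terms killed by nilpotency of $A/B$. Indeed $e\cdot A=e^{m}\cdot A\subseteq (A_A)^m+B$-type containment, and $A_A^{m}\subseteq B$ for large $m$ since $A/B$ is associative nilpotent. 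Hence $P_1=e\cdot P\subseteq B\subseteq F(P)$.

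This yields $P=P_0+F(P)$, so Proposition~\ref{Fpro1}(c) forces $P_0=P$. Then $e=e\cdot e=0$, a contradiction. Hence $A$ is associative nilpotent.

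The main obstacle is the first step: extracting a nonzero idempotent of $A_A$ lying in $B$. We handle this by the Fitting decomposition of $A_A$ with respect to $P_a$ for $a\in A$ with $P_a$ non-nilpotent. Such an $a$ exists by the associative Engel theorem. The restriction of $P_a$ to its Fitting one-component $a^{N}\cdot A$ is invertible, which gives the idempotent $e$ as a polynomial in $a$ without constant term (standard). Care is needed that $e\in A$ (true, since it is a polynomial in $a$ with zero constant term) and that $e\in B$. For the latter, $e=e^{m}\in A_A^m\subseteq B$.
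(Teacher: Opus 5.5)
Your proof is correct. It runs on the same mechanism as the paper's: write $P$ as a subalgebra plus a piece lying in $B\subseteq F(P)$, then apply Proposition~\ref{Fpro1}(c). The vehicle is different, though.

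The paper argues directly, one element at a time. For each $a\in A$, the subideal chain and the associative nilpotency of $A/B$ give $P_a^{r+s}(P)\subseteq P_a^{s}(A)\subseteq B\subseteq F(P)$. The Fitting decomposition $P=P_a^{r+s}(P)+E_P^A(a)$, with $E_P^A(a)$ an ideal by Proposition~\ref{idealpro3}, then forces $E_P^A(a)=P$. So every $P_a$ with $a\in A$ is nilpotent, and $A$ is associative nilpotent without any contradiction argument.

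You instead argue by contradiction: you extract one nonzero idempotent $e$, note that it must lie in $B$, and use the Peirce decomposition $P=\{x\mid e\cdot x=0\}\dot{+}e\cdot P$. The two decompositions are really the same object. If $e$ is the idempotent attached to $P_a$ (a polynomial in $a$ without constant term), then $E_P^A(a)=\{x\mid e\cdot x=0\}$ and $P_a^{N}(P)=e\cdot P$ for large $N$.

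What your route buys is an explicit proof of the one nontrivial point: the null component is closed under the bracket, which you get from $e\cdot[x,y]=[e\cdot x,e\cdot y]$ (Proposition~\ref{idempotentlpro}). The paper delegates this to Proposition~\ref{idealpro3} as ``straightforward''. However, the Lie-closure of $E_P^A(a)$ is not an immediate consequence of iterating Eq.~\eqref{transposedpoissonlu}, because the low-order terms $[P_a^{i}x,P_a^{n-i}y]$ do not visibly vanish. It is most cleanly proved by exactly your idempotent observation. The paper's route is shorter once that proposition is granted.

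Two small simplifications to your write-up. First, $e\cdot A\subseteq B$ is immediate from $e\in B$ and $B$ being an ideal of $A$, so the detour through $A_A^{m}$ is unnecessary. Second, you never need that $e\cdot P$ is a subalgebra.
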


\begin{proof}
Since $A$ is a subideal of $P$, there exists a chain of subalgebras of $P$ as follows:
\begin{eqnarray*}
&&0=A_0\subset A_1 \subset...\subset A_r=P,
\end{eqnarray*}
where $A_i$ is an ideal of $A_{i+1}$. Because $A/B$ is associative nilpotent, there exists a non-negative integer $s$ such that for every $a\in A$,
\begin{eqnarray*}
&&P_a^{r+s}(P)\subseteq P_a^{s}(A)\subseteq B\subseteq F(P).
\end{eqnarray*}
Consider the Fitting decomposition $P=P_a^{r+s}(P)+E_{P}^A(a)$. So we have $E_{P}^A(a)=P$ by Propositions~\ref{idealpro3} and \ref{Fpro1}. \delete{Otherwise, there would exist a maximal subalgebra $M$ containing $E_{P}^L(a)$, and then
\begin{eqnarray*}
&&P=P_a^{r+s}(P)+E_{P}^L(a)\subseteq F(P)+M\subseteq M,
\end{eqnarray*}
which is a contradiction. }Hence, $A$ is associative nilpotent.
\end{proof}

\begin{thm}\label{Frattiniidealthm}
Let $(P,\cdot,[\cdot,\cdot])$ be a finite-dimensional transposed Poisson algebra. Then $\phi(P)$ is associative nilpotent.
\end{thm}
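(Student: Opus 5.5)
The plan is to apply Lemma~\ref{Frattiniideallem} directly, with $A=B=\phi(P)$. Since $\phi(P)$ is an ideal of $P$, it is a subideal, via the chain $0\subset \phi(P)\subset P$ in which each term is an ideal of the next. Moreover $\phi(P)$ is an ideal of itself and, by definition, $\phi(P)\subseteq F(P)$. The quotient $\phi(P)/\phi(P)=0$ is trivially associative nilpotent. The lemma then gives that $\phi(P)$ is associative nilpotent.

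Because this reduction is so immediate, the real content sits in the lemma. I would therefore double-check that the lemma's argument applies in this degenerate case, where $B=A$.

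\textbf{Uniform bound.} For $a\in\phi(P)$ we have $P_a(P)\subseteq \phi(P)$, since $\phi(P)$ is an ideal. Hence $P_a^{k}(P)\subseteq \phi(P)\subseteq F(P)$ for every $k\ge 1$.

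\textbf{Fitting decomposition.} Finite-dimensionality gives, for $n$ large, $P=P_a^{n}(P)\dot{+}E_P^A(a)$. By Proposition~\ref{idealpro3}, $E_P^A(a)$ is an ideal, and in particular a subalgebra. Then
\[
P=E_P^A(a)+P_a^n(P)\subseteq E_P^A(a)+F(P),
\]
so Proposition~\ref{Fpro1}(c) forces $E_P^A(a)=P$. Thus $P_a$ is nilpotent on $P$, and in particular on $\phi(P)$.

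\textbf{From elements to the algebra.} We now know every $a\in\phi(P)$ acts nilpotently by multiplication on the finite-dimensional commutative associative algebra $\phi(P)$. Since $\phi(P)$ is commutative, a sum of nilpotent elements is nilpotent, and every element of $\phi(P)$ is nilpotent. In finite dimensions this makes $\phi(P)$ a nil, hence nilpotent, commutative associative algebra. This is the standard fact already invoked in Corollary~\ref{realEngelthm}.

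The main point needing care is the use of Proposition~\ref{Fpro1}(c), which requires $E_P^A(a)$ to be a subalgebra; Proposition~\ref{idealpro3} supplies exactly this. Once that holds, the theorem follows at once from Lemma~\ref{Frattiniideallem}.
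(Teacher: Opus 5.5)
Your proposal is correct and is exactly the paper's proof: take $A=B=\phi(P)$ in Lemma~\ref{Frattiniideallem}. Your check of the lemma in this case follows the lemma's own proof (Fitting decomposition, $E_P^A(a)$ an ideal by Proposition~\ref{idealpro3}, then Proposition~\ref{Fpro1}(c)), and it also spells out the final step, nil implies nilpotent for $\phi(P)_A$, which the paper leaves implicit.
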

\begin{proof}
It follows by taking \(A = B = \varphi(P)\) in Lemma~\ref{Frattiniideallem} directly.
\end{proof}

\subsection{Frattini subalgebras of nilpotent transposed Poisson algebras}

Note that by \cite{Abelian subalgebra of Poisson},  all maximal subalgebras are ideals in a nilpotent Poisson algebra. We obtain an analogous result for nilpotent transposed Poisson algebras by using the normalizer.
\begin{lem}\label{normalizerlem}
Let $(P,\cdot,[\cdot,\cdot])$ be a transposed Poisson algebra and let $A$ be a subalgebra of $P$. Then the normalizer $N(A)$ is a subalgebra of $P$.
\end{lem}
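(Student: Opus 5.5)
The plan is a direct verification from the definitions. By commutativity of $\cdot$ and skew-symmetry of $[\cdot,\cdot]$, the normalizer simplifies to
\[
N(A)=\{x\in P\mid x\cdot A+[x,A]\subseteq A\}.
\]
This set is clearly a linear subspace of $P$, since the conditions $x\cdot a\in A$ and $[x,a]\in A$ are linear in $x$. So it suffices to fix $x,y\in N(A)$ and $a\in A$, and to show that the four elements $(x\cdot y)\cdot a$, $[x\cdot y,a]$, $[x,y]\cdot a$ and $[[x,y],a]$ all lie in $A$.

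The two "pure" cases follow from the axioms of each structure separately.
\begin{itemize}
\item Associativity and commutativity give $(x\cdot y)\cdot a=x\cdot(y\cdot a)$. Here $y\cdot a\in A$, and then $x\cdot(y\cdot a)\in A$ because $x\in N(A)$.
\item The Jacobi identity gives $[[x,y],a]=[[x,a],y]+[x,[y,a]]$. Both $[x,a]$ and $[y,a]$ lie in $A$, and bracketing again with $y$ or $x$ stays in $A$.
\end{itemize}

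The mixed cases are the only real point, and both are handled by Eq.~\eqref{transposedpoissonlu}, dividing by $2$ since $\mathrm{char}\,{\bf k}\neq 2$.
\begin{itemize}
\item For $[x,y]\cdot a$, apply Eq.~\eqref{transposedpoissonlu} with $z=a$:
\[
2a\cdot[x,y]=[a\cdot x,y]+[x,a\cdot y].
\]
Here $a\cdot x,\ a\cdot y\in A$, and bracketing with $y$ or $x$ keeps them in $A$. So $[x,y]\cdot a\in A$.
\item For $[x\cdot y,a]$, apply Eq.~\eqref{transposedpoissonlu} with $z=x$ and the pair $(y,a)$, which gives
\[
[x\cdot y,a]=2x\cdot[y,a]-[y,x\cdot a].
\]
Since $[y,a]\in A$, the term $x\cdot[y,a]$ lies in $A$. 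Since $x\cdot a\in A$, the term $[y,x\cdot a]$ lies in $A$. Hence $[x\cdot y,a]\in A$.
\end{itemize}

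Together these show $x\cdot y,\ [x,y]\in N(A)$, so $N(A)$ is a subalgebra of $P$.

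The main obstacle is the mixed term $[x\cdot y,a]$. The compatibility condition has to be applied with the right choice of $z$ so that every resulting term is a product of an element of $N(A)$ with an element of $A$. The choice $z=x$ above achieves this.
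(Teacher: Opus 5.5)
Your proof is correct and is essentially the paper's argument: it uses associativity and the Jacobi identity for $(x\cdot y)\cdot a$ and $[[x,y],a]$. It uses Eq.~\eqref{transposedpoissonlu} with $z=a$ for $[x,y]\cdot a$, and with $z=x$ on the pair $(y,a)$ for $[x\cdot y,a]$, exactly as in the paper's two inclusions.
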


\begin{proof}
For all $x,y\in N(A)$, by Eq.~\eqref{transposedpoissonlu}, we have
\begin{eqnarray*}
[x,y]\cdot A+\big[[x,y],A\big]&\subseteq& [x\cdot A,y]+[x,y\cdot A]+\big[[x,A],y\big]+\big[[y,A],x\big]\\
&\subseteq&[A,y]+[x,A]\\
&\subseteq& A
\end{eqnarray*}
and
\begin{eqnarray*}
(x\cdot y)\cdot A+[x\cdot y,A]&\subseteq&x\cdot(y\cdot A)+x\cdot[y,A]+[y,x\cdot A]\\
&\subseteq&x\cdot A+[y,A]\\
&\subseteq&A.
\end{eqnarray*}
Hence, $x\cdot y,[x,y]\in N(A)$. This completes the proof.
\end{proof}

\begin{thm}\label{nil-all}
Let $(P,\cdot,[\cdot,\cdot])$ be a nilpotent transposed Poisson algebra. Then all maximal subalgebras of $P$ are ideals and $F(P)=\phi(P)=J(P)$.
\end{thm}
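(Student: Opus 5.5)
The plan follows the classical "normalizers grow" argument for nilpotent groups and Lie algebras, using Lemma~\ref{normalizerlem} to know that normalizers are subalgebras. Let $M$ be a maximal subalgebra of $P$. Since $P$ is nilpotent, there is a smallest $m\geq 1$ with $P^{m}\subseteq M$; such an $m$ exists because $P^{n}=0$ for $n\gg 0$. If $m=1$, then $P^1\subseteq M$, and every subspace containing $P^1$ is an ideal, since $P\cdot M+[P,M]\subseteq P^1\subseteq M$. So assume $m\geq 2$, so that $P^{m-1}\not\subseteq M$.

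The key step is to show $P^{m-1}\subseteq N(M)$. By Proposition~\ref{nildefi1} and commutativity/skew-symmetry,
\begin{eqnarray*}
M\cdot P^{m-1}+P^{m-1}\cdot M+[M,P^{m-1}]+[P^{m-1},M]\subseteq P\cdot P^{m-1}+[P,P^{m-1}]=P^{m}\subseteq M.
\end{eqnarray*}
Hence $P^{m-1}\subseteq N(M)$. Clearly $M\subseteq N(M)$, and $N(M)$ is a subalgebra by Lemma~\ref{normalizerlem}. Therefore $M\subsetneq M+P^{m-1}\subseteq N(M)$. Maximality of $M$ then forces $N(M)=P$, which says exactly that $M$ is an ideal of $P$.

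For the equalities, first note that every maximal subalgebra is a maximal ideal. Indeed, it is an ideal, and any ideal strictly containing it is a subalgebra, hence equals $P$. Conversely, I must check that every maximal ideal $I$ is a maximal subalgebra.

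Take a maximal subalgebra $M\supseteq I$. This exists by finite-dimensionality, which the paper implicitly assumes; alternatively, pass to $P/I$. Then $M$ is an ideal containing $I$, so $M=I$ unless $M=P$. But $M$ is maximal, so $M\neq P$, and therefore $M=I$.

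It follows that the set of maximal subalgebras equals the set of maximal ideals, so $F(P)=J(P)$. Since $F(P)$ is an intersection of ideals, it is itself an ideal, and the largest ideal contained in $F(P)$ is $F(P)$; hence $\phi(P)=F(P)$.

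The main subtlety is the existence argument in the last step: showing a maximal ideal lies in some maximal subalgebra. This needs finite dimension, or an argument in $P/I$ using the first part applied to the nilpotent quotient. The nilpotent quotient $P/I$ has no nonzero proper ideals, so its maximal subalgebras, which are ideals, must be $0$. Hence $I$ is maximal as a subalgebra, provided $P/I\neq 0$ has some maximal subalgebra; this holds by the remark preceding Proposition~\ref{Fpro1}, since a proper subalgebra always exists. If $\dim P/I=1$, then $I$ is trivially maximal.
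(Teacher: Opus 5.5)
Your proof is correct and follows the paper's route. The paper cites the fact that normalizers of proper subalgebras grow in nilpotent dialgebras and combines it with Lemma~\ref{normalizerlem} to get $N(M)=P$; you prove that growth directly from $P^{m}=P\cdot P^{m-1}+[P,P^{m-1}]$, and you also supply the argument for $F(P)=\phi(P)=J(P)$ (maximal subalgebras coincide with maximal ideals), which the paper simply asserts.

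One small tightening for your last step. In the quotient you can avoid any question about the existence of maximal subalgebras, even in infinite dimension. Since $P/I\neq 0$ is nilpotent, $(P/I)^1$ is a proper ideal of $P/I$, hence zero. Every subspace of $P/I$ is then an ideal, which forces $\dim P/I=1$, so $I$ is a maximal subalgebra.
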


\begin{proof}
Since the normalizer of a proper subalgebra in a nilpotent dialgebra properly contains the subalgebra itself~\cite[Proposition 2.1]{Abelian subalgebra of Poisson}, it follows that for any maximal subalgebra $A$ of $P$, we have $N(A)=A$ by Lemma~\ref{normalizerlem}, that is, $A$ is an ideal of $P$. Thus, we obtain that $F(P)=\phi(P)=J(P)$.
\end{proof}
\vspa
Note that Example~\ref{ex0} is an example of Theorem~\ref{nil-all}. But the converse of Theorem~\ref{nil-all} is not true.
\begin{ex}\label{ex2}
By \cite{Bai}, $(P=\mathbb{C}e_1\oplus\mathbb{C}e_2,\cdot,[\cdot,\cdot])$ is a transposed Poisson algebra with the non-zero commutative associative product and the non-zero Lie bracket given by
\begin{eqnarray*}
&& e_1 \cdot e_1 = e_2,\quad [e_1, e_2] = e_2.
\end{eqnarray*}
It is easy to see that the only maximal subalgebra of $P$ is $\mathbb{C}e_2$, which is clearly an ideal. However, $P$ itself is not nilpotent.
\end{ex}

\vspa
\begin{thm}\label{allthm1}
Let $(P,\cdot,[\cdot,\cdot])$ be a finite-dimensional transposed Poisson algebra. If all maximal subalgebras of $P$ are ideals, then $P_A$ is nilpotent or $P={\bf k}e\oplus (1-e)\cdot P$, where $e$ is an idempotent of $P_A$ and $(1-e)\cdot P=\{x-x\cdot e | x\in P\}$.
\end{thm}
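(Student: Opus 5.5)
Assume $P_A$ is not nilpotent. Since $P$ is finite-dimensional and commutative associative, a non-nilpotent $P_A$ contains a nonzero idempotent $e$ (standard: some $x$ with $P_x$ not nilpotent gives an idempotent in the subalgebra generated by $x$ via the Fitting decomposition). I will use the Peirce decomposition $P=e\cdot P\dotplus (1-e)\cdot P$ with respect to $P_e$, which is a projection because $e\cdot e=e$. By Corollary~\ref{idempotentcor} and Proposition~\ref{idempotentlpro}, $e\cdot P$ is a Lie subalgebra and $Q_e$ maps $P$ into $e\cdot P$. The plan is to prove two things: $e\cdot P={\bf k}e$, and $(1-e)\cdot P$ is an ideal with $e\cdot(1-e)P=0$ and $[e,(1-e)P]=0$, so the sum is a direct sum of algebras.

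\textbf{Step 1: ${\bf k}e$ is a subalgebra, and maximal subalgebras either contain $e$ or give a complement.} Let $M$ be a maximal subalgebra. By hypothesis $M$ is an ideal, hence of codimension one in $P$ (any subspace between $M$ and $P$ is a subalgebra since $P\cdot P+[P,P]\subseteq$ ... — more precisely, $P/M$ is a dialgebra with no proper nonzero subalgebras, so it is one-dimensional). Thus $P/M$ is one-dimensional, and its product is either zero or, after scaling, given by $\bar u\cdot\bar u=\bar u$. In a one-dimensional transposed Poisson algebra the bracket vanishes. If $e\notin M$, then $\bar e$ is a nonzero idempotent of $P/M$, so $P/M\cong{\bf k}$ with unit product. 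If $e\in M$ for every maximal $M$, then $e\in F(P)\subseteq P^1$ (Proposition~\ref{Fpro1}). I would exclude this case by applying Theorem~\ref{Frattiniidealthm}-style reasoning: all maximal subalgebras are ideals, so $F(P)=J(P)$ is an ideal, hence $F(P)=\phi(P)$, which is associative nilpotent and cannot contain the idempotent $e$. So some maximal ideal $M$ satisfies $e\notin M$, giving $P=M\dotplus{\bf k}e$ with $P\cdot P\not\subseteq M$.

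\textbf{Step 2: identify $M$ with $(1-e)\cdot P$.} For $m\in M$, $e\cdot m\in M$, and $e\cdot m$ lies in the Peirce $1$-component. I want to show $e\cdot P={\bf k}e$, i.e. $e\cdot M=0$. This is the main obstacle. My first attempt: $e\cdot P$ is a subalgebra with unit $e$, and for a fixed maximal ideal $M$ avoiding $e$, the subspace $e\cdot M$ is an ideal of the unital algebra $e\cdot P$. I would then run the hypothesis on maximal subalgebras through $e\cdot P$: any maximal ideal of $P$ containing $(1-e)P+N$, for $N$ a codimension-one subspace of $e\cdot P$ containing $e\cdot P^1$, must be an ideal. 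From this, $e\cdot P/(e\cdot P)^1$ should be one-dimensional, and $e\cdot M\subseteq F(P)\cap eP$ should be nilpotent. But elements of $e\cdot M$ satisfy $e\cdot x=x$, so if they are associatively nilpotent in a unital-by-$e$ setting ... this alone does not force $x=0$. Instead I would use the nonunital trick: for $x\in e\cdot M$, the subspace $(1-e)P\dotplus{\bf k}(e+x)$ or a similar perturbation would be a subalgebra, and I would check whether it is a maximal subalgebra that is not an ideal unless $x=0$. This subalgebra-versus-ideal test is where the hypothesis bites, and I expect the computation of $(e+x)^2$ and $[e+x,(1-e)P]$ via Corollary~\ref{idempotentcor} to be the technical core.

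\textbf{Step 3: direct sum.} Once $e\cdot P={\bf k}e$, we have $[e,P]=Q_e(P)=P_eQ_e(P)\subseteq{\bf k}e$, and $[e,e]=0$. Also $[e,(1-e)y]$ lies in ${\bf k}e$ and is killed by applying Proposition~\ref{idempotentlpro}, since $e\cdot[e,(1-e)y]=[e,e\cdot(1-e)y]=0$. Therefore $[e,(1-e)P]=0$. Moreover, $(1-e)P$ is closed under products: $e\cdot((1-e)x\cdot(1-e)y)=0$, and likewise for brackets by Proposition~\ref{idempotentlpro}. This gives $P={\bf k}e\oplus(1-e)\cdot P$.
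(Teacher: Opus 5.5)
Your proposal has a genuine gap at Step 2, and that step is where the entire content of the theorem lies. Step 1 is essentially sound: a maximal subalgebra that is an ideal has codimension one. Moreover, when all maximal subalgebras are ideals, $F(P)=J(P)=\phi(P)$ is associative nilpotent by Theorem~\ref{Frattiniidealthm}, so some maximal ideal $M$ misses $e$. But this only gives $P=M\dot{+}{\bf k}e$ with $(1-e)\cdot P\subseteq M$. To conclude $M=(1-e)\cdot P$ you need $e\cdot P={\bf k}e$, and you explicitly leave this unproved.

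The tools you sketch for Step 2 do not work as stated.
\begin{itemize}
\item The inclusion $e\cdot M\subseteq F(P)$ is unjustified. If $M'$ is a maximal ideal with $e\notin M'$ and $m=m'+\lambda e$ with $m'\in M'$, then $e\cdot m\in M'$ only when $\lambda=0$.
\item The perturbed subspace $(1-e)\cdot P\dot{+}{\bf k}(e+x)$ with $0\neq x\in e\cdot M$ is in general not closed under the product. Indeed, $(e+x)^2=e+2x+x^2$ lies entirely in $e\cdot P$, so it would have to equal a multiple of $e+x$, which need not hold.
\end{itemize}

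The missing observation is that the facts you verify in Step 3 do not depend on $e\cdot P={\bf k}e$ at all.
\begin{itemize}
\item By Corollary~\ref{idempotentcor}, $[e,y-e\cdot y]=Q_e(y)-Q_eP_e(y)=0$.
\item Clearly $e\cdot(y-e\cdot y)=0$.
\item $(1-e)\cdot P$ is closed under the product, since $(x-e\cdot x)\cdot(y-e\cdot y)=x\cdot y-e\cdot(x\cdot y)$.
\item By Eq.~\eqref{transposedpoissonlu} and Proposition~\ref{idempotentlpro}, $[x-e\cdot x,y-e\cdot y]=[x,y]-2e\cdot[x,y]+e\cdot[x,y]=[x,y]-e\cdot[x,y]$, so $(1-e)\cdot P$ is also closed under the bracket.
\end{itemize}
Hence $S={\bf k}e\dot{+}(1-e)\cdot P$ is a subalgebra from the outset.

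Now apply the hypothesis directly. If $S\neq P$, then $S$ lies in a maximal subalgebra $M$, which is an ideal. Since $e\in M$, we get $e\cdot P\subseteq M$. Together with $(1-e)\cdot P\subseteq M$ and $x=e\cdot x+(x-e\cdot x)$, this gives $M=P$, a contradiction. So $S=P$, and $e\cdot P={\bf k}e$ drops out as a consequence rather than a prerequisite. This is the paper's argument, and it makes both your Frattini detour in Step 1 and the unfinished Step 2 unnecessary.
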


\begin{proof}
If $P_A$ is not nilpotent, then there exists an idempotent $e$ in $P_A$. We first claim that $(1-e)\cdot P$ and $A := \mathbf{k}e + (1-e)\cdot P\neq 0$ are subalgebras of $P$. By Proposition~\ref{idempotentlpro}, for all $x, y \in P$, we have
\begin{align*}
[x - e\cdot x, y - e\cdot y] &= [x,y] - [e\cdot x, y] - [x, e\cdot y] + [e\cdot x, e\cdot y] \\
&= [x,y] - 2e\cdot [x,y] + e\cdot [x,y] \\
&= [x,y] - e\cdot [x,y] \in (1-e)\cdot P.
\end{align*}
Moreover, by Corollary~\ref{idempotentcor}, we have $[{\bf k} e, y - e\cdot y] = {\bf k}[e, y] - {\bf k}[e, e\cdot y] = 0$. Also, $(x - e\cdot x)\cdot (y - e\cdot y) = x\cdot y - e\cdot x\cdot y \in (1-e)\cdot P$ and $(x - e\cdot x)\cdot e = 0$, for all $x,y\in P$. This proves the claim. Moreover, $\mathbf{k}e + (1-e)\cdot P$ holds as a direct sum not only of vector spaces, but also of algebras. \par
If $A \neq P$, then there exists a maximal subalgebra $M$ of $P$ containing $A$. By assumption, $M$ is an ideal of $P$. Consequently, for any $x \in P$, we have $x = e \cdot x + (x - e \cdot x) \in M$ for every $x \in P$, which implies $M = P$. This contradicts with that $M$ is a proper subalgebra. Therefore, we must have $A = \mathbf{k}e \oplus (1-e)\cdot P = P$. This completes the proof.
\end{proof}

\begin{rmk}\label{allthmremark1}
Under the assumption that every maximal subalgebra of $P$ is an ideal, $(1-e)\cdot P$ must be an ideal of $P$.
\end{rmk}
 Recall that an idempotent \( e \) is {\bf principal} if there is no idempotent \( u \) orthogonal to \( e \) (i.e., \( ue = eu = 0 \) with \( u^2 = u \neq 0 \)). If \( (P, \cdot) \) is not a nilpotent algebra, it has a principal idempotent element. In Theorem~\ref{allthm1}, if we take $e$ as a principal idempotent, we obtain the following corollary.
\begin{cor}\label{allcor1}
Let $(P,\cdot,[\cdot,\cdot])$ be a finite-dimensional Lie nilpotent transposed Poisson algebra. Then all maximal subalgebras of $P$ are ideals if and only if $P$ is nilpotent or $P={\bf k}e\oplus Nil(P)$, where $e$ is a principal idempotent.
\end{cor}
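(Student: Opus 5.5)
The plan is to prove the two directions separately, using Theorem~\ref{allthm1} and Corollary~\ref{N=N_A} for the forward direction and Theorem~\ref{nil-all} for the converse.

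\emph{Forward direction.} Assume every maximal subalgebra of $P$ is an ideal, and apply Theorem~\ref{allthm1}.

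First suppose $P_A$ is nilpotent. Since $P_L$ is nilpotent by hypothesis, Theorem~\ref{Engelthm} shows that $P$ is nilpotent.

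Otherwise $P_A$ is not nilpotent, so it has a principal idempotent $e$. The proof of Theorem~\ref{allthm1} works for an arbitrary idempotent, so we may run it with this principal $e$. This gives $P={\bf k}e\oplus (1-e)\cdot P$ as a direct sum of algebras, with $e\cdot (1-e)\cdot P=0$ and $[e,(1-e)\cdot P]=0$. Hence $N':=(1-e)\cdot P$ is an ideal of $P$; this is also Remark~\ref{allthmremark1}.

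Next I show that $N'$ is associative nilpotent. Suppose $u\in N'$ were a nonzero idempotent. Then $u\cdot e=0$, so $u$ would be orthogonal to $e$, contradicting that $e$ is principal. Thus the finite-dimensional commutative associative algebra $N'$ has no nonzero idempotents. It is therefore nil, and hence nilpotent. So $N'$ is an associative nilpotent ideal, and $N'\subseteq Nil_A(P)=Nil(P)$ by Corollary~\ref{N=N_A}.

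Finally, $Nil(P)$ is proper, because $e$ is not nilpotent. Since $N'$ has codimension one in $P$, this forces $N'=Nil(P)$, giving $P={\bf k}e\oplus Nil(P)$.

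\emph{Converse direction.} If $P$ is nilpotent, Theorem~\ref{nil-all} gives the claim. So assume $P={\bf k}e\oplus N$ with $N=Nil(P)$ and $e$ a principal idempotent; this is a direct sum of algebras, so $e\cdot N=[e,N]=0$. Let $M$ be a maximal subalgebra of $P$.

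If $N\subseteq M$, then $M=N$ by codimension, and $N$ is an ideal.

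Otherwise, $M+N$ is a subalgebra because $N$ is an ideal, so $M+N=P$ by maximality. Hence $M$ contains an element $m=e+n$ with $n\in N$. Since $e\cdot n=0$, we get $m^{j}=e+n^{j}$ for every $j\geq 1$. Because $N$ is associative nilpotent, $n^{j}=0$ for large $j$, so $e\in M$. It follows that $M={\bf k}e\oplus (M\cap N)$. Here $M\cap N$ is a subalgebra of codimension one in $N$, hence a maximal subalgebra of the nilpotent algebra $N$. By Theorem~\ref{nil-all}, $M\cap N$ is an ideal of $N$.

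It remains to check that $M$ is an ideal of $P$. Since $e$ annihilates $N$ under both products and $N$ normalizes $M\cap N$, we have $P\cdot M+[P,M]\subseteq {\bf k}e+(M\cap N)=M$.

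\emph{Main obstacle.} The step I expect to be most delicate is the power argument $m^{j}=e+n^{j}$ that places $e$ in $M$. The other steps are routine checks.
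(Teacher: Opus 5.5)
Your proof is correct and follows the paper's route in both directions. For the forward direction, you run Theorem~\ref{allthm1} with a principal idempotent and exclude idempotents in $(1-e)\cdot P$. For the converse, you reduce to maximal subalgebras of $Nil(P)$ and apply Theorem~\ref{nil-all}. You also supply two points the paper leaves implicit: the codimension count giving $(1-e)\cdot P=Nil(P)$, and the power argument $(e+n)^j=e+n^j$ showing that a maximal subalgebra $M\not\supseteq N$ contains $e$.

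One justification needs to be reversed. You have no a priori reason that $M\cap N$ has codimension one in $N$, since maximal subalgebras need not have codimension one. Instead, let $M_0$ be any subalgebra of $N$ with $M\cap N\subseteq M_0\subsetneq N$. Because $e\cdot N=[e,N]=0$, the space ${\bf k}e\oplus M_0$ is a proper subalgebra of $P$ containing $M$. Maximality of $M$ then forces $M_0=M\cap N$. So $M\cap N$ is maximal in $N$, and Theorem~\ref{nil-all} applies as you intended.
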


\begin{proof}
We first prove the necessity. If the associative algebra $P_A$ is nilpotent, then $P$ is nilpotent by Theorem~\ref{Engelthm}.
If $P_A$ is not nilpotent, we can choose a principal idempotent $e$ of $P_A$. By Theorem~\ref{allthm1}, we obtain that $P = {\bf k}e \oplus (1-e)\cdot P$, where $(1-e)\cdot P$ is an ideal of $P$ by Remark~\ref{allthmremark1}. We claim that $(1-e)\cdot P$ is associative nilpotent. If not, there exists an idempotent $e_1 \in (1-e)\cdot P$, i.e., $e_1 = (1-e)\cdot x$ for some $x \in P$. But then $e_1 \cdot e = (1-e)\cdot x \cdot e = 0$,
which contradicts with the fact that $e$ is a principal idempotent. Hence, $(1-e)\cdot P$ is associative nilpotent. By Theorem~\ref{Engelthm}, we obtain that $(1-e)\cdot P$ is nilpotent, so $(1-e)\cdot P=Nil(P)$. \par
Next, we prove the sufficiency. If $P_A$ is nilpotent, the conclusion follows by Theorem~\ref{nil-all}. Now suppose $P = {\bf k}e \oplus Nil(P)$. Then every maximal subalgebra of $P$ has the form $Nil(P)$ or ${\bf k}e \oplus M$, where $M$ is a maximal subalgebra of $Nil(P)$. Since $Nil(P)$ is nilpotent, $M$ is in fact an ideal of $Nil(P)$ by Theorem~\ref{nil-all}. Hence, they are ideals of $P$ by the condition $e \cdot Nil(P) = [e, Nil(P)] = 0$.
\end{proof}

Note that Example~\ref{ex2} is an example of Corollary~\ref{allcor1} where $P_A$ is nilpotent. Next, we give an example of Corollary~\ref{allcor1} where $P_A$ is not nilpotent.

\begin{ex}\label{ex5}
By \cite{Bai}, $(P=\mathbb{C}e_1\oplus\mathbb{C}e_2,\cdot,[\cdot,\cdot])$ is a transposed Poisson algebra with the trivial Lie bracket and the non-zero commutative associative product given by
\begin{eqnarray*}
 &&e_1 \cdot e_1 = e_1.
\end{eqnarray*}
It is easy to see that the maximal subalgebras of $P$ are $\mathbb{C}e_1$ and $\mathbb{C}e_2$, which are clearly ideals. Moreover, we have $P=\mathbb{C}e\oplus (1-e)\cdot P$, where the idempotent $e$ is $e_1$ and $(1-e)\cdot P=\mathbb{C}e_2=Nil(P)$.
\end{ex}
\vspb

By Proposition ~\ref{Fpro1}, we have $F(P) \subseteq P^1$. Under the stronger assumption that $P$ is nilpotent, the  reverse inclusion also holds. The proof is similar to that in ~\cite[Theorem 4.7]{Abelian subalgebra of Poisson}.\vspa
\begin{thm}\label{nilFP}
Let $(P,\cdot,[\cdot,\cdot])$ be a nilpotent transposed Poisson algebra. Then $F(P)=P^1$.
\end{thm}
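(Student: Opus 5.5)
The inclusion $F(P)\subseteq P^1$ is Proposition~\ref{Fpro1}, so only $P^1\subseteq F(P)$ needs proof. Equivalently, every maximal subalgebra $M$ of $P$ contains $P^1 = P\cdot P+[P,P]$. Since $F(P)$ is the intersection of all maximal subalgebras, it suffices to fix one maximal subalgebra $M$ and show $P^1\subseteq M$.

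By Theorem~\ref{nil-all}, $M$ is an ideal of $P$, so the quotient $P/M$ is a transposed Poisson algebra. The plan is to show $P/M$ is one-dimensional with trivial products. First, $P/M$ has no proper nonzero subalgebras, since any such subalgebra would pull back to a subalgebra strictly between $M$ and $P$. Second, $P/M$ is nilpotent, as a quotient of a nilpotent algebra: its lower central series is the image of that of $P$.

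Now take a nonzero $\bar x\in P/M$. The span ${\bf k}\bar x$ would be a proper subalgebra if $\dim(P/M)\ge 2$, unless $\bar x\cdot\bar x\notin{\bf k}\bar x$. (Note $[\bar x,\bar x]=0$ automatically.) This needs more care. Nilpotency gives $(P/M)^1\subsetneq P/M$ whenever $P/M\neq 0$. Moreover $(P/M)^1$ is a subalgebra, indeed an ideal. By minimality it must be $0$, so $P/M$ has trivial products. Then every subspace is a subalgebra, which forces $\dim(P/M)=1$.

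In either case $(P/M)^1=0$, meaning $P\cdot P+[P,P]\subseteq M$. Intersecting over all maximal $M$ gives $P^1\subseteq F(P)$.

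The main point to verify carefully is that $(P/M)^1\neq P/M$. If $P/M$ is finite-dimensional, this is immediate from the nilpotency of $P/M$: if $(P/M)^1=P/M$, then $(P/M)^k=P/M$ for all $k$, contradicting $(P/M)^m=0$. The same argument works in general, since nilpotency gives $(P/M)^m=0$ for some $m$, which already rules out $(P/M)^1=P/M$ when $P/M\ne 0$. With this, no further obstacle remains. If the intended statement is for arbitrary (possibly infinite-dimensional) $P$ with no maximal subalgebras, then $F(P)=P$ by convention and a separate remark would be needed. I expect the finite-dimensional setting is intended, consistent with Proposition~\ref{Fpro1}.
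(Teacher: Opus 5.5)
Your proof is correct and is essentially the paper's argument: both use Theorem~\ref{nil-all} to make a maximal subalgebra $M$ an ideal and then use nilpotency to force $P^1\subseteq M$. The paper chooses $n$ with $P^n\not\subseteq M$ and $P^{n+1}\subseteq M$, gets $P=M+P^n$, and deduces $P^1\subseteq M+P^{n+1}\subseteq M$; you instead observe that $(P/M)^1$ is a proper ideal of $P/M$, which has no subalgebras other than $0$ and itself, so it must vanish. Your closing worry about the infinite-dimensional case is unnecessary, since a nonzero nilpotent $P$ has $P^1\neq P$ and every codimension-one subspace containing $P^1$ is a maximal subalgebra.
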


\begin{proof}
For any maximal subalgebra \(M\) of \(P\), since \(P\) is nilpotent, there exists a non-negative integer $n$ such that \(P^n \not\subseteq M\) but \(P^{n+1}\subseteq M\). By Proposition~\ref{idealpro1} and the maximality of \(M\), we obtain \(P = M + P^n\). Moreover,  \(M\) is also an ideal of \(P\) by Theorem~\ref{nil-all}. Therefore, we have
\begin{eqnarray*}
&&P^1=P\cdot P + [P,P] = (M+P^n)\cdot (M+P^n) + [M+P^n,M+P^n]\subseteq M + P^{n+1} \subseteq M.
\end{eqnarray*}
Thus \(P^1 \subseteq F(P)\) and it follows that \(F(P) = P^1\) by Proposition~\ref{Fpro1} .
\end{proof}

Note that Example~\ref{ex0} provides an example of Theorem~\ref{nilFP}.

\delete{\begin{ex}\label{ex4}
Let $(P,\cdot,[\cdot,\cdot])$ be a $2$-dimensional vector space over $\mathbb{C}$ with basis $\{e_1, e_2, e_3\}$, where $P_L$is trivial and the non-zero commutative associative product is given by
\begin{eqnarray*}
 &&e_1 \cdot e_1 = e_2.
\end{eqnarray*}
According to the classification in \cite{Bai}, this defines a transposed Poisson algebra. A direct computation shows that $P$ is nilpotent and the maximal subalgebras of $P$ only $\mathbb{C}e_2$, so $F(P)=\mathbb{C}e_2=P^1$.
\end{ex}
}

\begin{lem}\cite[Lemma 3.1]{Frattini of dialgebras}\label{Frattiniideallem1}
If $C$ is a subalgebra of the dialgebra $A$, and $B$ is an ideal of $A$ contained in $F(C)$, then $B$ is contained in $F(A)$.
\end{lem}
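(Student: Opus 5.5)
We argue by contradiction, in the same way as the classical proof for Lie algebras. Let $C$ be a subalgebra of the dialgebra $A$, and let $B$ be an ideal of $A$ with $B\subseteq F(C)$. Suppose that $B\not\subseteq F(A)$. By definition of $F(A)$, there is then a maximal subalgebra $M$ of $A$ with $B\not\subseteq M$.

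Since $B$ is an ideal of $A$, the subspace $M+B$ is a subalgebra of $A$. Indeed, products and brackets involving $B$ land in $B$, and products and brackets of elements of $M$ land in $M$. This subalgebra strictly contains $M$, so maximality of $M$ forces $A=M+B$.

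Next we intersect with $C$. Because $B\subseteq F(C)\subseteq C$, the modular law gives
\[
C = C\cap(M+B) = (C\cap M)+B .
\]
The subspace $C\cap M$ is a subalgebra of $C$, being an intersection of subalgebras. Moreover $B\subseteq F(C)$, so $(C\cap M)+F(C)=C$. This is exactly the situation of Proposition~\ref{Fpro1}(c), applied to the dialgebra $C$ with subalgebra $C\cap M$. That proposition is stated for an arbitrary dialgebra, so it applies to $C$, and it yields $C\cap M=C$, i.e.\ $C\subseteq M$.

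Consequently $B\subseteq C\subseteq M$, which contradicts the choice of $M$. Hence $B\subseteq F(A)$.

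The only step needing care is the appeal to Proposition~\ref{Fpro1}(c) for $C$, whose proof the paper does not write out. Its justification is standard. Suppose $D=C\cap M$ were proper in $C$. If $C$ is finite-dimensional, then $D$ lies in some maximal subalgebra $N$ of $C$. Since $F(C)\subseteq N$, we would get $C=D+F(C)\subseteq N$, which is impossible because $N$ is proper. If $C$ has no maximal subalgebras, then $F(C)=C$ by the empty-intersection convention, and the conclusion is trivial. In the finite-dimensional setting used throughout the paper, this gives the result.
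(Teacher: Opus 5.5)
Your proof is correct and matches the standard argument of the cited source (Towers' Lemma 3.1), which the paper quotes without proof: a maximal $M$ with $B\not\subseteq M$ gives $A=M+B$; the modular law gives $C=(C\cap M)+B=(C\cap M)+F(C)$, hence $C\subseteq M$ and the contradiction $B\subseteq M$. One correction to your closing aside: if $C\neq 0$ had no maximal subalgebras, then $F(C)=C$ and $D+F(C)=C$ would hold for every subalgebra $D$ (e.g.\ $D=0$), so Proposition~\ref{Fpro1}(c) fails there rather than being trivial; this case cannot occur for finite-dimensional $C$, the setting of the paper and the cited lemma, so your argument is unaffected.
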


\begin{cor}\label{F(P)lower bounds-Nil}
Let $(P,\cdot,[\cdot,\cdot])$ be a transposed Poisson algebra. Write $N = Nil(P)$ for brevity. Then we have $N^1\subseteq N\cap F(P)\subseteq \phi(P)$.
\end{cor}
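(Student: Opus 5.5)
We need $N^1\subseteq N\cap F(P)\subseteq\phi(P)$, where $P$ is finite-dimensional so that $N=Nil(P)$ exists. The plan is to prove the two inclusions separately, using Theorem~\ref{nilFP} together with Lemma~\ref{Frattiniideallem1} for the first, and the definition of $\phi(P)$ for the second.

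\emph{First inclusion.} Since $N$ is an ideal of $P$, Proposition~\ref{idealpro1} (with $I=N$, $k=1$) shows that $N^1$ is an ideal of $P$. Trivially $N^1\subseteq N$. Viewing $N$ as a nilpotent transposed Poisson algebra in its own right, Theorem~\ref{nilFP} gives $F(N)=N^1$. Thus $N^1$ is an ideal of $P$ contained in $F(N)$, where $N$ is a subalgebra of $P$. Lemma~\ref{Frattiniideallem1}, applied with $C=N$, $A=P$ and $B=N^1$, then yields $N^1\subseteq F(P)$, so $N^1\subseteq N\cap F(P)$.

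\emph{Second inclusion.} Since $\phi(P)$ is the largest ideal of $P$ contained in $F(P)$, it suffices to show that $N\cap F(P)$ is an ideal of $P$. This is the only point needing care, because $F(P)$ is a priori merely a subalgebra. I would argue via $\phi(P)$ directly. Put $I=N\cap F(P)$; the goal is $P\cdot I+[P,I]\subseteq F(P)$, and closure in $N$ is automatic. Take $x\in I$ and $p\in P$, and suppose $p\cdot x\notin M$ for some maximal subalgebra $M$. The aim is a contradiction from the nilpotency of $N$, in the spirit of the Lie-algebra proof that $F\cap Nil$ is contained in $\phi$.

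A cleaner route, which I would try first, is the following. Let $J$ be the ideal of $P$ generated by $I$; then $J\subseteq N$, so $J$ is nilpotent. Choose $t$ maximal with $J^t\not\subseteq F(P)$, if such $t$ exists. For any maximal subalgebra $M$ with $J^t\not\subseteq M$, we get $P=M+J^t$. Using that $J^{t+1}\subseteq F(P)\subseteq M$ and the product formulas of Proposition~\ref{nildefi1}, one shows that $M$ is normalized by $J^t$, and hence that $M$ is an ideal. From there one concludes, as in Theorem~\ref{nilFP}, that $J^t\cdot P+[J^t,P]\subseteq M$. Running this for every $M$ should force $J^1\subseteq F(P)$, and then, since $J=I+J^1$ up to ideal generation, $J\subseteq F(P)$. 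Consequently $I\subseteq J\subseteq\phi(P)$.

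The main obstacle is exactly this step: showing that $N\cap F(P)$ (or the ideal it generates) lies in $F(P)$. If the filtration argument becomes messy, I would fall back on Proposition~\ref{Fpro1}(d). If the ideal $J$ were not contained in $F(P)$, there would be a proper subalgebra $B$ with $P=J+B$, and I would try to derive a contradiction from the nilpotency of $J$ by pushing down $P=J^k+B$ until $J^k\subseteq F(P)$, which is the same mechanism as in Lemma~\ref{Frattiniideallem}.
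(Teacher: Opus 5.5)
Your first inclusion is exactly the paper's argument (Proposition~\ref{idealpro1}, Theorem~\ref{nilFP} applied to $N$, and Lemma~\ref{Frattiniideallem1} with $B=N^1$, $C=N$, $A=P$). The gap is in the second inclusion, at the step you yourself flag. In your ``cleaner route'' you claim that from $P=M+J^t$ and $J^{t+1}\subseteq F(P)\subseteq M$ ``one shows that $M$ is normalized by $J^t$, and hence that $M$ is an ideal''. This does not follow from those hypotheses. Take $P={\bf k}x\oplus{\bf k}y$ with zero product and $[x,y]=y$; this is a transposed Poisson algebra, since Eq.~\eqref{transposedpoissonlu} holds trivially when the product is zero. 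Take $J={\bf k}y$, $t=0$ and $M={\bf k}x$. Then $J$ is a nilpotent ideal, $J^1=0$, $F(P)=0$ and $P=M+J$, yet $[y,x]=-y\notin M$. Your sketch never uses the one feature that distinguishes your $J$, namely that it is generated by elements of $F(P)$. The fallback through Proposition~\ref{Fpro1}(d) stalls for the same reason: $P=J+B$ with $B={\bf k}x$ cannot be pushed down to $P=J^1+B$. The filtration is also idle. Since $J\subseteq N$, your first inclusion gives $J^1\subseteq N^1\subseteq F(P)$, so only $t=0$ can occur and ``forcing $J^1\subseteq F(P)$'' gains nothing. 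Moreover, the passage from $J^1\subseteq F(P)$ to $J\subseteq F(P)$ is unjustified: $J=I+P\cdot J+[P,J]$, and $P\cdot J+[P,J]$ is in general larger than $J^1$.

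The missing idea is a direct computation that feeds the first inclusion back in. The direct attempt you set aside was one line away from it. Put $I=N\cap F(P)$ and let $M$ be any maximal subalgebra of $P$. If $N\subseteq M$, then $I\cdot P+[I,P]\subseteq N\subseteq M$ because $N$ is an ideal. If $N\not\subseteq M$, then $M+N$ is a subalgebra properly containing $M$, so $P=M+N$. Since $I\subseteq F(P)\subseteq M$ and $N^1\subseteq F(P)\subseteq M$, we get
\[
I\cdot P+[I,P]\subseteq I\cdot M+[I,M]+I\cdot N+[I,N]\subseteq M+N^1\subseteq M .
\]
Hence $I\cdot P+[I,P]\subseteq F(P)\cap N=I$, so $I$ is an ideal of $P$ contained in $F(P)$, and therefore $I\subseteq\phi(P)$. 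This is the paper's proof. The nilpotency of $N$ is used only through $N^1\subseteq F(P)$; no ideal generated by $I$, no filtration and no normalizer argument are needed.
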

\begin{proof}
By Proposition~\ref{idealpro1}, Lemma~\ref{Frattiniideallem1} (let $B=N^1,C=N,A=P$ ) and Theorem~\ref{nilFP}, we have $F(N)=N^1\subseteq F(P)$. Therefore, we obtain $N^1\subseteq F(P)\cap N$. We claim that $K\coloneqq F(P)\cap N$ is an ideal of $P$. Since $K\cdot P+[K,P]\subseteq N\cdot P+[N,P]\subseteq N$, we only need to prove that $K\cdot P+[K,P]\subseteq F(P)$. If not, there is a maximal subalgebra $M$ of $P$ such that $K\cdot P+[K,P]\not\subseteq M$, which also implies that $N\not\subseteq M$. So $P=N+M$ by the maximality of $M$. Since $K\subseteq N$ and $K\subseteq F(P)\subseteq M$, we obtain
\begin{eqnarray*}
&&K\cdot P+[K,P]=K\cdot(M+N)+[K,M+N]\subseteq N^1+M\subseteq K+M\subseteq F(P)+M\subseteq M,
\end{eqnarray*}
which leads to a contradiction. Hence, $K$ is an ideal of $P$ and $K\subseteq \phi(P)$.
\end{proof}
Note that Example~\ref{ex0} provides an example of Corollary~\ref{F(P)lower bounds-Nil}.

\begin{defi}
The {\bf Frattini series} of a transposed Poisson algebra \((P,\cdot,[\cdot,\cdot])\) is the sequence of subalgebras \(\{F_i\}\) defined by \(F_0 = P\) and \(F_i\) is the Frattini subalgebra of \(F_{i-1}\), for \(i > 1\). The Frattini index of \(P\) is the smallest integer \(f\) for which \(F_f = 0\).
\end{defi}
\vspb
For a finite-dimensional transposed Poisson algebra, since each $F_{i+1}$ is a proper subalgebra of $F_i$, the Frattini index exists and is at most the dimension of $P$.  For a nilpotent transposed Poisson algebra, we obtain a stronger bound.
\vspb
\begin{cor}
Let \((P,\cdot,[\cdot,\cdot])\) be a nilpotent transposed Poisson algebra. Then \(F_{i} \subseteq P^{(i)}\) holds for all \(i \in \mathbb{N}\). In particular, the Frattini index \(f\) of \(P\) satisfies \(f \le d\), where \(d\) is the solvable index, that is, the smallest integer $m$ such that $P^{(m)}=0$.
\end{cor}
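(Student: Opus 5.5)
We argue by induction on $i$, using Theorem~\ref{nilFP} at each step. Note first that every subalgebra of a nilpotent transposed Poisson algebra is itself a nilpotent transposed Poisson algebra. Indeed, if $A\subseteq P$ is a subalgebra, then $A^k\subseteq P^k$ for all $k$, which follows immediately from the definition of the lower central series. Hence each term $F_i$ of the Frattini series is nilpotent. Theorem~\ref{nilFP} applied to $F_{i-1}$ then gives
\[
F_i = F(F_{i-1}) = F_{i-1}^1 = F_{i-1}\cdot F_{i-1} + [F_{i-1},F_{i-1}].
\]
Thus for nilpotent $P$ the Frattini series coincides term by term with the derived series. The identity above uses only the first step of the lower central series, and $F_{i-1}^1 = F_{i-1}^{(1)}$ by the definitions.

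The induction runs as follows. For $i=0$ we have $F_0=P=P^{(0)}$. Suppose $F_{i-1}\subseteq P^{(i-1)}$. Since both operations are bilinear, the two products are monotone in each argument, so
\[
F_i = F_{i-1}\cdot F_{i-1} + [F_{i-1},F_{i-1}] \subseteq P^{(i-1)}\cdot P^{(i-1)} + [P^{(i-1)},P^{(i-1)}] = P^{(i)}.
\]
In fact the same argument yields equality, $F_i=P^{(i)}$, but the inclusion is all the statement requires. For the bound on the Frattini index, take $d$ with $P^{(d)}=0$; such a $d$ exists because nilpotency implies solvability, as recalled earlier. Then $F_d\subseteq P^{(d)}=0$, so $f\le d$.

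The only point needing care is that Theorem~\ref{nilFP} must be applied to the subalgebras $F_{i-1}$ rather than to $P$ itself. This is justified by the inheritance of nilpotency noted above. Since $F(P)$ is always a subalgebra, each $F_{i-1}$ is a transposed Poisson algebra in its own right, so the theorem applies to it.
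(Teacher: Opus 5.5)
Your proof is correct and follows the paper's argument: induction on $i$, applying Theorem~\ref{nilFP} to the nilpotent subalgebra $F_{i-1}$ to get $F_i=F_{i-1}^1\subseteq (P^{(i-1)})^1=P^{(i)}$. You make explicit that subalgebras inherit nilpotency, which the paper uses without comment, and your remark that the same induction gives $F_i=P^{(i)}$, hence $f=d$, is also valid.
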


\begin{proof}
We prove the statement by induction on \(i\). For \(i=0\), the inclusion is clear. Now assume that \(F_i \subseteq P^{(i)}\) holds for some \( i \ge 1\). By Theorem~\ref{nilFP}, we have
\begin{eqnarray*}
&&F_{i+1}=F_i^1 = F_i\cdot F_i+[F_i,F_i] \subseteq \bigl(P^{(i)}\bigr)^1 = P^{(i+1)}.
\end{eqnarray*}
This completes the inductive step. We obtain the conclusion directly.
\vspa
\end{proof}
Example~\ref{ex2} shows that the converse of Theorem~\ref{nilFP} is not true. However, we can prove that $F(P)=P^1$ implies associative nilpotency.\vspa

\begin{thm}\label{FP-Pnil}
Let $(P,\cdot,[\cdot,\cdot])$ be a finite-dimensional Lie nilpotent transposed Poisson algebra. If $F(P)=P^1$, then $P$ is nilpotent.
\end{thm}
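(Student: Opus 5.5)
The plan is to reduce the statement, via Theorem~\ref{Engelthm}, to showing that $P_A$ is nilpotent. Since $P_L$ is nilpotent by hypothesis, Theorem~\ref{Engelthm} then gives nilpotency of $P$. The Lie nilpotency hypothesis enters only at this final step.

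First, I will observe that $P^1=P\cdot P+[P,P]$ is an ideal of $P$. This is Proposition~\ref{idealpro1} applied to the ideal $I=P$, with $k=1$. Since $F(P)=P^1$ by hypothesis, $F(P)$ is itself an ideal. Because $\phi(P)$ is by definition the largest ideal of $P$ contained in $F(P)$, we get $\phi(P)=F(P)=P^1$.

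Next, I will apply Theorem~\ref{Frattiniidealthm}: $\phi(P)$ is associative nilpotent, so $(P^1,\cdot)$ is a nilpotent associative algebra. In particular, every element of $P^1$ is nilpotent with respect to $\cdot$. To conclude that $P_A$ is nilpotent, I will argue by contradiction. If $P_A$ were not nilpotent, then, as used in the proof of Theorem~\ref{allthm1}, the finite-dimensional commutative associative algebra $P_A$ would contain a nonzero idempotent $e$. But $e=e\cdot e\in P\cdot P\subseteq P^1$, and $e=e^n\neq 0$ for all $n$, contradicting nilpotency of $P^1$.

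Alternatively, nilpotency of $P_A$ can be seen directly. Since $P\cdot P\subseteq P^1$, an easy induction gives $P_A^{2k}\subseteq (P^1)_A^{k}$. Hence $(P^1)_A^m=0$ forces $P_A^{2m}=0$.

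Finally, with $P_A$ and $P_L$ both nilpotent, Theorem~\ref{Engelthm} yields that $P$ is nilpotent. I do not expect a serious obstacle. The only point needing care is identifying $F(P)$ with $\phi(P)$, which rests on $P^1$ being an ideal; that is exactly what Proposition~\ref{idealpro1} supplies, so that Theorem~\ref{Frattiniidealthm} applies.
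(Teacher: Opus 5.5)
Your proof is correct, and it takes a different and noticeably shorter route than the paper's. The paper argues structurally. From $F(P)=P^1$ it deduces that every maximal subalgebra contains $P^1$ and is therefore an ideal. If $P_A$ were not nilpotent, the decomposition of Theorem~\ref{allthm1}, sharpened via Lie nilpotency as in Corollary~\ref{allcor1}, gives $P={\bf k}e\oplus Nil(P)$. Taking $Nil(P)$ and the subalgebras ${\bf k}e\oplus M'$ as maximal subalgebras and applying Theorem~\ref{nilFP}, the paper then gets $F(P)\subseteq Nil(P)^1\subsetneqq Nil(P)^1+{\bf k}e=P^1$, a contradiction. You instead observe that $P^1$ is always an ideal, so the hypothesis makes $F(P)$ an ideal and hence $F(P)=\phi(P)$. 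Theorem~\ref{Frattiniidealthm} then makes $P^1\supseteq P\cdot P$ associative nilpotent, which rules out any nonzero idempotent. Your route avoids the decomposition theorem and principal idempotents entirely, and it proves more: for any finite-dimensional transposed Poisson algebra, $F(P)=P^1$ already forces $P_A$ to be nilpotent, and Lie nilpotency is needed only for the final appeal to Theorem~\ref{Engelthm}. The cost is that you rely on the Fitting-decomposition argument behind Theorem~\ref{Frattiniidealthm}, whereas the paper's route stays within the analysis of maximal subalgebras and gives an explicit description of $F(P)$ in the split case $P={\bf k}e\oplus Nil(P)$. One small slip, in your alternative argument only: the inclusion $P_A^{2k}\subseteq (P^1)_A^{k}$ is off by one. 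For $P=x\,{\bf k}[x]/(x^4)$ with zero bracket and $k=1$, you get $P_A^2={\bf k}x^3$ while $(P^1)_A^1=0$. The correct inclusion is $P_A^{2k+1}\subseteq (P^1)_A^{k}$, which still gives nilpotency of $P_A$, and your main idempotent argument is unaffected.
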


\begin{proof}
Since $F(P)=P^1$, every maximal subalgebra $M$ of $P$ satisfies $P\cdot P+[P,P]\subseteq M$. Therefore, all maximal subalgebras of $P$ are ideals. If $P_A$ is not nilpotent, then we have $P={\bf k}e\oplus Nil(P)$ by Theorem~\ref{allthm1}. Therefore, we obtain
\begin{eqnarray*}
&&P^1=[{\bf k}e+ Nil(P),{\bf k}e+ Nil(P)]+\big({\bf k}e+Nil(P)\big)\cdot \big({\bf k}e+Nil(P)\big)=Nil(P)^1+{\bf k}e
\end{eqnarray*}
and the maximal subalgebras of $P$ are of the form $Nil(P)$ or ${\bf k}e\oplus M'$, where $M'$ is a maximal subalgebra of $Nil(P)$. Consequently, we obtain
\begin{eqnarray*}
&&F(P)\subseteq \bigl(F(Nil(P))+\mathbf{k}e\bigr)\cap Nil(P)=F\big(Nil(P)\big).
\end{eqnarray*}
Since $Nil(P)$ is nilpotent, Theorem~\ref{nilFP} implies $F\big(Nil(P)\big)=Nil(P)^1$. Thus we have
\begin{eqnarray*}
&&F(P)\subseteq Nil(P)^1\subsetneqq Nil(P)^1+{\bf k}e=P^1,
\end{eqnarray*}
which contradicts with $F(P)=P^1$. Hence, $P$ is nilpotent by Theorem~\ref{Engelthm}.
\end{proof}

\subsection{Socles and zero socles of transposed Poisson algebras}
The socle and zero socle of a Poisson algebra were introduced in~\cite{Frattini of dialgebras}. In this subsection, we consider the socle and zero socle of a transposed Poisson algebra.
\begin{defi}~\cite{Frattini of dialgebras}
For a transposed Poisson algebra \((P,\cdot,[\cdot,\cdot])\), the sum of its minimal ideals is called the {\bf socle}, denoted \(\operatorname{Soc}(P)\). The sum of its minimal abelian ideals is called the {\bf zero socle}, denoted \(\operatorname{Zsoc}(P)\).
\vspa
\end{defi}

With a similar proof in ~\cite[Theorem 4.4]{Frattini of dialgebras}, we have the following theorem.

\begin{thm}\label{freethm2}
Let $(P,\cdot,[\cdot,\cdot])$ be a finite-dimensional transposed Poisson algebra. Then $\operatorname{Zsoc}(P)\subseteq Nil(P) \subseteq \operatorname{Ann}_P\big (\text{Soc}(P)\big)$. If $\phi(P)=0$, then we have $\operatorname{Zsoc}(P)= Nil(P)= \operatorname{Ann}_P\big (\text{Soc}(P)\big)$.
\end{thm}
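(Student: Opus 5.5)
The proof has three parts: the two inclusions, and then, when $\phi(P)=0$, the reverse inclusion $\operatorname{Ann}_P(\operatorname{Soc}(P))\subseteq \operatorname{Zsoc}(P)$.

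\emph{First inclusion.} $\operatorname{Zsoc}(P)$ is a sum of minimal abelian ideals. Each abelian ideal is nilpotent, and by the corollary to Theorem~\ref{Engelthm} a finite sum of nilpotent ideals is nilpotent. Hence $\operatorname{Zsoc}(P)$ is a nilpotent ideal and $\operatorname{Zsoc}(P)\subseteq Nil(P)$.

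\emph{Second inclusion.} Let $N=Nil(P)$ and let $A$ be a minimal ideal of $P$. By Proposition~\ref{idealpro2}, $[A,N]$ is an ideal of $P$. As in the proof of Lemma~\ref{minimalideal-Annlem}, $A\cdot N$ is also an ideal: $(A\cdot N)\cdot P\subseteq A\cdot N$, and $[A\cdot N,P]\subseteq A\cdot[N,P]+[N,A\cdot P]\subseteq A\cdot N+[N,A]$. So I use the ideal $[A,N]+A\cdot N$, which lies in $A\cap N$. Minimality gives two cases. If $A\cap N=0$, then $[A,N]+A\cdot N=0$ and we are done. Otherwise $A\subseteq N$, and Lemma~\ref{minimalideal-Annlem} gives $A\subseteq\operatorname{Ann}_P(N)$. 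In both cases $N$ annihilates $A$, and summing over all minimal ideals gives $N\subseteq\operatorname{Ann}_P(\operatorname{Soc}(P))$.

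\emph{Reverse inclusion when $\phi(P)=0$.} This follows Towers' argument for Poisson algebras. Set $C=\operatorname{Ann}_P(\operatorname{Soc}(P))$; it is an ideal by Remark~\ref{Ann_L}.

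Step 1: $C$ is nilpotent. Any minimal ideal $A$ of $P$ with $A\subseteq C$ satisfies $[A,C]=A\cdot C=0$. I would then show $C\cap\operatorname{Soc}(P)$ is an ideal with $C^1\cap\operatorname{Soc}(P)=0$, i.e.\ $C$ acts trivially on it.

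The key point is a dialgebra version of the Lie-theoretic fact that $\phi(P)=0$ implies every ideal of $P$ lies in $\operatorname{Soc}(P)$ or meets a complement. Concretely: if $\phi(P)=0$, then for each ideal $I$ there is a subalgebra $B$ with $P=\operatorname{Soc}(P)+B$, and more precisely $\operatorname{Soc}(P)$ has complements inside ideals. I would prove this by induction, using Proposition~\ref{Fpro1}(d) together with Lemma~\ref{Frattiniideallem1}. With this in hand, $C=\operatorname{Soc}(P)\cap C+(B\cap C)$.

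Next I show $B\cap C=0$. Suppose not. Take the ideal generated by $C^k$ for large $k$; it is an ideal of $P$ by Proposition~\ref{idealpro1}. If nonzero, it contains a minimal ideal $A\subseteq\operatorname{Soc}(P)\cap C$. But $C$ annihilates $\operatorname{Soc}(P)$, so $C\cdot C\subseteq C\cap B$-part. Iterating, $C^k\subseteq B$ eventually stabilizes in an ideal inside $B$ disjoint from $\operatorname{Soc}(P)$, which is impossible for a nonzero ideal. So $C$ is nilpotent, and $C\subseteq N$.

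Step 2: $N\subseteq\operatorname{Zsoc}(P)$. By Corollary~\ref{F(P)lower bounds-Nil}, $N^1\subseteq\phi(P)=0$, so $N$ is abelian. Using the complement property again, $N=\bigoplus$ of minimal ideals inside $N$. Each of these is abelian because $N$ is, so $N\subseteq\operatorname{Zsoc}(P)$.

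\emph{Main obstacle.} The hard step is the complement statement for $\phi(P)=0$. I would prove it as follows. Choose a subalgebra $B$ minimal with respect to $P=\operatorname{Soc}(P)+B$, and show $\operatorname{Soc}(P)\cap B\subseteq F(B)$. Then, since $\operatorname{Soc}(P)\cap B$ is an ideal of $P$ because $\operatorname{Soc}(P)$ is abelian-acted, Lemma~\ref{Frattiniideallem1} puts it in $\phi(P)=0$. Checking that $\operatorname{Soc}(P)\cap B$ really is an ideal of $P$ needs $[\operatorname{Soc}(P)\cap N,\operatorname{Soc}(P)]=0$, and this is where I expect difficulty for the non-abelian minimal ideals. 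I would restrict that argument to $N$, where $N^1=0$ makes it work, rather than to all of $\operatorname{Soc}(P)$.
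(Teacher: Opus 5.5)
Your two unconditional inclusions are correct and follow the standard argument the paper borrows from Towers. Your Step~2 is also sound, provided the ``complement property'' is taken in its abelian form. Every ideal $D\subseteq N$ of $P$ satisfies $D\cdot D+[D,D]=0$ and $D\cap\phi(P)=0$. For a subalgebra $B_D$ minimal with $P=D+B_D$, your supplement argument then shows that $D\cap B_D$ is an ideal of $P$ inside $F(B_D)$, hence zero. So $N=D\dot{+}(N\cap B_D)$, with $N\cap B_D$ an ideal of $P$ because $N^1=0$. Iterating, $N$ splits into minimal ideals, which are necessarily abelian.

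The gap is Step~1, the inclusion $C:=\operatorname{Ann}_P(\operatorname{Soc}(P))\subseteq Nil(P)$. As written it rests on two things: a subalgebra complement $B$ of the whole socle, and the splitting $C=(\operatorname{Soc}(P)\cap C)+(B\cap C)$. Neither is established.
\begin{itemize}
\item You note yourself that the supplement argument fails for non-abelian minimal ideals.
\item The modular law would require $\operatorname{Soc}(P)\subseteq C$. This is false whenever $\operatorname{Soc}(P)$ has a non-abelian minimal ideal, because any minimal ideal $A\subseteq C$ satisfies $A\cdot A+[A,A]\subseteq C\cdot\operatorname{Soc}(P)+[C,\operatorname{Soc}(P)]=0$.
\end{itemize}
Retreating to a complement of $N$ does not repair Step~1 as written. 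Its closing contradiction (``a nonzero ideal inside $B$ disjoint from $\operatorname{Soc}(P)$'') disappears, since a complement of $N$ can contain non-abelian minimal ideals.

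The missing idea is the observation you already made but did not use: minimal ideals of $P$ contained in $C$ are abelian, and so lie in $\operatorname{Zsoc}(P)$. The repaired argument runs as follows.
\begin{itemize}
\item Do Step~2 first, giving $N=\operatorname{Zsoc}(P)$ abelian with $N\cap\phi(P)=0$. Hence $P=N\dot{+}U$ for some subalgebra $U$.
\item Since $N\subseteq C$, the modular law gives $C=N\dot{+}(C\cap U)$.
\item $C\cap U$ is an ideal of $P$. Indeed $N\cdot C+[N,C]=0$ because $N\subseteq\operatorname{Soc}(P)$, and $U\cdot(C\cap U)+[U,C\cap U]\subseteq C\cap U$.
\item If $C\cap U\neq 0$, it contains a minimal ideal $A$ of $P$. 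Then $A\subseteq C$ forces $A$ to be abelian, so $A\subseteq\operatorname{Zsoc}(P)=N$, contradicting $N\cap U=0$.
\end{itemize}
Thus $C=N$.
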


Note that Example~\ref{ex5} provides an example of Theorem~\ref{freethm2} with $F(P)=\phi(P)=0$ and $\operatorname{Zsoc}(P)= Nil(P)= \operatorname{Ann}_P\big (\text{Soc}(P)\big)=\mathbb{C}e_2$.

\begin{lem}
\cite[Lemma 3.6]{Frattini of dialgebras}\label{freelem3}
Let $B$ be an abelian ideal of a dialgebra $A$ such that $B \cap \phi(A) = 0$. Then there is a subalgebra $C$ of $A$ such that $A = B \dot{+} C$.
\end{lem}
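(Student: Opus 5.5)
The plan is the standard ``minimal supplement'' argument; I assume $A$ is finite-dimensional, as elsewhere in this section. Among all subalgebras $C$ of $A$ with $A=B+C$ (there is at least one, namely $C=A$), choose one of minimal dimension. It then suffices to show that $B\cap C=0$, which gives $A=B\dot{+}C$.

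\emph{Step 1: $B\cap C$ is an ideal of $A$.} Let $D=B\cap C$. Since $B$ is abelian and $D\subseteq B$, we have $D\cdot B+B\cdot D+[D,B]+[B,D]=0$. Since $B$ is an ideal and $C$ is a subalgebra, $D\cdot C+C\cdot D+[D,C]+[C,D]\subseteq B\cap C=D$. Because $A=B+C$, it follows that $D$ is an ideal of $A$, and in particular an ideal of $C$.

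\emph{Step 2: $D\subseteq F(C)$.} This is the step that uses the minimality of $C$, and I expect it to be the only delicate point. Suppose $D\not\subseteq F(C)$. Then there is a maximal subalgebra $M$ of $C$ with $D\not\subseteq M$. Since $D$ is an ideal of $C$, the subspace $M+D$ is a subalgebra of $C$ properly containing $M$, so $M+D=C$ by maximality. Hence
\[
A=B+C=B+D+M=B+M,
\]
and $\dim M<\dim C$, which contradicts the choice of $C$. (If $C$ has no maximal subalgebras, then $C=0$ and there is nothing to prove.)

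\emph{Step 3: conclusion.} By Lemma~\ref{Frattiniideallem1}, applied with the ideal $D$ of $A$ and the subalgebra $C$, Step 2 gives $D\subseteq F(A)$. Since $D$ is an ideal of $A$ contained in $F(A)$, and $\phi(A)$ is the largest such ideal, we get $D\subseteq\phi(A)$. Therefore
\[
D\subseteq B\cap\phi(A)=0,
\]
so $B\cap C=0$ and $A=B\dot{+}C$, as required.
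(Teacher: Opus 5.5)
Your proof is correct. It is essentially the standard argument behind the cited result; the paper gives no proof and only quotes Towers' Lemma 3.6. You take a supplement $C$ of $B$ of minimal dimension, note that $B\cap C$ is an ideal of $A$ because $B$ is abelian, use minimality to get $B\cap C\subseteq F(C)$, and apply Lemma~\ref{Frattiniideallem1} to land in $\phi(A)$. Your finite-dimensionality assumption is the right one, since both the existence of a minimal supplement and Lemma~\ref{Frattiniideallem1} depend on it.
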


In the next theorem and corollary, we consider the complement of $\operatorname{Zsoc}(P)$ in a Lie nilpotent transposed Poisson algebra with $\phi(P)=0$.

\begin{thm}\label{freethm3}
Let $(P,\cdot,[\cdot,\cdot])$ be a finite-dimensional Lie nilpotent transposed Poisson algebra with $\phi(P)=0$ and $M$ be a minimal abelian ideal of $P$. Then there exists a subalgebra $B$ such that $P= M\dot{+} B$ with $\phi(B)=0$.
\end{thm}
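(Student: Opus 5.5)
The first step is to get the complement itself. Since $M$ is an abelian ideal and $\phi(P)=0$, we have $M\cap\phi(P)=0$, so Lemma~\ref{freelem3} gives a subalgebra $B$ with $P=M\dot{+}B$. What remains, and what I expect to be the main obstacle, is to show $\phi(B)=0$. The plan is to show that $\phi(B)$ is an ideal of $P$ contained in $F(P)$. Then $\phi(B)\subseteq\phi(P)=0$.

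To see that $\phi(B)$ is an ideal of $P$, write $P=M+B$. Since $\phi(B)$ is an ideal of $B$, it suffices to prove
\[
\phi(B)\cdot M+[\phi(B),M]=0 .
\]
That is, $\phi(B)\subseteq\operatorname{Ann}_P(M)$. I would use the Lie nilpotency of $P$, following the argument of Lemma~\ref{minimalideal-Annlem}, where $N$ only needs to be Lie nilpotent for the bracket part.
\begin{itemize}
\item \emph{Bracket.} By Proposition~\ref{idealpro2}, $[M,P]$ is an ideal of $P$ contained in $M$. Minimality and Lie nilpotency of $P$ force $[M,P]=0$, so $M\subseteq\operatorname{Ann}_L(P)$ and in particular $[\phi(B),M]=0$.
\item \emph{Product.} The subspace $M\cdot\phi(B)$ lies in $M$. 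It is stable under $P$: we have $(M\cdot\phi(B))\cdot M\subseteq M\cdot M=0$, and multiplying by $B$ stays inside $M\cdot\phi(B)$. For the bracket, $[M\cdot\phi(B),P]\subseteq M\cdot[\phi(B),P]+[\phi(B),M\cdot P]\subseteq M\cdot\phi(B)+[\phi(B),M]$, and $[\phi(B),M]=0$ since $M\subseteq\operatorname{Ann}_L(P)$. Since $M\cdot\phi(B)$ is an ideal of $P$ inside the minimal ideal $M$, either $M\cdot\phi(B)=0$ or $M\cdot\phi(B)=M$.
\item \emph{Ruling out $M\cdot\phi(B)=M$.} By Theorem~\ref{Frattiniidealthm} applied to $B$, the ideal $\phi(B)$ is associative nilpotent. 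Iterating $M=M\cdot\phi(B)$ gives $M=M\cdot\phi(B)^{k}$ for all $k$, hence $M=0$, a contradiction.
\end{itemize}
So $\phi(B)\subseteq\operatorname{Ann}_P(M)$, and $\phi(B)$ is an ideal of $P$.

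To get $\phi(B)\subseteq F(P)$, I would apply Lemma~\ref{Frattiniideallem1} with $C=B$, $A=P$ and the ideal $\phi(B)$ of $P$, which lies in $F(B)$. This gives $\phi(B)\subseteq F(P)$. Being an ideal of $P$, $\phi(B)$ is then contained in $\phi(P)=0$, as required.

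The delicate point is the product part of the annihilator step. One has to check that $M\cdot\phi(B)$ is stable under multiplication by all of $P$, including by $M$, and that the bracket computation really uses $[M,P]=0$. The nilpotence of $\phi(B)$ from Theorem~\ref{Frattiniidealthm} is what then rules out the case $M\cdot\phi(B)=M$.
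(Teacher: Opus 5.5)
Your proof is correct and has the same structure as the paper's. Both take the complement $B$ from Lemma~\ref{freelem3}, use minimality of $M$ to force $M\cdot\phi(B)+[M,\phi(B)]=0$ so that $\phi(B)$ becomes an ideal of $P$, and finish with Lemma~\ref{Frattiniideallem1}. The one difference is in the annihilation step. You treat the two parts separately: $[M,P]=0$ follows from Lie nilpotency of $P$ as in Lemma~\ref{minimalideal-Annlem}, and $M\cdot\phi(B)=0$ follows from associative nilpotency of $\phi(B)$. The paper instead handles $K_1=M\cdot\phi(B)+[M,\phi(B)]$ in one go and rules out $K_1=M$ using nilpotency of $K=M+\phi(B)$, obtained via Theorem~\ref{Engelthm}. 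Your split avoids Engel's theorem and yields the stronger fact $M\subseteq\operatorname{Ann}_L(P)$, which also makes $[M\cdot\phi(B),P]\subseteq[M,P]=0$ immediate.
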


\begin{proof}
By Lemma~\ref{freelem3}, there exists a subalgebra $B$ such that $P = M \dot{+} B$. We only need to prove that $\phi(B)=0$. First consider the subalgebra $K := M + \phi(B)$ of $P$. By Theorem~\ref{Frattiniidealthm}, $\phi(B)$ is associative nilpotent. Moreover, since $M$ is abelian, we have
\begin{eqnarray*}
&&K_A^{k+1} \subseteq M\cdot \big(\phi(B)\big)_A^{k} + \big(\phi(B)\big)_A^{k+1}.
\end{eqnarray*}
Therefore, $K$ is also associative nilpotent. By Theorem~\ref{Engelthm}, we obtain that $K$ is nilpotent.\par
Next, we claim that $K_1 := M \cdot \phi(B) + [M, \phi(B)]\subseteq M$ is an abelian ideal. Indeed, we obtain
\begin{eqnarray*}
K_1 \cdot P &=& \big(M \cdot \phi(B) + [M, \phi(B)]\big) \cdot P \\
            &\subseteq& M \cdot \phi(B) + [M \cdot P, \phi(B)] + [M, \phi(B) \cdot P] \\
            &\subseteq& K_1 + [M, \phi(B) \cdot (B+M)]\\
            &\subseteq& K_1,
\end{eqnarray*}
and
\begin{eqnarray*}
[K_1, P] &=& [M \cdot \phi(B), P] + \big[[M, \phi(B)], P\big] \\
         &\subseteq& \phi(B) \cdot [M, P] + [M, \phi(B) \cdot P] + \big[[M, P], \phi(B)\big] + \big[M, [\phi(B), P]\big] \\
         &\subseteq& K_1+[M, \phi(B) \cdot (B+M)]+ \big[M, [\phi(B), B+M]\big]\\
         &\subseteq& K_1.
\end{eqnarray*}
Thus $K_1$ is an ideal of $P$. Hence, we obtain the claim. By the minimality of $M$, we must have $K_1 = 0$ or $K_1 = M$. If $K_1 = M$, we obtain $M = K_1 = M \cdot K + [M, K] \subseteq K^1$, since $M$ is abelian. Repeating this step, we obtain that $M \subseteq K^i$ for every positive integer $i$, which contradicts with the nilpotency of $K$. Therefore, we have $K_1 = 0$. Consequently, $\phi(B)$ is not only an ideal of $B$ but also of $P$.\par
By Lemma~\ref{Frattiniideallem1} (with $A = P$, $B = \phi(B)$, $C = B$), we obtain that $\phi(B) \subseteq \phi(P) = 0$. This completes the proof.
\end{proof}

\begin{cor}\label{freecor1}
 Let $(P,\cdot,[\cdot,\cdot])$ be a finite-dimensional Lie nilpotent transposed Poisson algebra with $\phi(P)=0$. Then there exists a subalgebra $A$ such that $P= \operatorname{Zsoc}(P)\dot{+} A$ with $\phi(A)=0$.
\end{cor}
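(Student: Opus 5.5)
The plan is to iterate Theorem~\ref{freethm3}, peeling off minimal abelian ideals one at a time. We induct on $\dim(P)$, or equivalently on the number of minimal abelian ideals needed to span $\operatorname{Zsoc}(P)$.

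If $\operatorname{Zsoc}(P)=0$, take $A=P$. Otherwise, pick a minimal abelian ideal $M_1\subseteq \operatorname{Zsoc}(P)$. Theorem~\ref{freethm3} gives a subalgebra $B_1$ with $P=M_1\dot{+}B_1$ and $\phi(B_1)=0$. Since $B_1$ is a subalgebra of a Lie nilpotent algebra, it is again Lie nilpotent and finite-dimensional, with $\phi(B_1)=0$ and $\dim B_1<\dim P$.

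The key step is to relate $\operatorname{Zsoc}(P)$ to $\operatorname{Zsoc}(B_1)$. Since $M_1\subseteq \operatorname{Zsoc}(P)$, the modular law gives
\[
\operatorname{Zsoc}(P)=M_1\dot{+}\big(\operatorname{Zsoc}(P)\cap B_1\big).
\]
So it suffices to write $\operatorname{Zsoc}(P)\cap B_1$ as a sum of minimal abelian ideals of $B_1$ (or as a direct summand of $\operatorname{Zsoc}(B_1)$) and then use a complement inside $B_1$.

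Rather than proving $\operatorname{Zsoc}(P)\cap B_1=\operatorname{Zsoc}(B_1)$ exactly, which may fail, I would instead choose a decomposition $\operatorname{Zsoc}(P)=M_1\oplus\cdots\oplus M_r$ into minimal abelian ideals of $P$. Such a decomposition exists by the standard argument that a sum of minimal ideals is a direct sum of some of them. I then proceed one summand at a time. At stage $i$ we have $P=M_1\dot{+}\cdots\dot{+}M_i\dot{+}B_i$ with $B_i$ a subalgebra satisfying $\phi(B_i)=0$. We need $M_{i+1}$ to become, after projecting along $M_1+\cdots+M_i$, a minimal abelian ideal of $B_i$.

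A cleaner route is to pass to the quotient $\bar P=P/M_1\cong B_1$. The images of $M_2,\dots,M_r$ are abelian ideals of $\bar P$, and each $\bar M_j\cong M_j$ as a $P$-module, hence minimal as an ideal of $\bar P\cong B_1$. This minimality holds because the ideals of $B_1$ inside $\bar M_j$ are exactly the $P$-ideals there, since $M_1$ acts trivially modulo $M_1$.

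Thus the isomorphic copies $M_j'\subseteq B_1$ are minimal abelian ideals of $B_1$ with $\bigoplus_{j\ge2}M_j'\cong \operatorname{Zsoc}(P)/M_1$. We also have $M_1+\bigoplus_{j\ge 2}M_j'=\operatorname{Zsoc}(P)$, because $M_j'\subseteq M_j+M_1$ via the projection along $M_1$, and dimensions match.

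By induction applied to $B_1$, but with the specific family $\{M_j'\}$ in place of all of $\operatorname{Zsoc}(B_1)$, we get a subalgebra $A\subseteq B_1$ with $B_1=\bigoplus_{j\ge2}M_j'\dot{+}A$ and $\phi(A)=0$. To make this work, I would formulate the inductive statement for an arbitrary direct sum of minimal abelian ideals rather than only for $\operatorname{Zsoc}$; Theorem~\ref{freethm3} supplies each single step. Then
\[
P=M_1\dot{+}B_1=\operatorname{Zsoc}(P)\dot{+}A .
\]

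The main obstacle is the transfer step: checking that the projected copies $M_j'$ of the remaining minimal abelian ideals are genuinely minimal abelian ideals of $B_1$, and that together with $M_1$ they still span $\operatorname{Zsoc}(P)$. Here one must use that $M_1\cdot M_j+[M_1,M_j]\subseteq M_1\cap M_j=0$, so $M_1$ acts trivially and $P$-ideals contained in $M_j$ correspond to $B_1$-ideals contained in $M_j'$.
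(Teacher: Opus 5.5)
Your proof is correct and follows the paper's approach: both iterate Theorem~\ref{freethm3}, removing one minimal abelian ideal at a time and passing to the complement $B_1\cong P/M_1$. The paper phrases this as a chain $A_1\subsetneqq A_2\subsetneqq\cdots\subsetneqq\operatorname{Zsoc}(P)$ with $A_{i+1}/A_i$ minimal in $P/A_i$ and $\phi(P/A_i)=0$. Your strengthened induction over an arbitrary direct sum of minimal abelian ideals, together with your check that $M_j'=(M_1+M_j)\cap B_1$ are minimal abelian ideals of $B_1$, makes the paper's ``repeating this process'' step explicit.
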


\begin{proof}
If $\operatorname{Zsoc}(P)=0$, then we may take $A = P$. Then the conclusion holds trivially.\par
If $\operatorname{Zsoc}(P) \neq 0$, choose a minimal ideal $A_1$ of $P$ contained in $\operatorname{Zsoc}(P)$. Since $\operatorname{Zsoc}(P)$ is abelian, $A_1$ is a minimal abelian ideal of $P$. By Theorem~\ref{freethm3}, there exists a subalgebra $B_1$ of $P$ such that $P = A_1 \dot{+} B_1$ and $\phi(B_1)=0=\phi(P/A_1)$. If $A_1 = \operatorname{Zsoc}(P)$, the conclusion follows directly. Otherwise, we can choose an ideal $A_2$ of $P$ that is contained in $\operatorname{Zsoc}(P)$ and contains $A_1$ such that $A_2/A_1$ is a minimal ideal of $P/A_1$. Then $\phi\big((P/A_1)/(A_2/A_1)\big) = \phi(P/A_2) = 0$. Repeating this process, we obtain a sequence $A_1 \subsetneqq A_2 \subsetneqq \dots \subsetneqq \operatorname{Zsoc}(P)$ with $\phi(P/A_i) = 0$ at each step, until $A_i = \operatorname{Zsoc}(P)$ and $\phi(P/A_i)=0$. This completes the proof.
\end{proof}

\begin{cor}\label{freecor2}
 Let $(P,\cdot,[\cdot,\cdot])$ be a finite-dimensional Lie nilpotent transposed Poisson algebra with $\phi(P)=0$. Then there exists a subalgebra $U$ such that $P= Nil_A(P)\dot{+} U$ with $[U,U]=0$.
\end{cor}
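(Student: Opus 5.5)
The plan is to take $U$ to be the complement produced by Corollary~\ref{freecor1}, after identifying $\operatorname{Zsoc}(P)$ with $Nil_A(P)$. Commutativity of $U$ will then come for free, because the derived Lie algebra lies inside the nilpotent radical.

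First, since $\phi(P)=0$, Theorem~\ref{freethm2} gives $\operatorname{Zsoc}(P)=Nil(P)$. Since $P$ is finite-dimensional and Lie nilpotent, Corollary~\ref{N=N_A} gives $Nil(P)=Nil_A(P)$. Hence
$$\operatorname{Zsoc}(P)=Nil(P)=Nil_A(P).$$

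Next, Corollary~\ref{freecor1} applies under exactly our hypotheses. It provides a subalgebra $A$ with $P=\operatorname{Zsoc}(P)\dot{+}A$ and $\phi(A)=0$. Set $U:=A$. Then $P=Nil_A(P)\dot{+}U$, so in particular $U\cap Nil_A(P)=0$.

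It remains to show $[U,U]=0$. By Theorem~\ref{Lienil-assonil}, $[P,P]$ is a nilpotent ideal of $P$, so $[P,P]\subseteq Nil(P)=Nil_A(P)$. Since $U$ is a subalgebra, $[U,U]\subseteq U$. Therefore
$$[U,U]\subseteq U\cap [P,P]\subseteq U\cap Nil_A(P)=0.$$

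I do not expect a real obstacle here. The only care needed is to check that Corollary~\ref{freecor1} and Theorem~\ref{freethm2} are applied under the same hypotheses as the statement: finite-dimensional, Lie nilpotent, and $\phi(P)=0$. One should also confirm that the complement in Corollary~\ref{freecor1} is a genuine subalgebra, not merely a subspace, since that is what gives $[U,U]\subseteq U$. Both points hold as stated.
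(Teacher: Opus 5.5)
Your proposal is correct and matches the paper's own proof: you identify $\operatorname{Zsoc}(P)=Nil(P)=Nil_A(P)$ via Theorem~\ref{freethm2} and Corollary~\ref{N=N_A}, take the complementary subalgebra from Corollary~\ref{freecor1}, and use $[P,P]\subseteq Nil(P)$ from Theorem~\ref{Lienil-assonil} to get $[U,U]\subseteq U\cap Nil_A(P)=0$. As a minor remark, only the existence of a complementary subalgebra is needed here. That already follows from Lemma~\ref{freelem3}, since $\operatorname{Zsoc}(P)$ is abelian and meets $\phi(P)=0$ trivially, so the extra property $\phi(U)=0$ plays no role.
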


\begin{proof}
By Theorems~\ref{Lienil-assonil}, \ref{freethm2} and Corollaries~\ref{N=N_A}, \ref{freecor1}, there exists a subalgebra $U$ such that $P= Nil_A(P)\dot{+} U$. Moreover, we have $[P,P]\subseteq Nil(P)=Nil_A(P)$. Therefore, we obtain $[U,U]\subseteq U\cap Nil_A(P)=U\cap \operatorname{Zsoc}(P)=0$. This completes the proof.
\end{proof}

\delete{\begin{rmk}
For a Lie nilpotent transposed Poisson algebra $(P,\cdot,[\cdot,\cdot])$, by Lemma~\ref{minimalideal-Annlem} and following the proof of Theorem 4.3 in~\cite{Frattini of dialgebras}, we have $\phi(P)=0$ if and only if $P = \operatorname{Zsoc}(P) \oplus C$ for some subalgebra $C$ of $P$.
\end{rmk}}

\par
\noindent {\bf Acknowledgments.}
This research is supported by Zhejiang
Provincial Natural Science Foundation of China (No. Z25A010006) and
Natural Science Foundation of China (No. 12171129).

\smallskip

\noindent
{\bf Declaration of interests. } The authors have no conflicts of interest to disclose.

\smallskip

\noindent
{\bf Data availability. } No new data were created or analyzed in this study.

\UseRawInputEncoding

\end{document}